\newtheorem{thm}{Theorem}
\newtheorem{conj}{Conjecture}
\newtheorem{cor}{Corollary}[section]
\newtheorem*{cor*}{Corollary}
\newtheorem{prop}{Proposition}[section]
\newtheorem{lem}{Lemma}
\newtheorem{rem}{Remark}
\theoremstyle{definition}
\DeclareMathOperator{\Gram}{Gram}
\DeclareMathOperator{\intt}{int}
\DeclareMathOperator{\ext}{ext}
\definecolor{mygreen}{rgb}{0,0.6,0}
\definecolor{mygray}{rgb}{0.898,0.898,0.898}
\newcommand\opac{.1}
\definecolor{mylgray}{rgb}{0.93,0.93,0.93}
\definecolor{mydgray}{rgb}{0.8,0.8,0.8}
\definecolor{mymauve}{rgb}{0.58,0,0.82}
\newcommand{\LL}{\mathcal{L}}
\newcommand{\rone}{\mathbb{R}}
\newcommand{\rth}{\mathbb{R}^3}
\newcommand{\rd}{\mathbb{R}^d}
\newcommand{\wrd}{\widehat{\mathbb{R}^d}}
\newcommand{\ed}{\mathbb{R}^{d+1,1}}
\newcommand{\stw}{\mathbb{S}^2}
\newcommand{\sd}{\mathbb{S}^d}
\newcommand{\NE}{(1,1)}
\newcommand{\NW}{(-1,1)}
\newcommand{\SW}{(-1,-1)}
\newcommand{\SE}{(1,-1)}
\begin{document}

\title{Ball packings for links} 
\author{Jorge L. Ram\'irez Alfons\'in}
\address{UMI2924 J.-C. Yoccoz, CNRS-IMPA, Brazil and IMAG, Univ.\ Montpellier, CNRS, Montpellier, France}
\email{jorge.ramirez-alfonsin@umontpellier.fr}
\author{Ivan Rasskin}
\address{IMAG, Univ.\ Montpellier, CNRS, Montpellier, France}
\email{ivan.rasskin@umontpellier.fr}

\subjclass[MSC 2010]{52C17, 57K10}
\keywords{Ball packings, Links, Lorentzian space}

\begin{abstract} 

The \textit{ball number} of a link $L$, denoted by $\mathsf{ball}(L)$, is the minimum number of solid balls (not necessarily of the same size) needed to realize a necklace representing $L$. In this paper, we show that $\mathsf{ball}(L)\leq 5 \mathsf{cr}(L)$  where $\mathsf{cr}(L)$ denotes the \textit{crossing number} of $L$. To this end, we use the connection of the Lorentz geometry with the ball packings. The well-known Koebe-Andreev-Thurston circle packing Theorem is also an important brick for the proof.
Our approach yields to an algorithm to construct explicitly the desired necklace representation of $L$ in $\rth$.
\end{abstract}

\maketitle

\section{Introduction}


A {\em link} with $n$ components consists of $n$ disjoint simple closed curves in $\mathbb{R}^3$. A {\em knot} $K$ is a link with one component (we refer the reader to \cite{Adam} for standard background on knot theory).  A {\em link diagram} of a link $L$ is a regular projection of $L$ into $\mathbb{R}^2$ in such a way that the projection of each component is smooth and at most two curves intersect at any point. At each crossing point of the link diagram the curve which goes over the other is specified, see Figure \ref{fig1}. The \textit{crossing number} of a $L$, denoted by $\mathsf{cr}(L)$, is the minimum number on crossings among all the diagrams of links which are ambient isotopic to $L$.

\begin{figure}[H]
\centering
\includegraphics[width=.4\linewidth]{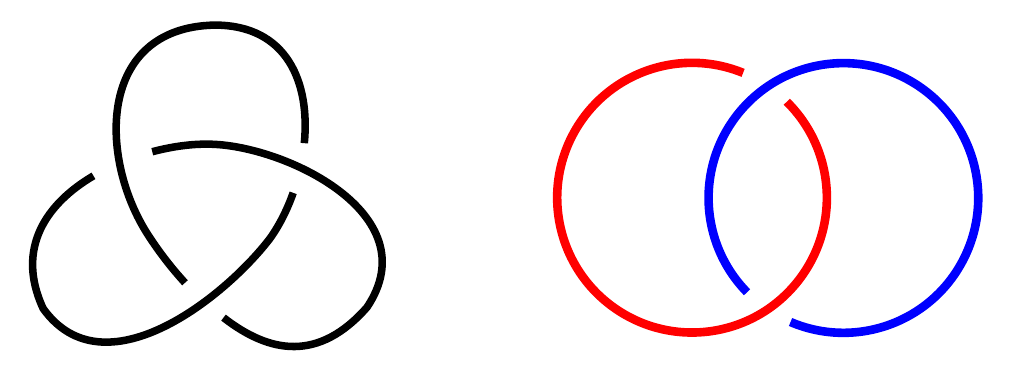}
\caption{(Left) A knot diagram of the \textit{trefoil} (denoted by $3_1$): the simplest non-trivial knot. (Right) A link diagram of the \textit{Hopf link} (denoted by $2_1^2$) : the simplest non-trivial link.}
\label{fig1}
\end{figure}

 A \textit{chain of balls}  is a sequence of non-overlapping solid balls in the space where all the consecutive balls are tangent.  The \textit{thread} of a chain of balls is the polygonal curve formed by joining the centers of consecutive tangent balls with straight segments. A chain of balls is\textit{ closed} if the last ball is tangent to the first ball. The thread of a closed chain can be thought of as a  polygonal knot in the space. A \textit{necklace representation} of a link $L$ is a collection of non-overlapping chains of balls such that theirs threads form a polygonal link ambient isotopic to $L$. 

\begin{figure}[H]
  \centering
    \includegraphics[width=0.37\textwidth]{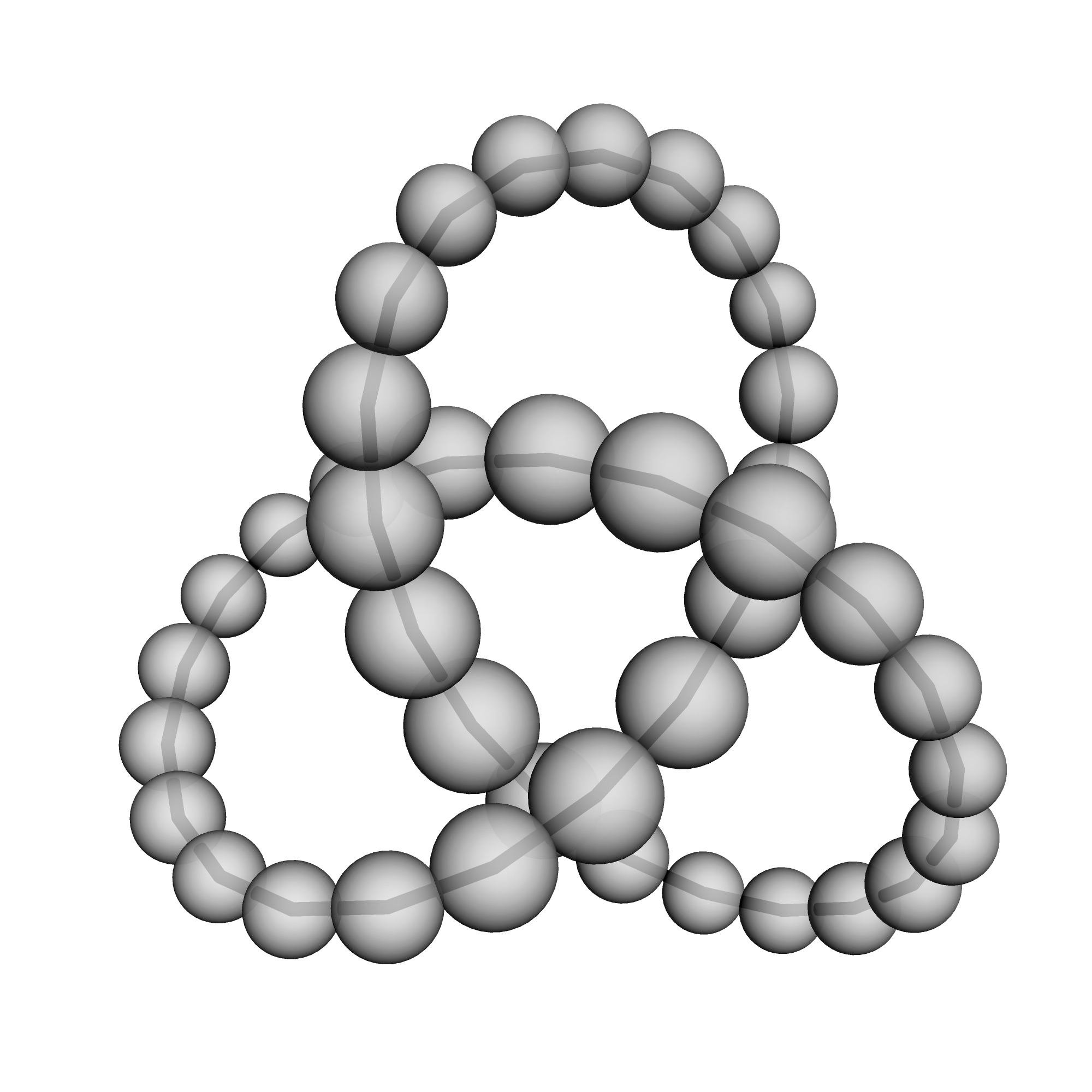}
    \caption{A necklace representation of the trefoil.}
  \label{necklace31}
\end{figure}

In \cite{mae2}, Maehara defined the \textit{ball number} of a link $L$, denoted by $\mathsf{ball}(L)$, as the minimum number of balls (not necessarily of the same size) needed to construct a necklace representation of $L$.
Little is known about the behavior of $\mathsf{ball}(L)$.  Maehara proved that $9\leq \mathsf{ball}(3_1)$ in \cite[Theorem 9]{mae2}. Some years earlier, Maehara and Oshiro showed that $\mathsf{ball}(3_1)\leq 12$ in \cite[Theorem 5]{mae3} and that $\mathsf{ball}(2_1^2)=8$ in \cite[Theorem 2]{mae1}. As far as we are aware, these are the only known results concerning ball numbers of links.\\

Necklace representations can be regarded as a particular case of polygonal representations of links with a strong geometric condition. Polygonal representations of links have been of great interest not only in mathematics but also in chemistry and physics. Indeed, polygonal representations have been applied to the study of the DNA and knotted molecules \cite{elrifai}.\\

In this paper we present the following upper bound to $\mathsf{ball}(L)$ in terms of the crossing number.

\begin{thm}\label{theorem} Let $L$ be a link. Then, 
$$\mathsf{ball}(L)\le 5\mathsf{cr}(L).$$
\end{thm}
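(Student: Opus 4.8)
The plan is to start from a minimal-crossing diagram $D$ of $L$ with $\mathsf{cr}(L) = c$ crossings, regard it as a $4$-regular plane graph $G$ (vertices = crossings, edges = arcs between consecutive crossings), and realize $G$ geometrically via the Koebe--Andreev--Thurston circle packing theorem. The cleanest route is to apply the packing theorem to a triangulation refining $G$ (or to $G$ together with its dual, using the fact that $G$ is $3$-connected after suitable reductions, or by adding a vertex in each face), obtaining a primal-dual circle packing: a family of circles, one per vertex of $G$, with tangencies along the edges, together with circles for the faces, each face-circle orthogonal to the vertex-circles around it. Passing to the Lorentzian model, each circle corresponds to a point on the de Sitter ``unit sphere'' in $\mathbb{R}^{d+1,1}$ with $d=2$, and tangency/orthogonality become explicit inner-product conditions; this is the ``connection of Lorentz geometry with ball packings'' the abstract refers to, and it lets us lift the planar circle configuration to a configuration of genuinely round balls in $\rth = \partial \hdd$ whose tangency pattern we control.

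Next I would turn each vertex-circle into a small cluster of balls that encodes the over/under information at that crossing. The key local gadget: at each crossing we need two strands to pass through transversally in $\rth$, one over the other, and we must route the necklace so consecutive gadgets connect up along the edges of $G$. I would build a fixed ``crossing gadget'' made of a bounded number of balls — the target is $5$ balls amortized per crossing, so the gadget plus its share of the connecting arcs must total at most $5$ per vertex of $G$. Concretely, I expect to use the circle packing to place, around each crossing, a controlled arrangement where one strand uses the plane of the packing and the other is pushed slightly out of the plane (into $\hdd$ or toward the other boundary hemisphere), with the orthogonal face-circles providing the ``room'' guaranteeing non-overlap. Along each edge of $G$ the two strands of the necklace run close to the tangency point of two vertex-circles, and because each edge has two endpoints, each edge contributes its balls split between the two incident gadgets; a Euler-characteristic / handshake count ($\sum \deg = 2|E|$, $|E| = 2c$ since $G$ is $4$-regular) is what converts a ``few balls per crossing and per arc'' statement into the global bound $5c$.

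The steps in order: (1) reduce to a minimal diagram and its $4$-valent plane graph $G$; (2) invoke Koebe--Andreev--Thurston to get a primal-dual circle packing realizing $G$, and record the orthogonality data; (3) translate to the Lorentzian/de Sitter picture and lift circles to balls in $\rth$ with the inherited tangency-and-orthogonality structure; (4) design the universal crossing gadget (a fixed small chain of balls realizing an over/under transversal passage), verifying it embeds inside the ``protected region'' cut out by the orthogonal face-circles so distinct gadgets and distinct edge-arcs never overlap; (5) connect gadgets along edges of $G$ with short subchains, respecting the closed-chain (necklace) condition and checking that the resulting polygonal link is ambient isotopic to $L$; (6) count balls, using $4$-regularity to get exactly $5c$. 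The main obstacle I anticipate is step (4)–(5): making the crossing gadget both \emph{universal} (same constant number of balls regardless of local combinatorics, and in particular independent of the degrees of neighboring faces, which circle packing does \emph{not} bound) and \emph{compatible} with arbitrary routing of the four strand-ends into the four incident edges, all while keeping the ball count down to $5$ per crossing and rigorously certifying non-overlap from the orthogonality of the face-circles rather than from ad hoc estimates. Getting the constant down to $5$ rather than something larger is where the geometric input from the Lorentzian model — which gives exact, not approximate, tangency relations — has to be used most carefully.
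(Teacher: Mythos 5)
Your overall skeleton matches the paper's: take a minimal diagram, build an auxiliary simple planar graph refining the $4$-regular diagram graph, apply the KAT theorem to realize it by a disk packing, lift to balls in $\rth$ via the Lorentzian/inversive-coordinate formalism, insert a local over/under gadget at each crossing, and count to get $5\mathsf{cr}(L)$. (The paper's auxiliary graph is the \emph{pyramidal patchwork} $G_\LL\cup\underline{med}(G_\LL)$ --- the diagram graph together with its simplified medial graph, one new vertex per arc --- rather than a primal--dual or triangulated refinement; this yields $3n$ balls from the packing plus $2$ bridge balls per crossing, hence $5n$ in total.) However, your proposal stops exactly where the real work begins, and you say so yourself: steps (4)--(5), the ``universal crossing gadget'' and the certification of non-overlap, are announced as the main obstacle but never carried out. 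That is not a peripheral detail to be filled in; it is essentially the entire content of Sections \ref{sec:packings} and \ref{sec:proof}, and without it there is neither a proof nor a justification of the constant $5$.

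Two concrete points where the plan, as stated, would run into trouble. First, the local configuration at a crossing (a disk together with its four neighbours, tangency graph $\boxtimes$) is \emph{not} M\"obius rigid: the paper proves $\mathcal{M}^2(\boxtimes)\simeq\rone$, so there is a one-parameter family of M\"obius-inequivalent local pictures, and a ``fixed'' gadget verified in one of them proves nothing about the others. The paper handles this by parametrizing the family by a standard curvature $\kappa\in(1,2]$, constructing the two bridge balls explicitly in polyspherical coordinates, and checking the relevant inversive-product inequalities (e.g.\ $\langle b_{-f},b_{\epsilon3}\rangle=3-4/\sqrt{\kappa}-8/\kappa<-1$) uniformly in $\kappa$; it also observes that a \emph{single} bridge ball would be too large to fit, so the two balls per crossing are forced and the constant $5$ is not merely an accounting convention. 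Second, non-overlap of gadgets at \emph{different} crossings does not follow from orthogonality data of a primal--dual packing alone: the paper introduces the tangency disk through the four tangency points around each crossing, proves its boundary is covered by the four neighbour disks (Lemma \ref{lem:relations}\ref{lem:relations;tangencydisk}), confines the bridge balls to the resulting crossing region (Lemma \ref{region}), and then runs a five-case analysis according to how many disks two pyramidal packings share ($0,1,2,3,4$) to show that distinct crossing regions have disjoint interiors. None of this is present in, or replaceable by, the handshake count in your proposal.
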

\bigskip

Our approach allows to come up with an algorithm to construct explicitly the necklace representing the link $L$. We are able to compute the coordinates of the centers and the radius of each of the balls of the desired necklace.
\bigskip

Unfortunately, our technique does not allow us to push further the above upper bound. However, we believe that it can be improved.

\begin{conj}Let $L$ be a link. Then,
$$\mathsf{ball}(L)\le 4\mathsf{cr}(L).$$
Moreover, the equality holds if $L$ is alternating.
\end{conj}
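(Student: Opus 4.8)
The plan is to realize the link diagram as the tangency graph of a circle packing, then lift that planar picture into $\rth$ using the correspondence between circles in $\widehat{\mathbb{R}^2}$ and points (or balls) governed by the Lorentzian quadratic form. Concretely, given a reduced diagram $D$ of $L$ with $\mathsf{cr}(L)$ crossings, I would first replace each crossing by a small tangency gadget so as to obtain a $4$-regular planar graph whose medial structure encodes the over/under information, and then invoke the Koebe--Andreev--Thurston Theorem to produce a circle packing whose tangency pattern matches this decorated graph. The key point is that each edge of $D$ gets subdivided into a bounded number of arcs, and each arc will be realized by a single ball in the final necklace; the constant $5$ should come from accounting, per crossing, for the balls needed locally (four ``corner'' balls around the crossing plus one to route a strand over the top), so that the total is at most $5\mathsf{cr}(L)$.

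The main technical bridge is the following: a chain of tangent balls in $\rth$ projects, via inverse stereographic/Lorentzian machinery, to a chain of tangent circles in $\widehat{\mathbb{R}^2}$, and conversely a suitable circle packing can be ``inflated'' to a ball packing whose threads form the desired polygonal link. So the steps, in order, would be: (1) normalize $D$ and build the decorated $4$-regular planar graph $G_D$; (2) apply Koebe--Andreev--Thurston to get a circle packing $\mathcal{P}$ realizing $G_D$, being careful that the combinatorics forces the threads of the resulting chains to trace out $D$ with the correct crossings; (3) use the Lorentzian model to lift $\mathcal{P}$ to a genuine ball packing in $\rth$, choosing the lift (i.e., the vertical ordering of the balls near each crossing) so that the over-strand passes above the under-strand; (4) verify the threads form a polygonal link ambient isotopic to $L$ and count the balls. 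The explicit nature of the circle packing coordinates and the Lorentzian formulas is what yields the stated algorithm producing centers and radii.

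The hardest step, I expect, is step (3): turning a flat circle packing into a ball packing whose threads actually realize the crossings without the balls overlapping. In the plane the ``crossing'' is only combinatorial, and one must genuinely push one strand off the plane; the Lorentzian description lets one parametrize families of balls tangent to a given configuration, but ensuring simultaneously (a) no two balls from disjoint strands overlap, (b) the polygonal thread is isotopic to the original over/under crossing, and (c) the number of extra balls stays within the crossing budget, is delicate. A secondary obstacle is controlling the ball count tightly enough to get the constant $5$ rather than something larger: one must argue that edges of $D$ far from crossings can be spanned by a single large ball, and that the local gadget at each crossing never needs more than five balls total when amortized.

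Finally, to address the second part of the conjecture --- equality for alternating links --- one would complement the construction with a matching lower bound $\mathsf{ball}(L) \ge 4\mathsf{cr}(L)$ valid for alternating $L$. Here the plan would be to adapt Maehara's sphere-counting arguments: near any crossing of a necklace-realized link the two strands must pass each other, and a packing/volume or separation argument should show at least four balls are ``charged'' to each crossing of a minimal alternating diagram, using that alternating diagrams are crossing-minimal (a deep fact, but available) so the charging is globally consistent. I expect the upper-bound half (the refinement from $5$ to $4$) to require a more economical crossing gadget --- merging the ``over-routing'' ball with one of the corner balls --- and the lower-bound half to be the genuinely new difficulty, since existing techniques only handle $3_1$ and $2_1^2$.
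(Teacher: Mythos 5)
You have reconstructed, quite faithfully, the architecture of the paper's proof of its \emph{theorem} ($\mathsf{ball}(L)\le 5\,\mathsf{cr}(L)$): the pyramidal patchwork $G_\LL\cup\underline{med}(G_\LL)$ with $3n$ vertices, the KAT circle packing, the blowing-up to a ball packing, and two bridge balls per crossing chosen above or below the plane to realize the over/under data, for a total of $3n+2n=5n$. But the statement you were asked to prove is the paper's \emph{conjecture}, $\mathsf{ball}(L)\le 4\,\mathsf{cr}(L)$ with equality for alternating links, and your proposal does not prove it; it proves (in outline) the $5n$ bound and then candidly lists the two missing ingredients as things you ``expect'' or ``plan'' to do. Your own per-crossing accounting lands on five balls, not four. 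The paper itself explains why the obvious economization fails: a \emph{single} bridge ball tangent to both $b_c$ and $b_{-c}$ would be too large to fit inside the crossing region $\mathcal{R}$, which is exactly the containment that guarantees bridge balls from different crossings do not collide. So the ``more economical crossing gadget'' you invoke is not a routine refinement; it is the open problem, and merging the over-routing ball with a corner ball would destroy the disjointness argument (Claims (2) and (3) in the paper's proof) that makes the construction a genuine packing.

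The second half of the conjecture is in even worse shape. A lower bound $\mathsf{ball}(L)\ge 4\,\mathsf{cr}(L)$ for alternating links is not known for any infinite family; the only known lower bound of this kind is Maehara's $9\le\mathsf{ball}(3_1)$, and even that falls short of $4\cdot 3=12$. Your proposed charging argument (four balls charged to each crossing of a minimal alternating diagram) is a reasonable heuristic, but no separation or volume argument in the literature localizes balls to crossings in the required way, and nothing in your sketch indicates how to prevent a single large ball from being ``shared'' between several crossings. In short: your write-up is a correct summary of the proof of Theorem 1, but the conjecture remains exactly as open after your proposal as before it.
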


Given the connection of the ball number with the Koebe-Andreev-Thurston circle packing Theorem (KAT Theorem) a linear bound seems inevitable.
\bigskip

A close related invariant to the ball number is the {\em pearl number} of a link $L$ in which the balls  have to be of the same size. The latter seems closely connected to another geometric invariant, the {\em rope length} of $L$. There is a known quasilinear upper bound on rope length in terms of the crossing number and a linear upper bound is conjectured. In \cite{DiaoErnst}, a sequence of knots is given that their rope length grows linearly in the crossing number. It is natural to ask whether the latter can be refined to pearl necklaces with unequal pearls. 

\bigskip

The paper is self-contained and it is organized as follows. In the next section, we briefly introduce some basic notions on the space of $d$-balls. We then explain the connection of the Lorentz geometry with the space of $d$-balls. We also discuss some definitions and properties of the inverse product and the action of the M\"obius group on the space of $d$-balls.
\bigskip

In Section \ref{sec:packings}, after recalling classical background of ball packing theory we introduce and study some geometric properties of both \textit{pyramidal disk systems} and \textit{crossing ball systems}. These are two building blocks for our construction.
\bigskip

In Section \ref{sec:proof}, we prove our main result. Let us give a brief outline of the proof. By combining the projection of a link with its associated {\em medial graph} we construct a simple planar graph which contains a subgraph isotopic to the projection of the given link. By the KAT Theorem we can obtain a disk packing whose tangency graph is the previous simple planar graph. We then construct a ball packing with same tangency graph as the disk packing. We finally obtain a necklace representation of the link by adding two balls to the ball packing for each crossing of the projection. We use the Lorentz geometry and the building blocks defined in Section 3 in order to verify that our construction works properly. 
\bigskip

In Section \ref{sec:alg}, based on the approach used in the proof of Theorem \ref{theorem}, we will present an algorithm that outputs the coordinates of the centers and the radius of the balls forming the necklace representation of the given link $L$. The examples presented at the end of this paper have been done throughout an implementation of this  algorithm.
\newpage
\section{The space of $d$-balls.}\label{sec:ballspace}
\subsection{From spherical caps to $d$-balls}
 Some notations and definitions of this section can be found in the PhD thesis of Chen \cite{chentesis} and the paper of Wilker \textit{Inversive geometry} \cite{wilker}. Let $d\geq 1$ be an integer. We denote $\rd$ the Euclidean space of dimension $d$ and $\langle\cdot,\cdot\rangle_2$, $\|\cdot\|$ the Euclidean inner product and the Euclidean norm respectively.
Let $\sd$ be the unit $d$-sphere of $\mathbb{R}^{d+1}$ endowed with the induced metric $\|\cdot\|_\mathbb{S}$ from $\mathbb{R}^{d+1}$. A \textit{$d$-spherical cap} $\beta$ of center $\gamma\in\sd$ and spherical radius $\rho\in(0,2\pi)$ is the subset 
\begin{eqnarray}\label{scap}
\beta=\{x\in\sd\mid \|x-\gamma\|_\mathbb{S}\leq \rho\}
\end{eqnarray}
which gives a partition of $\sd$ in three disjoint subsets: the \textit{interior} of $\beta$,  $\intt(\beta)$, points of $\sd$ satisfying (\ref{scap}) strictly, the \textit{exterior} of $\beta$, $\ext(\beta)$, points of $\sd$ not satisfying (\ref{scap}) and the boundary of $\beta$, $\partial \beta$, points of $\sd$ satisfying the equality of (\ref{scap}).
Let $\mathsf{Caps}(\sd)$ denote the family of $d$-spherical caps. It is well known that $\sd$ is homeomorphic to $\wrd$ under the stereographic projection where $\wrd:=\rd\cup\{\infty\}$ is the one-point compactification of $\rd$. A \textit{$d$-ball} of $\wrd$ is the image of a $d$-spherical cap under the stereographic projection. We denote $\mathsf{Balls}(\wrd)$ the space of $d$-balls, isomorphic to $\mathsf{Caps}(\sd)$ given by the above construction. A $d$-ball $b$ is called \textit{solid ball, hollow ball} and \textit{half-space} depending on whether the pole of the stereographic projection lies in the exterior, interior or boundary of the corresponding $d$-spherical cap $\beta_b$. More precisely, a \textit{$d$-ball} of $\wrd$ of curvature $\kappa\in\rone$ will be one of the following subsets:
\begin{itemize}
\item[-]\textit{Solid ball}: $\{x\in \wrd\mid\|x-c\|\leq 1/\kappa\}$ when $\kappa>0$.\\
It is also a standard  $d$-ball of $\rd$ with center $c\in\rd$ and radius $\frac{1}{k}$.
\item[-]\textit{Hollow ball}:$\{x\in \wrd\mid\|x-c\|\geq-1/\kappa\}$ when $\kappa<0$.\\
It can be regarded as the closure of the exterior of a solid ball with its boundary. 
\item[-]\textit{Half-space}: $\{x\in \wrd\mid\langle x,n\rangle_2\leq \delta\}$ when $\kappa=0$.\\
By convention, we choose the \textit{normal vector} $n$ which points towards the interior. The real number $\delta$ represents the \textit{signed distance} from the boundary to the origin (positive if the origin is contained in the interior and negative otherwise).
\end{itemize}
There is a natural embedding of $\mathsf{Balls}(\wrd)\hookrightarrow\mathsf{Balls}(\widehat{\mathbb{R}^{d+1}})$  where a $d$-ball $b$ of center $c$ and curvature $\kappa$ (resp. normal vector $n$ and signed distance $\delta$) is mapped to a $(d+1)$-ball $\widehat{b}$ of center $(c,0)$ and curvature $\kappa$ (resp. normal vector $(n,0)$ and signed distance $\delta$). We call this mapping the \textit{blowing up.}

\subsection{The intersection angle of two $d$-balls}
For $d>1$, let $b$ and $b'$ be two $d$-balls with intersecting boundaries. We define the \textit{intersection angle} of $b$ and $b'$, denoted by $\measuredangle(b,b')\in [0,\pi]$, as the angle formed by the vectors $\overrightarrow{pc}$ and $\overrightarrow{pc}'$ where $c$ and $c'$ are the centers of $b$ and $b'$ and $p\in\partial b\cap \partial b'$, see Figure \ref{fig:ang}. The intersection angle does not depend on the choice of the point in the intersection.
\begin{figure}[H]
 \centering

\begin{tikzpicture}[scale=1.2]
\fill[darkgray,opacity=.1] (-.92,-.39) circle (1cm);
\draw (-.92,-.39) circle (1cm);
\node (c) at (-.92,-.39) [circle,fill=black,inner sep=0pt,minimum size=0.1cm] {};
\fill[darkgray,opacity=.1] (.58,-0.39) circle (.7cm);
\draw (.58,-0.39) circle (.7cm);
\node (c') at (.58,-0.39) [circle,fill=black,inner sep=0pt,minimum size=0.1cm] {};
\node (p) at (0,0) [circle,fill=black,inner sep=0pt,minimum size=0.1cm] {};
\draw[->,>=stealth] (p) -- (c);
\draw[->,>=stealth] (p) -- (c');
\draw [<->,>=stealth,thick,domain=-34.05:-156.93] plot ({.3*cos(\x)}, {.3*sin(\x)});

\end{tikzpicture}    
    
    \caption{The intersection angle of two disks.}
    \label{fig:ang}
\end{figure}
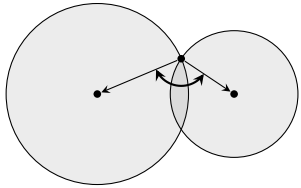
Two $d$-balls $b$ and $b'$ with intersecting boundaries are said to be:
\begin{itemize}
\item[-]\textit{Internally tangent} if $\measuredangle(b,b')=0$.
\item[-]\textit{Orthogonal} if $\measuredangle(b,b')=\frac{\pi}{2}$.
\item[-]\textit{Externally tangent} if $\measuredangle(b,b')=\pi$.

\end{itemize}

When the boundaries of $b$ and $b'$ do not intersect the intersection angle $\measuredangle(b,b')$ is not well-defined. In this case we say that $b$ and $b'$ are \textit{disjoint} if they have disjoint interiors and \textit{nested} if one is contained in the other.

\begin{rem}The blowing up operation preserves intersection angles.
\end{rem}

We notice that the definition of intersection angle does not work when $d=1$ since the boundary of a $1$-ball is not simply connected. In this case we can define the intersection angle of two $1$-balls as the intersection angle of the corresponding $2$-balls given by the blowing up.

\subsection{The hyperbolic model of $\mathsf{Balls}(\wrd)$}
Let $\mathbb{H}^{d+1}$ be the Poincaré ball model of the hyperbolic space of dimension $d+1$ embedded in  $\widehat{\mathbb{R}^{d+1}}$ as the standard unit $(d+1)$-ball. The boundary $\partial\mathbb{H}^{d+1}$ is exactly the unit sphere $\sd$. A  \textit{$d$-hyperbolic half-space} of $\mathbb{H}^{d+1}$ is the intersection 
$h:=\mathbb{H}^{d+1}\cap \widehat{b}_h$
where $ \widehat{b}_h$ is a $(d+1)$-ball orthogonal to $\mathbb{H}^{d+1}$. 
We denote $\mathsf{Halfs}(\mathbb{H}^{d+1})$ the space of hyperbolic half-spaces of $\mathbb{H}^{d+1}$. At the boundary of $\mathbb{H}^{d+1}$, the intersection $\partial \mathbb{H}^{d+1}\cap\widehat{b}_h=\sd\cap\widehat{b}_h$ is a $d$-spherical cap $\beta_h$ which corresponds to a $d$-ball $b_h$ by the stereographic projection. For any $d$-ball, the mapping $h\mapsto \beta_h\mapsto b_h$ can be reversed so we can define the following isomorphisms:
\begin{eqnarray}
\begin{tikzcd}
\mathsf{Balls}(\wrd)\arrow{r}{\simeq}& \mathsf{Caps}(\sd)\arrow{r}{\simeq}&\mathsf{Halfs}(\mathbb{H}^{d+1})
\end{tikzcd}
\label{eq:isos}
\end{eqnarray}
The notions of interior, exterior and boundary are easily extended for $d$-hyperbolic half-spaces. For $d>1$, two $d$-balls $b$ and $b'$ have intersecting boundaries if and only if the corresponding $d$-hyperbolic half-spaces  $h_b$ and $h_{b'}$ have intersecting boundaries. Moreover, the intersection angle of $b$ and $b'$ is equal to the \textit{dihedral angle} of $h_b$ and $h_{b'}$ measured at a  non-common region.

\subsection{The Lorentzian model of $\mathsf{Balls}(\wrd)$}
The Lorentzian space of dimension $d+2$, denoted by $\ed$, is a real vector space of dimension $d+2$ equipped with a bilinear symmetric form $\langle\cdot,\cdot\rangle$ of signature $(d+1,1)$. The \textit{Lorentzian product} of two vectors $u$ and $v$ of $\ed$ is the real number $\langle u,v\rangle$ and the \textit{Gramian} of a collection of vectors  $\mathcal{B}=\{v_1,\ldots,v_n\}$ of  $\ed$ is the matrix
\begin{eqnarray*}\label{eq:cartprod}
\Gram(\mathcal{B})&:=&
\begin{pmatrix}
\langle v_1,v_1\rangle  & \cdots & \langle v_1,v_n \rangle  \\
\vdots & \ddots & \vdots \\
\langle v_n,v_1 \rangle & \cdots & \langle v_n,v_n \rangle 
\end{pmatrix}
\end{eqnarray*}
If  $\mathcal{B}=\{v_1,\ldots,v_{d+2}\}$ is a basis of $\ed$ then $\Gram(\mathcal{B})$ is the matrix of the Lorentzian product in the basis $\mathcal{B}$. The Lorentzian product of two vectors $u,v\in\ed$ can be computed by 
\begin{eqnarray}\label{eq:carGram}
\langle u,v \rangle = C_{\mathcal{B}}(u)^T \Gram(\mathcal{B}) C_{\mathcal{B}}(v)
\end{eqnarray}
where $C_{\mathcal{B}}(u)$ is the column matrix of the coordinates of $u$ in the basis $\mathcal{B}$. Generalizing the definition of  Boyd in \cite{boyd},  we define the \textit{polyspherical coordinates} of a vector $u\in \ed$ with respect to $\mathcal{B}$ as the column matrix $P_{\mathcal{B}}(u)=
\begin{pmatrix}
\langle v_1,u\rangle   &
\cdots &
\langle v_{d+2},u\rangle
\end{pmatrix}^T$ which is related to the Cartesian coordinates by
\begin{eqnarray}\label{eq:carpol}
C_{\mathcal{B}}(u)= \Gram(\mathcal{B})^{-1} P_{\mathcal{B}}(u)
\end{eqnarray}
Combining equations (\ref{eq:carGram}) and (\ref{eq:carpol}) we can compute the Lorentzian product in polyspherical coordinates by
\begin{eqnarray}\label{eq:polyprod}
\langle u,v\rangle = P_{\mathcal{B}}(u)^T \Gram(\mathcal{B})^{-1} P_{\mathcal{B}}(v)
\end{eqnarray}
In practice we will use equation (\ref{eq:polyprod}) to compute the Lorentzian product in different basis.

\noindent
From now on, we fix an orthonormal basis $\mathcal{B}_0=\{e_1,\ldots,e_{d+2}\}$ with Gramian $\mathsf{diag}(1,\ldots,1,-1)$.
A vector $v\in\ed$ is called:
\begin{itemize}
\item[-]\textit{Space-like} (resp. \textit{time-like}) if $\langle v,v\rangle >0$ (resp. $<0$). 
\item[-]\textit{Future-directed} (resp. \textit{past-directed}) if $\langle e_{d+2},v\rangle>0$ (resp. $<0$).
\item[-]\textit{Normalized} if $|\langle v,v\rangle|=1$.
\end{itemize}
The space of all the normalized space-like (resp. time-like) vectors of $\ed$ is usually called  \textit{de Sitter space} (resp. \textit{anti de Sitter space}). We denote it by $\mathsf{S}(\ed)$ (resp. $\mathsf{T}(\ed)$). The \textit{anti de Sitter space} can be regarded as the generalization of a two-sheets hyperboloid with two connected components  $\mathsf{T}^\uparrow(\ed)$ and $\mathsf{T}^\downarrow(\ed)$ formed by the future-directed and the past-directed vectors of $\mathsf{T}(\ed)$ respectively. The \textit{hyperboloid model} of the $(d+1)$-hyperbolic space is obtained by taking $\mathsf{T}^\uparrow(\ed)$ with the metric induced by the restriction of the Lorentzian product  of $\ed$. The isomorphism which maps the hyperboloid model to the Poincar\'e ball model can be regarded as the projection $\Pi:\mathsf{T}^\uparrow(\ed)\rightarrow \{e_{d+2}=0\}$ from $-e_{d+2}$, see Figure \ref{eq:isos}. \\

A \textit{time-like half-space} is the subset $t_v=\{u\in\ed\mid\langle u,v\rangle\geq0\}$
where $v\in\mathsf{S}(\ed)$. The space of time-like half-spaces is in bijection with $\mathsf{S}(\ed)$.  The image $\Pi(t_v\cap\mathsf{T}^\uparrow(\ed))$ is a hyperbolic half-space of $\mathbb{H}^{d+1}$ and every hyperbolic half-space can be obtained in this way. We can then extend the isomorphisms of (\ref{eq:isos})  by
\begin{eqnarray}
\begin{tikzcd}
\mathsf{Balls}(\wrd)\arrow{r}{\simeq}& \mathsf{Caps}(\sd)\arrow{r}{\simeq}&\mathsf{Halfs}(\mathbb{H}^{d+1})\arrow{r}{\simeq}&\mathsf{S}(\ed)
\end{tikzcd}
\label{eq:isos2}
\end{eqnarray}

The \textit{Lorentzian vector} of a $d$-ball $b$, denoted $v_b$, is the normalized space-like vector obtained by the previous isomorphisms. 

\begin{figure}[H]
 \centering
\begin{tikzpicture}[scale=.97]
\clip (-6,-3.6) rectangle (6,4);
\node {\includegraphics[scale=1.3]{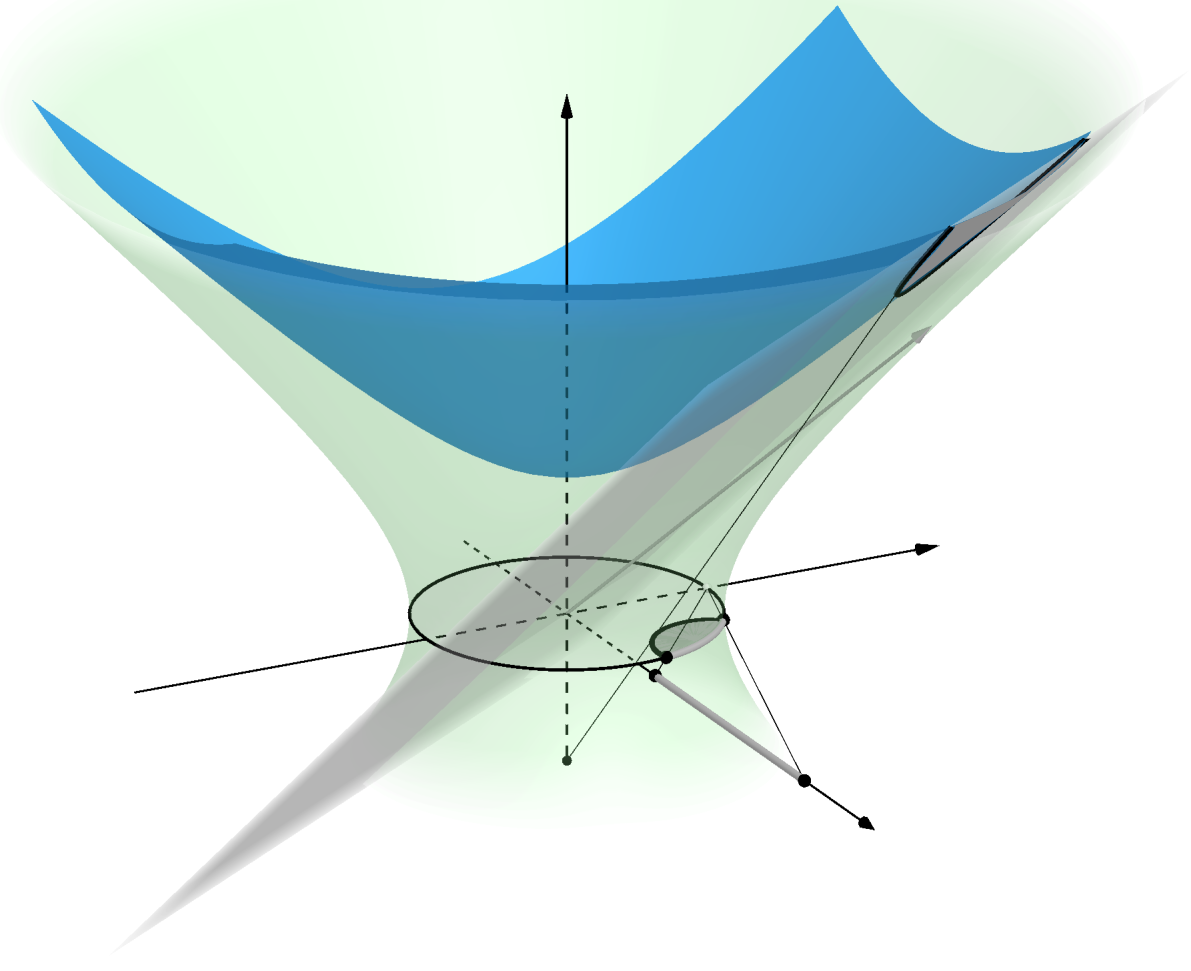}};
\node at (2.6,-3) {$\widehat{\mathbb{R}^d}$};
\node at (-1.7,3) {$\mathsf{S}(\mathbb{R}^{d+1,1})$};
\node at (-1.75,-1) {\footnotesize $\mathbb{S}^d$};
\node at (-1.1,-.95) {\scriptsize $\mathbb{H}^{d+1}$};
\node at (1,-2.2) {$b$};
\node at (3,1.2) {$v_b$};
\node at (1,-1.45) {\footnotesize $\beta_b$};
\node at (.7,-1.25) {\scriptsize  $h_b$};
\end{tikzpicture}

    \caption{Geometric interpretation of the isomorphisms of Equation (\ref{eq:isos2}).}
    \label{fig:isos}
\end{figure}
The \textit{inversive product} of two $d$-balls $b$ and $b'$,  denoted by  $\langle b,b'\rangle:=\langle v_b, v_b'\rangle$, is the Lorentzian product of their corresponding Lorentzian vectors. Equivalently, we define the Gramian of a collection of $d$-balls as the Gramian of the collection of the Lorentzian vectors of the $d$-balls. We denote
 $-b$ the $d$-ball corresponding to the Lorentzian vector $-v_b$ which is the $d$-ball with same boundary as $b$ and $\intt(-b)=\ext(b)$. We notice that  $\langle -b,b'\rangle=-\langle b,b'\rangle$. The inversive product is a fundamental tool to encode configurations of $d$-balls \cite{wilker}. Indeed,
\begin{eqnarray*}
\langle b,b'\rangle&=&
\left\lbrace\begin{array}{lll}
\cosh d_\mathbb{H}(h_{b},h_{b'})&\text{if $b$ and $b'$ are nested} \\
\cos \measuredangle (b,b')&\text{if $\partial b$ and $\partial b'$ intersect}\\
-\cosh d_\mathbb{H} (h_{b},h_{b'})&\text{if $b$ and $b'$ are disjoint} \\
\end{array}\right.
\end{eqnarray*}
where $h_b$ and $h_{b'}$  are the corresponding hyperbolic half-spaces and $d_\mathbb{H}(h_b,h_{b'})$ is the hyperbolic distance between $\partial h_b$ and $\partial h_{b'}$. In particular, we have
\begin{eqnarray}\label{eq:lprod}
\langle b,b'\rangle&=&
\left\lbrace\begin{array}{rll}
>1 \quad&\text{if  $b$ and $b'$ are nested} \\
1 \quad&\text{if $b$ and $b'$ are internally tangent}  \\
0 \quad&\text{if  $b$ and $b'$ are orthogonal}  \\
-1 \quad&\text{if  $b$ and $b'$ are externally tangent}  \\
<-1 \quad&\text{if  $b$ and $b'$ are disjoint} 
\end{array}\right.
\end{eqnarray}

In \cite{wilker}, Wilker defined the \textit{inversive coordinates} of a $d$-ball $b$ as the column-matrix of the Cartesian coordinates of $v_b$ with respect to $\mathcal{B}_0$. We overuse the notation $v_b$ to refer both the Lorentzian vector and the inversive coordinates of the $d$-ball $b$. The latter can be given in terms of the curvature $\kappa$ and center $c$ (normal vector $n$ and signed distance $\delta$ for half-spaces) by
\begin{eqnarray}
v_b&=&
\left\lbrace\begin{array}{lll}
\dfrac{\kappa}{2}(2c,\|c\|^2-\frac{1}{\kappa^2}-1,\|c\|^2-\frac{1}{\kappa^2}+1)^T&\text{ if }\kappa\not=0,  \\
\quad\\

(n,\delta,\delta)^T&\text{ if }\kappa=0. \\
\end{array}\right.
\end{eqnarray}
With the inversive coordinates one can compute the inversive product by
\begin{eqnarray}\label{eq:invproduct}
\langle b,b'\rangle=v_b^T Q\, v_{b'}
\end{eqnarray} 
where $Q=\mathsf{diag}(1,\cdots,1,-1)$ is the Gramian of $\mathcal{B}_0$.
\subsection{The Möbius group}
The Möbius Group $\mathsf{M\ddot{o}b}(\wrd)$ can be defined as the group of the continuous automorphisms of $\wrd$ mapping $d$-balls to $d$-balls \cite{thurston}. An element of the Möbius Group is called a \textit{Möbius transformation.}
For every solid or hollow (resp. half-space) $d$-ball $b$ we define the \textit{inversion on} $b$, denoted by $\sigma_b$, as the sphere inversion (resp. Euclidean reflection) on $\partial b$. Alternatively, $\sigma_b$ can be defined as the only Möbius transformation which maps $b$ to $-b$ and fixes a $d$-ball $b'$ if and only if $b'$ is orthogonal to $b$. It is well-known that $\mathsf{M\ddot{o}b}(\wrd)$ is generated by the inversions on $d$-balls \cite{thurston}. The product $\sigma_b\circ\sigma_{b'}$ of the inversions on two $d$-balls centered at the origin with non-zero curvatures $\kappa$ and $\kappa'$  gives a scaling of $\mathbb{R}^d$ of scaling factor $(\kappa'/\kappa)^2$. Thus, the group of Euclidean isometries and scalings of $\mathbb{R}^d$ is a subgroup of $\mathsf{M\ddot{o}b}(\wrd)$.\\

The Möbius Group defines a group action (on the left) on the space of $d$-balls. The following group isomorphisms and equivariant group actions can be obtained by using the isomorphisms given in (\ref{eq:isos2})
\begin{eqnarray}\label{isogroups}
\begin{tikzcd}[row sep=small]
\mathsf{Balls}(\wrd)\arrow[loop below]\arrow{r}{\simeq}& \mathsf{Caps}(\sd)\arrow[loop below]\arrow{r}{\simeq}&\mathsf{Halfs}(\mathbb{H}^{d+1})\arrow[loop below]\arrow{r}{\simeq}&\mathsf{S}(\ed)\arrow[loop below]\\
\mathsf{M\ddot{o}b}(\wrd)\arrow{r}{\simeq}& \mathsf{M\ddot{o}b}(\mathbb{S}^d)\arrow{r}{\simeq}&\mathsf{Isom}(\mathbb{H}^{d+1})\arrow{r}{\simeq}&\mathsf{O}^\uparrow(\ed)
\end{tikzcd}
\end{eqnarray}
where $\mathsf{M\ddot{o}b}(\sd)$ is the Möbius group defined on $\mathbb{S}^d$ acting on the family of $d$-spherical caps,
$\mathsf{Isom}(\mathbb{H}^{d+1})$ is the group of hyperbolic isometries acting on the space of hyperbolic half-spaces and $\mathsf{O}^\uparrow(\ed)$ is the \textit{Orthochronous Lorentz Group} which is the group of linear maps of $\ed$ preserving the Lorentz product and the time orientation. The latter acts on the space of normalized space-like vectors of $\ed$. Moreover, for any $b\in\mathsf{Balls}(\wrd)$, the isomorphism $\mathsf{M\ddot{o}b}(\wrd)\rightarrow\mathsf{O}^\uparrow(\ed)$ maps the inversion $\sigma_b$ to the Lorentzian reflection on the boundary of the time-like half-space $t_{v_b}$, which corresponds to the linear map
\begin{eqnarray}\label{eq:lorenref}
\sigma_{v_b}:u\mapsto u-2\langle u,v_b\rangle v_b.
\end{eqnarray}
Since the Orthochronous Lorentz Group preserves the Lorentz product, the Möbius Group preserves the inversive product of $d$-balls.

\section{$d$-ball packings}\label{sec:packings}
A collection of $d$-balls $\mathcal{B}=\{b_1,\ldots,b_n\}$ in $\wrd$ is called a \textit{$d$-ball packing} if every pair of $d$-balls $b_i,b_j\in\mathcal{B}$ are either
externally tangent or disjoint. The \textit{tangency graph} of a $d$-ball packing $\mathcal{B}$ is the simple graph $G=(V,E)$ where 
$V=\{1,\ldots,n\}$ and $E=\{ij\mid b_i\text{ and }b_j\text{ are externally tangent}\}$. A simple graph $G$ is said to be \textit{$d$-ball packable} if there is a $d$-ball packing $\mathcal{B}_G$ with tangency graph $G$. In this case $G$ can be embedded in $\wrd$ by taking the centers of the $d$-balls of $\mathcal{B}_G$ and the straight segments between the centers of any tangent pair. This embedding is usually called the \textit{carrier} of the $d$-ball-packing, see Figure \ref{fig:skeleton}. The Möbius Group preserves  tangency graphs and maps carriers to carriers \cite{stephenson}.\\

\begin{figure}[H]
 \centering
    \begin{tikzpicture}[scale=.45] 

\draw (-0.000567384,1.32022) circle (0.47988cm);
\draw (-0.514838,0.709638) circle (0.318417cm);
\draw (0.207697,0.28587) circle (0.206672cm);
\draw (-0.205456,0.285608) circle (0.206481cm);
\draw (0.516768,0.71127) circle (0.319151cm);
\draw (-1.2499,0.405167) circle (0.477205cm);
\draw (-4.12964,1.3203) circle (2.54445cm);
\draw (-2.52836,-3.48) circle (2.51588cm);
\draw (-0.770035,-1.05986) circle (0.475567cm);
\draw (-0.830163,-0.269342) circle (0.317237cm);
\draw (-0.0128939,4.39213) circle (2.59206cm);
\draw (4.18115,1.34498) circle (2.59206cm);
\draw (2.53181,-3.51952) circle (2.54445cm);
\draw (1.25578,0.407429) circle (0.47988cm);
\draw (0.834,-0.270349) circle (0.318417cm);
\draw (0.771576,-1.06352) circle (0.477205cm);
\draw (0.000375084,-0.872763) circle (0.317237cm);
\draw (-0.332571,-0.10698) circle (0.206174cm);
\draw (0.00102559,-0.349352) circle (0.206174cm);
\draw (0.335119,-0.107142) circle (0.206481cm);

\fill[darkgray,opacity=\opac] (-0.000567384,1.32022) circle (0.47988cm);
\fill[darkgray,opacity=\opac] (-0.514838,0.709638) circle (0.318417cm);
\fill[darkgray,opacity=\opac] (0.207697,0.28587) circle (0.206672cm);
\fill[darkgray,opacity=\opac] (-0.205456,0.285608) circle (0.206481cm);
\fill[darkgray,opacity=\opac] (0.516768,0.71127) circle (0.319151cm);
\fill[darkgray,opacity=\opac] (-1.2499,0.405167) circle (0.477205cm);
\fill[darkgray,opacity=\opac] (-4.12964,1.3203) circle (2.54445cm);
\fill[darkgray,opacity=\opac] (-2.52836,-3.48) circle (2.51588cm);
\fill[darkgray,opacity=\opac] (-0.770035,-1.05986) circle (0.475567cm);
\fill[darkgray,opacity=\opac] (-0.830163,-0.269342) circle (0.317237cm);
\fill[darkgray,opacity=\opac] (-0.0128939,4.39213) circle (2.59206cm);
\fill[darkgray,opacity=\opac] (4.18115,1.34498) circle (2.59206cm);
\fill[darkgray,opacity=\opac] (2.53181,-3.51952) circle (2.54445cm);
\fill[darkgray,opacity=\opac] (1.25578,0.407429) circle (0.47988cm);
\fill[darkgray,opacity=\opac] (0.834,-0.270349) circle (0.318417cm);
\fill[darkgray,opacity=\opac] (0.771576,-1.06352) circle (0.477205cm);
\fill[darkgray,opacity=\opac] (0.000375084,-0.872763) circle (0.317237cm);
\fill[darkgray,opacity=\opac] (-0.332571,-0.10698) circle (0.206174cm);
\fill[darkgray,opacity=\opac] (0.00102559,-0.349352) circle (0.206174cm);
\fill[darkgray,opacity=\opac] (0.335119,-0.107142) circle (0.206481cm);

\node at (-0.000567384,1.32022) [circle,fill=black,inner sep=0pt,minimum size=0.1cm] {};
\node at (-0.514838,0.709638) [circle,fill=black,inner sep=0pt,minimum size=0.1cm] {};
\node at (0.207697,0.28587) [circle,fill=black,inner sep=0pt,minimum size=0.1cm] {};
\node at (-0.205456,0.285608) [circle,fill=black,inner sep=0pt,minimum size=0.1cm] {};
\node at (0.516768,0.71127) [circle,fill=black,inner sep=0pt,minimum size=0.1cm] {};
\node at (-1.2499,0.405167) [circle,fill=black,inner sep=0pt,minimum size=0.1cm] {};
\node at (-4.12964,1.3203) [circle,fill=black,inner sep=0pt,minimum size=0.1cm] {};
\node at (-2.52836,-3.48) [circle,fill=black,inner sep=0pt,minimum size=0.1cm] {};
\node at (-0.770035,-1.05986) [circle,fill=black,inner sep=0pt,minimum size=0.1cm] {};
\node at (-0.830163,-0.269342) [circle,fill=black,inner sep=0pt,minimum size=0.1cm] {};
\node at (-0.0128939,4.39213) [circle,fill=black,inner sep=0pt,minimum size=0.1cm] {};
\node at (4.18115,1.34498) [circle,fill=black,inner sep=0pt,minimum size=0.1cm] {};
\node at (2.53181,-3.51952) [circle,fill=black,inner sep=0pt,minimum size=0.1cm] {};
\node at (1.25578,0.407429) [circle,fill=black,inner sep=0pt,minimum size=0.1cm] {};
\node at (0.834,-0.270349) [circle,fill=black,inner sep=0pt,minimum size=0.1cm] {};
\node at (0.771576,-1.06352) [circle,fill=black,inner sep=0pt,minimum size=0.1cm] {};
\node at (0.000375084,-0.872763) [circle,fill=black,inner sep=0pt,minimum size=0.1cm] {};
\node at (-0.332571,-0.10698) [circle,fill=black,inner sep=0pt,minimum size=0.1cm] {};
\node at (0.00102559,-0.349352) [circle,fill=black,inner sep=0pt,minimum size=0.1cm] {};
\node at (0.335119,-0.107142) [circle,fill=black,inner sep=0pt,minimum size=0.1cm] {};

\draw (-0.000567384,1.32022) -- (-0.514838,0.709638);
\draw (-0.000567384,1.32022) -- (0.516768,0.71127);
\draw (-0.000567384,1.32022) -- (-0.0128939,4.39213);
\draw (-0.514838,0.709638) -- (-0.205456,0.285608);
\draw (-0.514838,0.709638) -- (-1.2499,0.405167);
\draw (0.207697,0.28587) -- (-0.205456,0.285608);
\draw (0.207697,0.28587) -- (0.516768,0.71127);
\draw (0.207697,0.28587) -- (0.335119,-0.107142);
\draw (-0.205456,0.285608) -- (-0.332571,-0.10698);
\draw (0.516768,0.71127) -- (1.25578,0.407429);
\draw (-1.2499,0.405167) -- (-4.12964,1.3203);
\draw (-1.2499,0.405167) -- (-0.830163,-0.269342);
\draw (-4.12964,1.3203) -- (-2.52836,-3.48);
\draw (-4.12964,1.3203) -- (-0.0128939,4.39213);
\draw (-2.52836,-3.48) -- (-0.770035,-1.05986);
\draw (-2.52836,-3.48) -- (2.53181,-3.51952);
\draw (-0.770035,-1.05986) -- (-0.830163,-0.269342);
\draw (-0.770035,-1.05986) -- (0.000375084,-0.872763);
\draw (-0.830163,-0.269342) -- (-0.332571,-0.10698);
\draw (-0.0128939,4.39213) -- (4.18115,1.34498);
\draw (4.18115,1.34498) -- (2.53181,-3.51952);
\draw (4.18115,1.34498) -- (1.25578,0.407429);
\draw (2.53181,-3.51952) -- (0.771576,-1.06352);
\draw (1.25578,0.407429) -- (0.834,-0.270349);
\draw (0.834,-0.270349) -- (0.771576,-1.06352);
\draw (0.834,-0.270349) -- (0.335119,-0.107142);
\draw (0.771576,-1.06352) -- (0.000375084,-0.872763);
\draw (0.000375084,-0.872763) -- (0.00102559,-0.349352);
\draw (-0.332571,-0.10698) -- (0.00102559,-0.349352);
\draw (0.00102559,-0.349352) -- (0.335119,-0.107142);
\end{tikzpicture}
    \caption{A 2-ball packing with its carrier.}
    \label{fig:skeleton}

\end{figure}
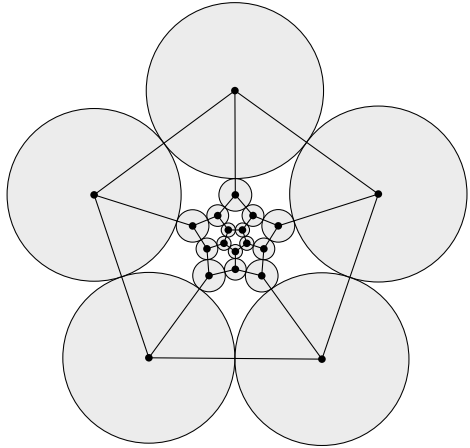

A d-ball packing $\mathcal{B}$ is said to be \textit{standard} if it contains the half-spaces $b_i=\{x_d\geq 1\}$ and $b_j=\{x_d\leq -1\}$. We denote this property by $[\mathcal{B}]^i_j$. We notice that the tangency point $b_i\cap b_j$ is at the infinity and the rest of the $d$-balls of $[\mathcal{B}]^i_j$ must lie inside the region $\{-1\leq x_d\leq 1\}$. For any d-ball packing $\mathcal{B}$ whose tangency graph contains at least one edge $ij$, a Möbius transformation $\phi:\mathcal{B}\mapsto[\mathcal{B}]^i_j$ will be called a \textit{standard transformation}. Standard transformations exist for every edge $ij$ of $G$ and they can be obtained as the product of an inversion in a $d$-ball centered at the tangency point $b_i\cap b_j$, a Euclidean isometry and a scaling of $\mathbb{R}^d$, see Figure \ref{fig;standard}.\\
\begin{figure}[H]
 \centering
\begin{tikzpicture}

\begin{scope}[scale=.12,xshift=-32cm,yshift=25cm]
\draw (-1.28409,0.741838) circle (0.793861cm);

\draw (-9.53931,-5.50753) circle (9.56004cm) node {$b_j$};
\draw (0.020156,10.7561) circle (9.305cm) node {$b_i$};
\node at (-4.69496, 2.73424) [circle,fill=black,inner sep=0pt,minimum size=.07cm]  {};
\draw[dashed] (-4.69496, 2.73424) circle (2.73cm);
\draw (0.000406956,-1.48297) circle (0.793861cm);
\draw (1.28018,0.739115) circle (0.790948cm);
\draw (-0.320324,0.18414) circle (0.319632cm);
\draw (-0.000259375,0.945177) circle (0.50597cm);
\draw (-0.820861,-0.473924) circle (0.50716cm);
\draw (9.32515,-5.36061) circle (9.305cm);
\draw (0.818417,-0.472813) circle (0.50597cm);
\draw (0.318647,0.183971) circle (0.319339cm);
\draw (-0.00069237,-0.369479) circle (0.319632cm);
\fill[darkgray,opacity=\opac] (-1.28409,0.741838) circle (0.793861cm);
\fill[darkgray,opacity=\opac] (-9.53931,-5.50753) circle (9.56004cm);
\fill[darkgray,opacity=\opac] (0.020156,10.7561) circle (9.305cm);
\fill[darkgray,opacity=\opac] (0.000406956,-1.48297) circle (0.793861cm);
\fill[darkgray,opacity=\opac] (1.28018,0.739115) circle (0.790948cm);
\fill[darkgray,opacity=\opac] (-0.320324,0.18414) circle (0.319632cm);
\fill[darkgray,opacity=\opac] (-0.000259375,0.945177) circle (0.50597cm);
\fill[darkgray,opacity=\opac] (-0.820861,-0.473924) circle (0.50716cm);
\fill[darkgray,opacity=\opac] (9.32515,-5.36061) circle (9.305cm);
\fill[darkgray,opacity=\opac] (0.818417,-0.472813) circle (0.50597cm);
\fill[darkgray,opacity=\opac] (0.318647,0.183971) circle (0.319339cm);
\fill[darkgray,opacity=\opac] (-0.00069237,-0.369479) circle (0.319632cm);
\end{scope}

\draw[->] (-1.5,3)--(1,3) node[midway,above,align=center]  {Standard\\transformation};

\begin{scope}[scale=1,xshift=4cm,yshift=3cm]
\draw (-2.8,1) -- (2.8,1) node[pos=.05,above] {$b_i$};
\fill[darkgray,opacity=\opac] (-2.8,1) rectangle (2.8,1.5);
\draw (-2.8,-1) -- (2.8,-1) node[pos=.05,below] {$b_i$};
\fill[darkgray,opacity=\opac] (-2.8,-1) rectangle (2.8,-1.5);
\draw (0.382,-0.618) circle (0.382cm);
\draw (1.62,0) circle (1.00cm);
\draw (0.382,0.618) circle (0.382cm);
\draw (-1.62,0) circle (1.00cm);
\draw (0,-0.191) circle (0.191cm);
\draw (-0.382,-0.618) circle (0.382cm);
\draw (0.382,0) circle (0.236cm);
\draw (-0.382,0.618) circle (0.382cm);
\draw (-0.382,0) circle (0.236cm);
\draw (0,0.191) circle (0.191cm);
\fill[darkgray,opacity=\opac] (0.382,-0.618) circle (0.382cm);
\fill[darkgray,opacity=\opac] (1.62,0) circle (1.00cm);
\fill[darkgray,opacity=\opac] (0.382,0.618) circle (0.382cm);
\fill[darkgray,opacity=\opac] (-1.62,0) circle (1.00cm);
\fill[darkgray,opacity=\opac] (0,-0.191) circle (0.191cm);
\fill[darkgray,opacity=\opac] (-0.382,-0.618) circle (0.382cm);
\fill[darkgray,opacity=\opac] (0.382,0) circle (0.236cm);
\fill[darkgray,opacity=\opac] (-0.382,0.618) circle (0.382cm);
\fill[darkgray,opacity=\opac] (-0.382,0) circle (0.236cm);
\fill[darkgray,opacity=\opac] (0,0.191) circle (0.191cm);
\end{scope}

\draw[->] (-3.84,1)--(-3.84,.25) node[midway,right]  {Inversion};

\begin{scope}[scale=.12,xshift=-32cm,yshift=-12cm]
\clip (-11,-6) rectangle (6,11);
\draw(-13,8.08373) -- (13,-7.19859);
\fill[darkgray,opacity=\opac](-13,8.08373)--(-13,13)--(13,13)--(13,-7.19859);
\draw (-13,7.16032) -- (13,-8.12199);
\fill[darkgray,opacity=\opac](-13,7.16032)--(-13,-10)--(13,-10)--(13,-8.12199);
\draw[dashed] (-4.69496, 2.73424) circle (2.73cm);
\node at (-1,8) {$b_i$};
\node at (-8,-3) {$b_j$};
\draw (-2.98477,1.73526) circle (0.398036cm);
\draw (-3.79573,1.92658) circle (0.152036cm);
\draw (-3.54642,2.35073) circle (0.152036cm);
\draw (-3.40893,1.98457) circle (0.0939636cm);
\draw (-3.28427,2.19665) circle (0.152036cm);
\draw (-3.53358,1.7725) circle (0.152036cm);
\draw (-4.09523,2.38797) circle (0.398036cm);
\draw (-3.67107,2.13866) circle (0.0939636cm);
\draw (-3.50148,2.12715) circle (0.0760182cm);
\draw (-3.57852,1.99608) circle (0.0760182cm);
\fill[darkgray,opacity=\opac] (-2.98477,1.73526) circle (0.398036cm);
\fill[darkgray,opacity=\opac] (-3.79573,1.92658) circle (0.152036cm);
\fill[darkgray,opacity=\opac] (-3.54642,2.35073) circle (0.152036cm);
\fill[darkgray,opacity=\opac] (-3.40893,1.98457) circle (0.0939636cm);
\fill[darkgray,opacity=\opac] (-3.28427,2.19665) circle (0.152036cm);
\fill[darkgray,opacity=\opac] (-3.53358,1.7725) circle (0.152036cm);
\fill[darkgray,opacity=\opac] (-4.09523,2.38797) circle (0.398036cm);
\fill[darkgray,opacity=\opac] (-3.67107,2.13866) circle (0.0939636cm);
\fill[darkgray,opacity=\opac] (-3.50148,2.12715) circle (0.0760182cm);
\fill[darkgray,opacity=\opac] (-3.57852,1.99608) circle (0.0760182cm);
\end{scope}

\draw[->] (-2.2,-1)--(2,-1) node[midway,above]  {Euclidean isometry};

\begin{scope}[scale=.12,xshift=37cm,yshift=-8cm,rotate=30]
\clip[rotate=-30] (-13,-7) rectangle (5,7);
\draw(-13,8.08373) -- (13,-7.19859) node[pos=.15,above=.1cm] {$b_i$};
\fill[darkgray,opacity=\opac](-13,8.08373)--(-13,13)--(13,13)--(13,-7.19859);
\draw (-13,7.16032) -- (13,-8.12199)node[pos=.135,below=.1cm] {$b_j$};
\fill[darkgray,opacity=\opac](-13,7.16032)--(-19,-10)--(13,-10)--(13,-8.12199);

\draw (-2.98477,1.73526) circle (0.398036cm);
\draw (-3.79573,1.92658) circle (0.152036cm);
\draw (-3.54642,2.35073) circle (0.152036cm);
\draw (-3.40893,1.98457) circle (0.0939636cm);
\draw (-3.28427,2.19665) circle (0.152036cm);
\draw (-3.53358,1.7725) circle (0.152036cm);
\draw (-4.09523,2.38797) circle (0.398036cm);
\draw (-3.67107,2.13866) circle (0.0939636cm);
\draw (-3.50148,2.12715) circle (0.0760182cm);
\draw (-3.57852,1.99608) circle (0.0760182cm);
\fill[darkgray,opacity=\opac] (-2.98477,1.73526) circle (0.398036cm);
\fill[darkgray,opacity=\opac] (-3.79573,1.92658) circle (0.152036cm);
\fill[darkgray,opacity=\opac] (-3.54642,2.35073) circle (0.152036cm);
\fill[darkgray,opacity=\opac] (-3.40893,1.98457) circle (0.0939636cm);
\fill[darkgray,opacity=\opac] (-3.28427,2.19665) circle (0.152036cm);
\fill[darkgray,opacity=\opac] (-3.53358,1.7725) circle (0.152036cm);
\fill[darkgray,opacity=\opac] (-4.09523,2.38797) circle (0.398036cm);
\fill[darkgray,opacity=\opac] (-3.67107,2.13866) circle (0.0939636cm);
\fill[darkgray,opacity=\opac] (-3.50148,2.12715) circle (0.0760182cm);
\fill[darkgray,opacity=\opac] (-3.57852,1.99608) circle (0.0760182cm);
\end{scope}

\draw[->] (4,.25)--(4,1) node[midway,left]  {Scaling};
\end{tikzpicture}

    \caption{Example of a standard transformation.}
    \label{fig;standard}
\end{figure}
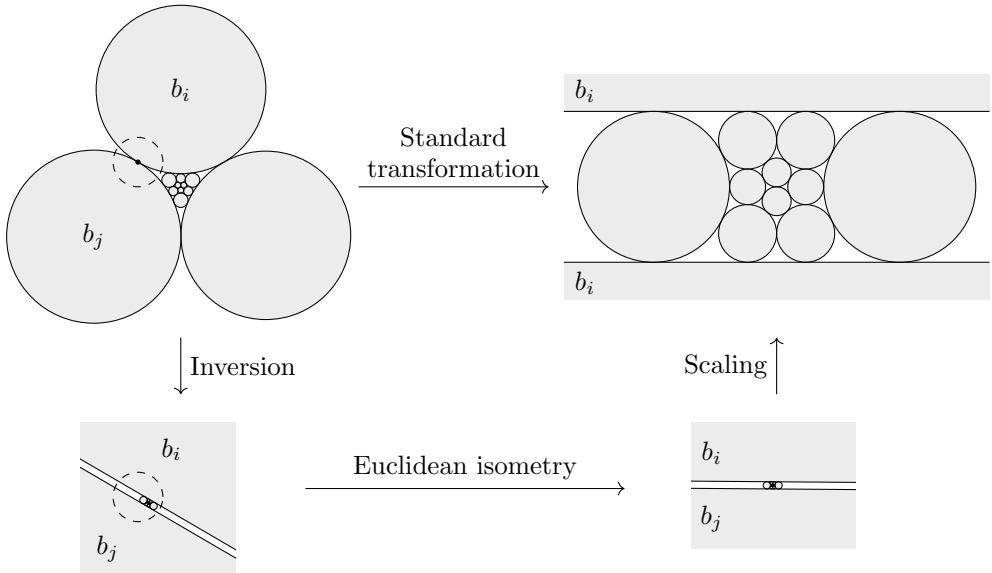

Two $d$-ball packings $\mathcal{B}$ and $\mathcal{B}'$ will be said to be \textit{Möbius congruent} if there exists $\mu\in M\ddot{o}b(d)$ such that $\mu:\mathcal{B}\mapsto\mathcal{B}'$. If in addition $\mu$ is a Euclidean isometry then we will say that $\mathcal{B}$ and $\mathcal{B}'$ are \textit{Euclidean congruent}.
\begin{rem}\label{generalization} Any $d$-ball packing is Möbius congruent to a $d$-ball packing formed by solid balls. 
\end{rem}

For any simple graph $G$ we define the space of equivalence classes under the action of the Möbius Group
\begin{eqnarray*}
\mathcal{M}^d(G):=\{d\text{-ball packings with tangency graph $G$}\}/_{M\ddot{o}b(d)}.
\end{eqnarray*}

We notice that a graph $G$ is \textit{$d$-ball packable} if and only if $\mathcal{M}^d(G)$ is not empty. We say that a $d$-ball packable graph is \textit{Möbius rigid} if all the $d$-ball packings with tangency graph $G$ are Möbius congruent, which is equivalent to say that  $\mathcal{M}^d(G)\simeq 1$. 
The advantage of a Möbius rigid graph $G$ is that all the properties which are preserved under the action of the Möbius group can be read in just one example of a disk packing $\mathcal{B}_G$.  A useful result to compute the space $\mathcal{M}^d(G)$ is the following:

\begin{lem}\label{congru} Let $\mathcal{B}_G$ and $\mathcal{B}'_G$ be two $d$-ball packings with same tangency graph $G$ and let $ij$ be an edge of $G$. Then $\mathcal{B}_G$ and  $\mathcal{B}'_G$ are Möbius congruent if and only if $[\mathcal{B}_G]^{i}_j$ and $[\mathcal{B}'_G]^{i}_j$ are Euclidean congruent.
\end{lem}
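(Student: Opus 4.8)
The plan is to prove both directions by exhibiting explicit Möbius transformations and using the fact (established in the excerpt) that the Möbius group preserves tangency graphs, carriers, and inversive products, together with the existence of standard transformations for every edge. The forward direction is essentially trivial: if $\mathcal{B}_G$ and $\mathcal{B}'_G$ are Möbius congruent via $\mu$, then $\mu$ carries the edge $ij$ to itself as an edge, so $\mu$ maps $[\mathcal{B}_G]^i_j$ to a standard packing with the same pair of half-spaces; composing with $\mu$ and appropriate standard transformations $\phi, \phi'$ one gets a Möbius transformation $\psi = \phi' \circ \mu \circ \phi^{-1}$ taking $[\mathcal{B}_G]^i_j$ to $[\mathcal{B}'_G]^i_j$. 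The key point to extract here is that any Möbius transformation fixing the \emph{two} half-spaces $\{x_d \ge 1\}$ and $\{x_d \le -1\}$ (as a set, even allowing them to swap) must be a Euclidean isometry: it must preserve the slab $\{-1 \le x_d \le 1\}$, hence preserve the pair of bounding hyperplanes, hence send $\infty$ to $\infty$; a Möbius transformation fixing $\infty$ is a similarity, and one fixing two parallel hyperplanes at fixed distance has scaling factor $1$. So $\psi$ is Euclidean, giving Euclidean congruence of the standard forms.

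For the converse, suppose $[\mathcal{B}_G]^i_j$ and $[\mathcal{B}'_G]^i_j$ are Euclidean congruent, say by an isometry $\rho$. Since standard transformations $\phi : \mathcal{B}_G \mapsto [\mathcal{B}_G]^i_j$ and $\phi' : \mathcal{B}'_G \mapsto [\mathcal{B}'_G]^i_j$ exist (they are Möbius transformations, being composites of an inversion, a Euclidean isometry, and a scaling as recalled before Figure~\ref{fig;standard}), the composite $\mu := (\phi')^{-1} \circ \rho \circ \phi$ is a Möbius transformation with $\mu : \mathcal{B}_G \mapsto \mathcal{B}'_G$. This directly witnesses Möbius congruence, finishing the proof.

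The only genuinely substantive step is the rigidity claim used in the forward direction — that a Möbius transformation preserving the standard pair of half-spaces is a Euclidean isometry — which I would phrase as a short lemma or inline remark. One clean way to see it: such a transformation fixes the tangency point $b_i \cap b_j = \infty$, and by the structure of $\mathsf{M\ddot{o}b}(\wrd)$ stabilizing $\infty$ it is an affine similarity $x \mapsto \lambda A x + v$ with $A$ orthogonal; preserving the unordered pair of hyperplanes $\{x_d = 1\}, \{x_d = -1\}$ forces $\lambda = 1$. I expect this to be the main obstacle only in the sense of needing to be stated carefully; the rest is formal bookkeeping with the isomorphisms and equivariant actions of diagram~(\ref{isogroups}). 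Note also that the relevance of this lemma is precisely that it reduces the computation of $\mathcal{M}^d(G)$ to classifying standard packings up to the much smaller group of Euclidean isometries, which is the use made of it in the sequel.
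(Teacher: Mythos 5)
Your proposal is correct and follows the same overall scheme as the paper's proof: both directions are handled by conjugating with standard transformations, and your converse direction coincides exactly with the paper's sufficiency argument. The one place you genuinely diverge is the justification of the rigidity step, namely that a M\"obius transformation $\theta$ carrying $[\mathcal{B}_G]^i_j$ to $[\mathcal{B}'_G]^i_j$ and preserving the half-spaces $b_i=\{x_d\geq 1\}$ and $b_j=\{x_d\leq -1\}$ must be a Euclidean isometry. The paper argues that such a $\theta$ is generated by inversions $\sigma_b$ in $d$-balls $b$ orthogonal to both half-spaces, observes that a $d$-ball orthogonal to two parallel half-spaces must itself be a half-space, and concludes that $\theta$ is a product of Euclidean reflections. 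You instead note that $\theta$ fixes the tangency point $b_i\cap b_j=\infty$, hence is an affine similarity $x\mapsto\lambda Ax+v$ with $A$ orthogonal, and that preserving the pair of parallel boundary hyperplanes at mutual distance $2$ forces $\lambda=1$. Both routes are valid and rest on standard facts about $\mathsf{M\ddot{o}b}(\wrd)$ that the paper does not prove in detail (generation of a ball's stabilizer by orthogonal inversions in one case, the classification of M\"obius transformations fixing $\infty$ as similarities in the other); yours is arguably the more elementary and self-contained of the two, and your remark that the argument tolerates a swap of $b_i$ and $b_j$ is a small bonus in robustness.
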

\begin{proof}
Let $\phi:\mathcal{B}_G\mapsto[\mathcal{B}_G]^{i}_j$ and $\psi:\mathcal{B}'_G\mapsto[\mathcal{B}'_G]^{i}_j$ be two standard transformations.\\

\noindent(Sufficiency) If there is a Euclidean isometry $\gamma:[\mathcal{B}_G]^{i}_j\mapsto[\mathcal{B}'_G]^{i}_j$
then $\psi^{-1}\circ\gamma\circ\phi$ defines a Möbius transformation mapping $\mathcal{B}_G$ to $\mathcal{B}'_G$. \\

\noindent(Necessity) Let us suppose that there is a Möbius transformation  $\mu:\mathcal{B}_G\mapsto\mathcal{B}'_G$. Then $\theta:=\psi\circ\mu\circ\phi^{-1}$ is a Möbius transformation mapping $[\mathcal{B}_G]^{i}_j$ to $[\mathcal{B}'_G]^{i}_j$ and leaving fixed the half-spaces $b_i$ and $b_j$. Therefore, $\theta$ is generated by inversions on $d$-balls which are simultaneously orthogonal to $b_i$ and $b_j$. A $d$-ball simultaneously orthogonal to two parallel half-spaces must be also a half-space. Therefore, $\theta$ can be expressed as a product of Euclidean reflections so $\theta$ is a Euclidean isometry.
\end{proof}
\vspace{1cm}
The family of $d$-ball packable graphs are fully characterized for $d=1,2$. Such characterization is still unknown nowadays when $d\ge3$.
Indeed, $d$-ball packable graphs are closely related to the $(d-1)$-ball packable graphs which can be made by $(d-1)$-balls of same size. It has been proved that the recognition of the tangency graphs of disk packings made by equal disks (and more generally disks with bounded ratio of diameters) turned out to be NP-hard \cite{breu}, see also \cite{hlineny}. However, many properties and constructions of $3$-ball packable graphs has been found, see \cite{maenoha}, \cite{mae2}, \cite{mae3}, \cite{kuperschr}, \cite{chenlabbe}, \cite{bezdekreid}.\\

From now on, we shall focus our attention to $d$-ball packings for $d=2,3$. In order to simplify the notation, we will call \textit{disks} (resp. \textit{balls}) the $2$-balls (resp. $3$-balls) and the collections of disks and balls will be denoted by $\mathcal{D}$ and $\mathcal{B}$  respectively.\\

Disk packable graphs were fully characterized in 1936 by Koebe  \cite{koebe}. The latter was rediscovered by Thurston by using some results of Andreev on hyperbolic $3$-polytopes. The well-known Koebe-Andreev-Thurston circle packing Theorem (KAT Theorem) states

\begin{thm}\label{KAT} A graph $G$ is disk packable if and only if $G$ is a simple planar graph.
Moreover, if $G$ is a triangulation of  $\mathbb{S}^2$ then $G$ is Möbius rigid.
\end{thm}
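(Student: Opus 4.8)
The plan is to prove the two implications separately, the genuine content lying in the ``if'' direction, which I would reduce to the case of a triangulation of $\stw$. \emph{Necessity} is immediate: if $G$ is disk packable, say by $\mathcal{D}_G=\{b_1,\dots,b_n\}$ (which we may take to consist of solid balls, by Remark \ref{generalization}), then $G$ is simple by the definition of a tangency graph, and its carrier embeds $G$ in $\widehat{\mathbb{R}^2}\cong\stw$ — the straight segment joining the centers of two externally tangent disks meets their union only inside those two disks, through the tangency point, so two distinct carrier edges can only share an endpoint — whence $G$ is simple and planar. For the converse, let $G$ be simple and planar; embed it in $\stw$, dispose of the trivial cases $|V(G)|\le 3$ by hand, and complete the embedding to a triangulation $\widehat G$ of $\stw$ by inserting \emph{new vertices only}, one inside each non-triangular face joined to the vertices on that face, so that no edge between two vertices of $G$ is added. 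Then $G$ is the induced subgraph of $\widehat G$ on $V(G)$ (a standard construction; see \cite{stephenson}), and deleting from a disk packing of $\widehat G$ the disks of the added vertices produces a disk packing with tangency graph exactly $G$. So it suffices to show that every triangulation $T$ of $\stw$ is disk packable and Möbius rigid.

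\emph{Existence for a triangulation $T$.} I would use Thurston's variational method \cite{thurston}. Delete a vertex $v_\infty$ of $T$; its star is bounded by a cycle enclosing a triangulated closed disk $T'$ with interior vertex set $I$ and boundary vertex set $\partial$. Working in $\mathbb{H}^2$, assign to each $i$ a radius $r_i\in(0,\infty]$, prescribing $r_i=\infty$ (a horocycle) for $i\in\partial$ and $r_i<\infty$ for $i\in I$; each face $\{i,j,k\}$ of $T'$ then determines a unique hyperbolic triangle carrying three mutually tangent inscribed disks of radii $r_i,r_j,r_k$, hence an angle $\theta^{jk}_i(r)$ at $i$, and the goal is the flower system $\sum_{jk}\theta^{jk}_i(r)=2\pi$ for all $i\in I$. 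In the coordinates $u_i=\log\tanh(r_i/2)\in(-\infty,0)$ this is the critical-point equation of an explicit strictly concave functional, so a solution is unique if it exists, and existence follows from a properness argument ruling out $u_i\to 0$ (some radius $\to\infty$) and $u_i\to-\infty$ (some radius $\to 0$) — here is where the combinatorics of a triangulation enters: a vertex whose radius dominates all of its neighbours' has angle sum near $0$, while one whose radius is dominated has angle sum $>2\pi$. Gluing the triangles around each interior vertex gives a disk packing of $T'$ in $\mathbb{H}^2$ with horocyclic boundary disks; adjoining the disk $-b_{v_\infty}$ that fills the outer region yields the desired disk packing of $T$ in $\widehat{\mathbb{R}^2}$. (Alternatively one invokes Andreev's realization theorem for the associated ideal hyperbolic polyhedron.)

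\emph{Möbius rigidity, and the main obstacle.} Let $\mathcal{D}_T,\mathcal{D}'_T$ pack the same triangulation $T$ and fix an edge $ij$; by Lemma \ref{congru} it is enough that $[\mathcal{D}_T]^i_j$ and $[\mathcal{D}'_T]^i_j$ be Euclidean congruent. In standard form each of them is the two parallel half-spaces together with solid disks whose radii solve, now in Euclidean geometry, the same flower equations at the interior vertices; by the strict monotonicity underlying the concavity above (raising one radius relative to its neighbours strictly decreases the angle sum there and increases it at each neighbour) that solution is unique up to a global scaling, and the scaling is pinned down by the two half-spaces, so the two standard packings agree up to a Euclidean isometry. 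Hence $\mathcal{D}_T$ and $\mathcal{D}'_T$ are Möbius congruent, i.e.\ $\mathcal{M}^2(T)\simeq 1$. The hard part is the existence half of the core step — choosing coordinates in which the flower functional is genuinely concave and, above all, proving its properness so that radii cannot run off to $0$ or $\infty$; the reduction to triangulations and the standard-form normalization are bookkeeping, and uniqueness/rigidity is then a soft consequence of convexity.
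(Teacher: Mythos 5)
The paper offers no proof of this statement to compare against: it is the classical Koebe--Andreev--Thurston theorem, quoted from \cite{koebe} and \cite{thurston} and used as a black box (the reader is sent to \cite{bowers} for its history). Your sketch is the standard modern proof --- reduce to triangulations of $\mathbb{S}^2$, run Thurston's/Colin de Verdi\`ere's variational argument on hyperbolic radii with horocyclic boundary data, and get rigidity from the monotonicity of the angle sums via a maximum-ratio argument --- and its architecture is sound; you also correctly locate the real work in the properness of the flower functional. Two steps are stated more optimistically than they deserve. First, completing a plane embedding to a triangulation by one new apex per non-triangular face only works when every face boundary is a simple cycle; if $G$ is not $2$-connected (a tree, or any graph with a cut vertex) a face walk repeats vertices, and a single apex joined to the distinct vertices of that walk leaves non-triangular faces behind --- one must first augment to a $2$-connected plane graph, or insert a cycle of new vertices per face, still without adding edges inside $V(G)$. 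Second, the properness argument cannot be run one vertex at a time: the degenerations to exclude are those in which a whole set $S$ of interior vertices has its radii tend to $0$ or to $\infty$ \emph{simultaneously}, and ruling these out requires summing angle defects over $S$ and invoking Euler's formula for the triangulation, not just the single-vertex monotonicity you cite. Both points are standard and repairable, so I would grade this as a correct proposal of the classical proof whose two soft spots are exactly where a referee would push; the rigidity half, including the reduction via Lemma \ref{congru} to Euclidean congruence of the standard forms, is fine as sketched.
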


For a detailed survey on the applications of the KAT Theorem we refer the readers to a recent paper of Bowers  \cite{bowers}.

\subsection{Pyramidal disk systems}
The graph of a polyhedron is the graph made by its vertices and edges. Steinitz proved that the graphs of convex polyhedra are the $3$-connected simple planar graphs. We denote $\triangle$, $\lozenge$ and $\boxtimes$ the graphs of the tetrahedron, octahedron and a square pyramid respectively with the labeling given in Figure \ref{fig:polygraphs}.

\begin{figure}[H]
\centering

\begin{tikzpicture}[scale=1]

\begin{scope}[scale=1.5,xshift=-3.5cm]
\coordinate (OO) at (0,0);
\node (Vone) at (-0.866,0) [circle,fill=black,inner sep=0pt,minimum size=.15cm]  {};
\node (Vtwo) at (0.866,0) [circle,fill=black,inner sep=0pt,minimum size=.15cm]  {};
\node (Vthr) at (0,1.5) [circle,fill=black,inner sep=0pt,minimum size=.15cm]  {};
\node (VO) at (0,0.5) [circle,fill=black,inner sep=0pt,minimum size=.15cm]  {};
\draw[fill=black] (Vone) circle (0cm) node[anchor= north east] {$1$};
\draw[fill=black] (Vtwo) circle (0cm) node[anchor= north west] {$2$};
\draw[fill=black] (Vthr) circle (0cm) node[anchor= south,inner sep=5 pt] {$3$};
\draw[fill=black] (VO) circle (0cm) node[anchor=north,inner sep=5 pt] {$4$};
\draw (Vone) -- (Vtwo) -- (Vthr) -- (Vone);
\draw (Vone) -- (VO) -- (Vtwo);
\draw (VO) -- (Vthr); 
\node [below of= OO]  {(a)};

\end{scope}

\begin{scope}[scale=1.5]
\coordinate (OO) at (0,0);
\node (Vone) at (-0.866,0) [circle,fill=black,inner sep=0pt,minimum size=.15cm]  {};
\node (Vtwo) at (0.866,0) [circle,fill=black,inner sep=0pt,minimum size=.15cm]  {};
\node (Vthr) at (0,1.5) [circle,fill=black,inner sep=0pt,minimum size=.15cm]  {};
\node (VoneM) at (0.14,0.581) [circle,fill=black,inner sep=0pt,minimum size=.15cm]  {};
\node (VtwoM) at (-0.14,0.581) [circle,fill=black,inner sep=0pt,minimum size=.15cm]  {};
\node (VthrM) at (0,0.338) [circle,fill=black,inner sep=0pt,minimum size=.15cm]  {};
\draw[fill=black] (Vone) circle (0cm) node[anchor= north east] {$1$};
\draw[fill=black] (Vtwo) circle (0cm) node[anchor= north west] {$2$};
\draw[fill=black] (Vthr) circle (0cm) node[anchor= south,inner sep=5 pt] {$3$};
\draw[fill=black] (VoneM) circle (0cm) node[anchor= south west,inner sep=0 pt] {$-1$};
\draw[fill=black] (VtwoM) circle (0cm) node[anchor= south east,inner sep=1 pt] {$-2$};
\draw[fill=black] (VthrM) circle (0cm) node[anchor= north] {$-3$};
\draw (Vone) -- (Vtwo) -- (Vthr) -- (Vone);
\draw (VoneM) -- (VtwoM) -- (VthrM) -- (VoneM);
\draw (Vone) -- (VtwoM) -- (Vthr) -- (VoneM) -- (Vtwo) -- (VthrM) -- (Vone) ; 

\node [below of= OO] {(b)};

\end{scope}

\begin{scope}[xshift=5cm,yshift=1cm]
\coordinate (OO) at (0,-1);
\draw \SW rectangle \NE;
\draw \SW -- \NE;
\draw \NW -- \SE;

\draw[fill=black] (0,0) circle (.0cm) node[anchor= north,inner sep=6pt] {$x$};
\draw[fill=black] \NE circle (.0cm) node[anchor= south west] {$-1$};
\draw[fill=black] \NW circle (.0cm) node[anchor= south east] {$-2$};
\draw[fill=black] \SW circle (.0cm) node[anchor=north  east] {$1$};
\draw[fill=black] \SE circle (.0cm) node[anchor=north west] {$2$};

\node at (0,0) [circle,fill=black,inner sep=0pt,minimum size=.15cm]  {};
\node at \NE [circle,fill=black,inner sep=0pt,minimum size=.15cm]  {};
\node at \NW [circle,fill=black,inner sep=0pt,minimum size=.15cm]  {};
\node at \SW [circle,fill=black,inner sep=0pt,minimum size=.15cm]  {};
\node at \SE [circle,fill=black,inner sep=0pt,minimum size=.15cm]  {};
\node [below of= OO] {(c)};
\end{scope}

\end{tikzpicture}  
 \caption{(a) A planar embedding of $\triangle$; (b) a planar embedding of $\lozenge$; (c) a planar embedding of $\boxtimes$.}
 \label{fig:polygraphs}
\end{figure}
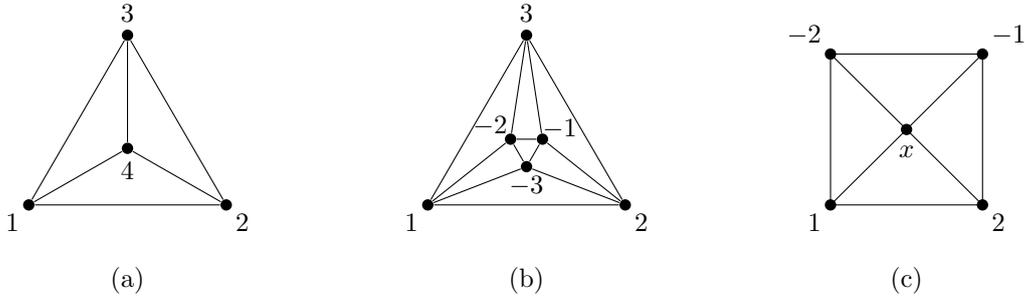 
Notice that $\boxtimes$ is isomorphic to the subgraph of $\lozenge$ obtained by deleting one vertex.
These three graphs are simple and planar and hence disk packable by the KAT theorem. We call a disk packing $\mathcal{D}_G$ \textit{tetrahedral}, \textit{octahedral} and \textit{pyramidal} if $G=\triangle,$ $\lozenge$, $\boxtimes$ respectively.

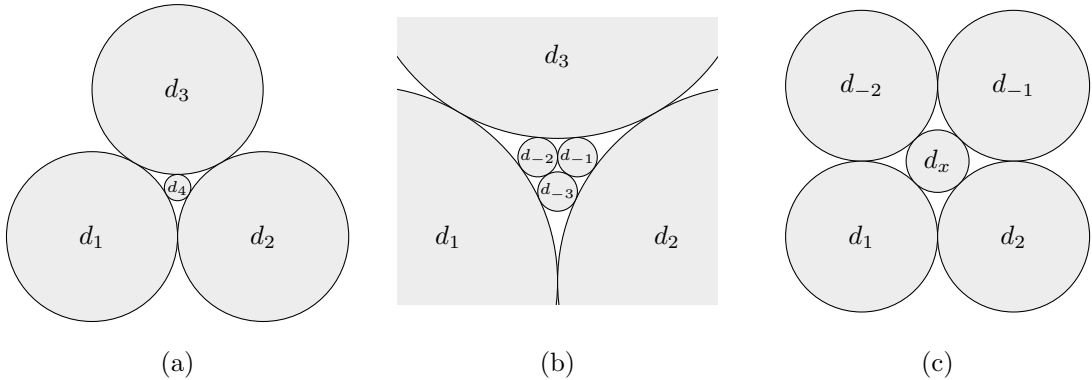
\begin{figure}[H]
\centering

\begin{tikzpicture}[scale=1]

\begin{scope}[xshift=-5cm,scale=1.3]

\draw[black,fill=mylgray] (-0.866,0) circle (0.866cm) node {$d_{1}$};
\draw[black,fill=mylgray] (0.866,0) circle (0.866cm) node {$d_{2}$};
\draw[black,fill=mylgray] (0,1.5)  circle (0.866cm) node {$d_{3}$};
\draw[black,fill=mylgray] (.0,.5)  circle (0.134cm) node {\tiny $d_{4}$};

\end{scope}

\begin{scope}[scale=3,yshift=-.2cm]
\clip (-.7,-.1) rectangle (.7,1.17);

\draw[black,fill=mylgray] (-0.866,0) circle (0.866cm) node at (-.48,.2) {$d_{1}$};
\draw[black,fill=mylgray] (0.866,0) circle (0.866cm) node at (.48,.2) {$d_{2}$};
\draw[black,fill=mylgray] (0,1.5)  circle (0.866cm) node at (0,1) {$d_{3}$};
\draw[black,fill=mylgray] (.087,.551)  circle (0.087cm) node {\tiny $d_{-1}$};
\draw[black,fill=mylgray] (-.087,.551)  circle (0.087cm) node {\tiny $d_{-2}$};
\draw[black,fill=mylgray] (0,.398979)  circle (0.087cm) node {\tiny $d_{-3}$};

\end{scope}

\begin{scope}[xshift=5cm,yshift=1cm]
\coordinate (OO) at (0,-1);

\draw[black,fill=mylgray] \NE circle (1cm) node {$d_{-1}$};
\draw[black,fill=mylgray] \NW circle (1cm) node {$d_{-2}$};
\draw[black,fill=mylgray] \SW circle (1cm) node {$d_{1}$};
\draw[black,fill=mylgray] \SE circle (1cm) node {$d_{2}$};
\draw[black,fill=mylgray] (0,0) circle (.414cm) node {$d_{x}$};

\end{scope}

\node at (-5,-1.7) {(a)};
\node at (0,-1.7) {(b)};
\node at (5,-1.7) {(c)};
\end{tikzpicture}  
   \caption{(a) A tetrahedral disk packing; (b) an octahedral disk packing; (c) a pyramidal disk packing.}
   \label{fig:diskpackpolys}
\end{figure}

Tetrahedral and octahedral disk packings have been well-studied. Since $\triangle$ and $\lozenge$ are triangulations of $\stw$,  $\triangle$ and $\lozenge$ are Möbius rigid. Many nice properties about the behaviour of the curvatures of the disks in tetrahedral and octahedral disk packings can be deduced from the Möbius rigidity, see  \cite{lagarias}. Unfortunately, pyramidal disk packings are not Möbius rigid as we show in the following.

\begin{prop}\label{prop;boxcongru}$\mathcal{M}^2(\boxtimes) \simeq \mathbb{R}$.
\end{prop}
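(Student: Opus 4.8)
The plan is to use Lemma~\ref{congru} to reduce the computation of $\mathcal{M}^2(\boxtimes)$ to a concrete moduli problem about standard disk packings, and then to exhibit a one-real-parameter family of such packings that are pairwise non-congruent under Euclidean isometries. Fix the edge of $\boxtimes$ joining the two apex-opposite base vertices, say $\{1,2\}$; by Lemma~\ref{congru}, two pyramidal disk packings are M\"obius congruent if and only if their standard forms $[\mathcal{D}_\boxtimes]^1_2$ are Euclidean congruent. So it suffices to parametrize all packings $[\mathcal{D}_\boxtimes]^1_2$ up to Euclidean isometry.

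First I would write down the standard form explicitly. In $[\mathcal{D}_\boxtimes]^1_2$ the disks $d_1,d_2$ become the half-spaces $\{y\ge 1\}$ and $\{y\le -1\}$ (using coordinates $(x,y)$ on $\rtw$), and the remaining three disks $d_{-1}, d_{-2}, d_x$ all live in the strip $\{-1\le y\le 1\}$, each tangent to both bounding lines, hence each of radius $1$; their centers lie on the line $y=0$. The adjacencies in $\boxtimes$ among $\{-1,-2,x\}$: the apex $x$ is adjacent to all four base vertices, so $d_x$ is tangent to $d_{-1}$ and $d_{-2}$; and $d_{-1}$ is tangent to $d_{-2}$? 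No --- in $\lozenge$ the vertices $-1,-2$ are non-adjacent (opposite vertices of the octahedron), and this persists in $\boxtimes$, so $d_{-1}$ and $d_{-2}$ are disjoint. Thus we have three unit disks on a common line, $d_x$ in the middle, tangent to $d_{-1}$ on one side and $d_{-2}$ on the other, with $d_{-1}$ and $d_{-2}$ disjoint: but three mutually-tangent-in-a-row unit disks on a line forces $d_{-1}$ and $d_{-2}$ to be at distance $4$ apart center-to-center and hence disjoint automatically, and the configuration is then rigid up to translation along $y=0$ and the reflection $y\mapsto -y$. That would give $\mathcal{M}^2(\boxtimes)\simeq 1$, contradicting the statement --- so I have the combinatorics of $\boxtimes$ wrong and must recheck Figure~\ref{fig:polygraphs}(c): the base $4$-cycle is $1,-1,2,-2$ (so $d_1$ is tangent to $d_{-1}$ and $d_{-2}$ but not $d_2$), and the apex $x$ is tangent to all four. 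Redoing the standard form with the edge $\{1,2\}$ replaced by a genuine edge, say $\{1,-1\}$: then $d_1,d_{-1}$ are the two parallel half-spaces, and $d_2, d_{-2}, d_x$ are three unit disks in the strip. Now $d_2$ is adjacent (in the base cycle $1,-1,2,-2$) to $-1$ and $-2$; $d_{-2}$ is adjacent to $1$ and $2$; and $d_x$ adjacent to all of $1,-1,2,-2$. Among $\{2,-2,x\}$: $2\sim -2$ (consecutive in $1,-1,2,-2$? the cycle is $1$--$(-1)$--$2$--$(-2)$--$1$, so yes $2\sim -2$), $x\sim 2$, $x\sim -2$. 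So $d_2, d_{-2}, d_x$ are three mutually tangent unit disks on the line $y=0$ --- but three mutually tangent equal disks can't be collinear. Hence $d_2$ and $d_{-2}$ need not both be tangent to both half-spaces; I must be careful about which base vertices are adjacent to $1$ versus to $-1$. The key step, then, is to get the incidence structure of $\boxtimes$ exactly right from the figure, write the standard form, and find the free parameter --- which, I expect, is the position of the apex disk $d_x$ along the axis (it is tangent to the two half-spaces and to one or two of the base disks, leaving one translational degree of freedom that is \emph{not} absorbed by Euclidean isometry because the base disks are pinned).

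Concretely, the approach is: (i) from Figure~\ref{fig:polygraphs}(c), read off that in $\boxtimes$ the apex $x$ is adjacent to $1,2,-1,-2$ and the base $4$-cycle is $1$--$2$--$(-1)$--$(-2)$--$1$ (or whatever it actually is), and pick an edge $ij$ of the base so that the standard form has a genuine one-parameter family; (ii) place $d_i, d_j$ as the half-spaces $\{y\ge 1\},\{y\le -1\}$, deduce that the other three disks are unit disks with centers on $y=0$, and solve the tangency conditions, finding that exactly one continuous parameter $t$ (a signed distance along the axis) survives; (iii) compute that the residual Euclidean-isometry freedom of $[\mathcal{D}_\boxtimes]^i_j$ --- translations along $y=0$ and the reflection across it --- acts on this parameter with a single orbit representative per value of $|t-t_0|$ or similar, so that the quotient is a copy of $\rone$ (or a half-line identified with $\rone$); (iv) invoke Lemma~\ref{congru} to conclude $\mathcal{M}^2(\boxtimes)\simeq\rone$. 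Throughout, the inversive-coordinate / Lorentzian formalism of Section~\ref{sec:ballspace} gives a clean way to verify the tangency equations: the Gramian of the five Lorentzian vectors must have the prescribed off-diagonal entries ($-1$ on edges of $\boxtimes$, and the non-edges forced to be $\le -1$), and one checks the solution set modulo $\mathsf{O}^\uparrow(\rtw{}^{,1})$-stabilizer of the edge is $1$-dimensional.

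The main obstacle I anticipate is purely bookkeeping rather than conceptual: correctly extracting the adjacency structure of $\boxtimes$ from the figure (the labels $1,2,-1,-2$ on the square and $x$ at the center strongly suggest it is the ``octahedron minus a vertex'' with a specific base $4$-cycle, but the sign pattern matters), and then verifying that the surviving modulus is a \emph{genuine} modulus --- i.e.\ that distinct parameter values really do give Euclidean-incongruent standard packings and hence M\"obius-incongruent pyramidal packings. The reflection and translation symmetries of the standard strip must be quotiented out honestly; the claim $\mathcal{M}^2(\boxtimes)\simeq\rone$ (rather than $\simeq[0,\infty)$ or $\simeq\rone_{>0}$) presumably reflects a choice of how to coordinatize, so I would make that parametrization explicit and check it is a homeomorphism onto $\rone$. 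Once the standard form is pinned down, the rest --- the tangency computations and the identification of the quotient --- is routine.
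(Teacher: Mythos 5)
Your overall strategy coincides with the paper's: reduce via Lemma~\ref{congru} to classifying standard packings $[\mathcal{D}_\boxtimes]^i_j$ up to Euclidean congruence, and exhibit a one-parameter family of pairwise non-congruent ones. But as written the proposal has a genuine gap: you never pin down the adjacency structure of $\boxtimes$, and both of your concrete attempts at the standard form rest on misreadings of it. From Figure~\ref{fig:polygraphs}(c), the base $4$-cycle is $1$--$2$--$(-1)$--$(-2)$--$1$ and the apex $x$ is joined to all four base vertices, so the two non-edges are exactly $\{1,-1\}$ and $\{2,-2\}$. Your first attempt therefore fails not because $\{1,2\}$ is a non-edge (it is an edge), but because after standardizing along it only $d_x$ is adjacent to both half-spaces; $d_{-1}$ is not adjacent to $1$ and $d_{-2}$ is not adjacent to $2$, so they are not forced to be unit disks and no rigidity contradiction arises. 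Your second attempt standardizes along $\{1,-1\}$, which is not an edge of $\boxtimes$ at all, so there is no standard transformation for that pair. Your guess that the surviving modulus is the position of the apex disk along the axis is also off target: in the paper's normalization the apex becomes one of the two half-spaces.

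The normalization that makes everything transparent, and the one the paper uses, is along an apex--base edge: $[\mathcal{D}_\boxtimes]^{-1}_x$, so $d_{-1}=\{y\geq 1\}$ and $d_x=\{y\leq -1\}$. Then $d_2$ and $d_{-2}$, being adjacent to both $x$ and $-1$, are unit disks tangent to both lines, while the remaining base disk $d_1$ (not adjacent to $-1$) is tangent only to $d_x$, $d_2$, $d_{-2}$; its curvature $\kappa_1$ is the free parameter, constrained to the open interval $(1,4)$ because at $\kappa_1=1$ (resp.\ $\kappa_1=4$) the extra tangency $d_1\cap d_{-1}$ (resp.\ $d_2\cap d_{-2}$) appears and the tangency graph is no longer $\boxtimes$. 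Distinct values of $\kappa_1$ give Euclidean non-congruent standard packings (the curvatures differ), every standard pyramidal packing arises this way, and $(1,4)$ is homeomorphic to $\rone$. Your worry about honestly quotienting residual translations and reflections of the strip is moot in this normalization: once the two half-spaces and $\kappa_1$ are fixed, the configuration is determined up to a Euclidean isometry, so $\kappa_1$ is a complete invariant of the class. With the adjacency corrected, the rest of your plan goes through; without it, the key computation is never actually performed.
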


\begin{proof}
Let $[\mathcal{D}_\boxtimes]^{-1}_x(\kappa_1)=\{d_x,d_1,d_2,d_{-1},d_{-2}\}$ be a standard disk packing where $d_2$ and $d_{-2}$ are two unit disks tangent to the half-spaces $d_{-1}=\{y\geq 1\}$, $d_x=\{y\leq-1\}$ and $d_1$ is a disk of curvature $\kappa_1\in\mathbb{R}$ tangent to $d_2$, $d_{-2}$ and $d_x$.

First of all, notice that $1<\kappa_1<4$. Indeed, when $\kappa_1<1$ (resp. $\kappa_1>4$) the disks $d_1$ and $d_{-1}$ (resp. $d_2$ and $d_{-2}$) intersect internally and when $\kappa_1=1$ (resp. $4$) $d_1$ and $d_{-1}$ (resp. $d_2$ and $d_{-2}$) would be tangent and the tangency graph would be other than $\boxtimes$, see Figure \ref{fig:pyrak}. We also notice that the collection of disk-packings $\{[\mathcal{D}_\boxtimes]^{-1}_x(\kappa_1)\}_{1<\kappa_1<4}$ are Euclidean non-congruent. Therefore, by Lemma \ref{congru}, they represent different equivalence classes in $\mathcal{M}^2(\boxtimes)$. Moreover, these are the only possible standard pyramidal disk packings. Hence, $\mathcal{M}^2(\boxtimes)$ is in bijection to the open interval $(1,4)$ which is homeomorphic to $\rone$.
\end{proof}

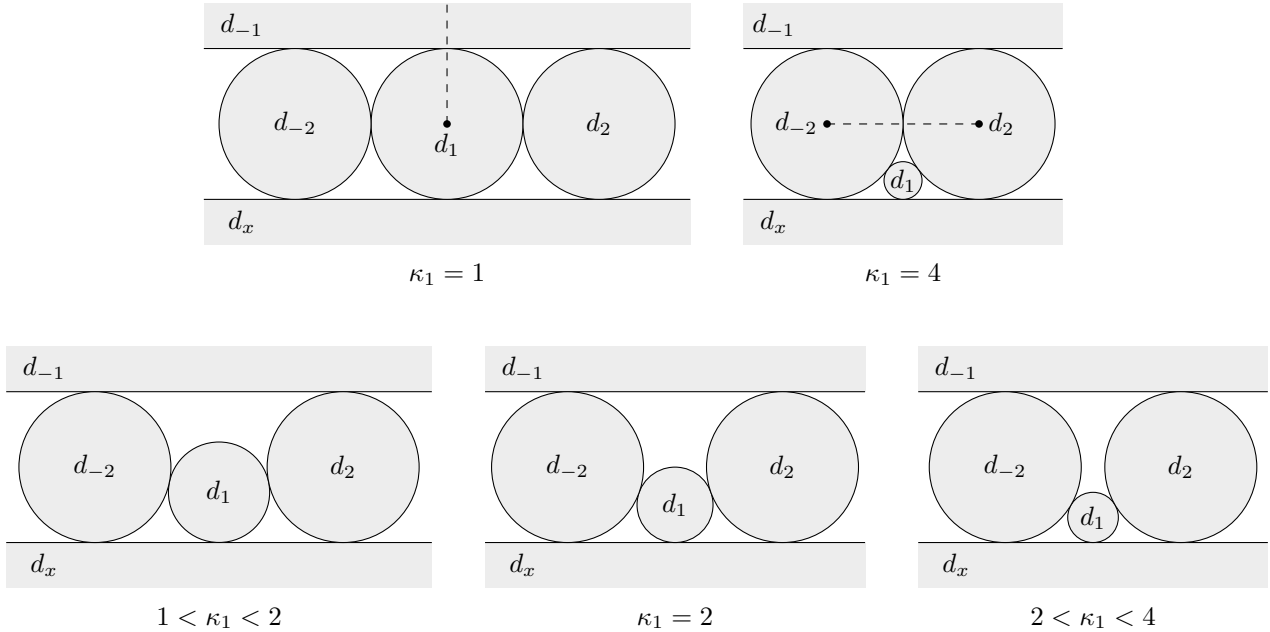
\begin{figure}[H]
\centering

\begin{tikzpicture}[scale=1]
\begin{scope}[scale=1]
\begin{scope}[xshift=-3cm,scale=1]
\fill [mylgray]  (-3.2,1) rectangle (3.2,1.6) node[black] at (-2.7,1.3) {$d_{-1}$};
\draw (-3.2,1) -- (3.2,1);
\fill [mylgray]  (-3.2,-1) rectangle (3.2,-1.6) node[black] at (-2.7,-1.3) {$d_x$};
\draw (-3.2,-1) -- (3.2,-1);

\draw[black,fill=mylgray] (0,0) circle (1cm) node[anchor= north] {$d_1$};
\draw[black,fill=mylgray] (2,0) circle (1cm) node {$d_2$};
\draw[black,fill=mylgray] (-2,0) circle (1cm) node {$d_{-2}$};

\node at (0,0) [circle,fill=black,inner sep=0pt,minimum size=.1cm]  {};

\draw[dashed] (0,0) -- (0,1.6);

\node at (0,-2) {$\kappa_1=1$};

\end{scope}

\begin{scope}[xshift=3cm,scale=1]

\fill [mylgray]  (-2.1,1) rectangle (2.1,1.6) node[black] at (-1.7,1.3) {$d_{-1}$};
\draw (-2.1,1) -- (2.1,1);
\fill [mylgray]  (-2.1,-1) rectangle (2.1,-1.6) node[black] at (-1.7,-1.3) {$d_{x}$};
\draw (-2.1,-1) -- (2.1,-1);

\draw[black,fill=mylgray] (0,-.75) circle (.25cm) node {$d_{1}$};
\draw[black,fill=mylgray] (1,0) circle (1cm) node[anchor=west] {$d_{2}$};
\draw[black,fill=mylgray] (-1,0) circle (1cm) node[anchor=east] {$d_{-2}$};
\node at (-1,0) [circle,fill=black,inner sep=0pt,minimum size=.1cm]  {};
\node at (1,0) [circle,fill=black,inner sep=0pt,minimum size=.1cm]  {};
\draw[dashed] (-1,0) -- (1,0);
\node at (0,-2) {$\kappa_1=4$};
\end{scope}
\end{scope}

\begin{scope}[yshift=-4.55cm]

\begin{scope}[xshift=-6cm,scale=1]
\fill [mylgray]  (-2.8,1) rectangle (2.8,1.6) node[black] at (-2.3,1.3) {$d_{-1}$};
\draw (-2.8,1) -- (2.8,1);
\fill [mylgray]  (-2.8,-1) rectangle (2.8,-1.6) node[black] at (-2.3,-1.3) {$d_x$};
\draw (-2.8,-1) -- (2.8,-1);
\draw[black,fill=mylgray] (0,-.333) circle (.666cm) node {$d_{1}$};
\draw[black,fill=mylgray] (1.633,0) circle (1cm) node {$d_{2}$};
\draw[black,fill=mylgray] (-1.633,0) circle (1cm) node {$d_{-2}$};
\node at (0,-2) {$1<\kappa_1<2$};
\end{scope}

\begin{scope}[scale=1]
\fill [mylgray]  (-2.5,1) rectangle (2.5,1.6) node[black] at (-2,1.3) {$d_{-1}$};
\draw (-2.5,1) -- (2.5,1);
\fill [mylgray]  (-2.5,-1) rectangle (2.5,-1.6) node[black] at (-2,-1.3) {$d_x$};
\draw (-2.5,-1) -- (2.5,-1);
\draw[black,fill=mylgray] (0,-.5) circle (.5cm) node {$d_{1}$};
\draw[black,fill=mylgray] (1.414,0) circle (1cm) node {$d_{2}$};
\draw[black,fill=mylgray] (-1.414,0) circle (1cm) node {$d_{-2}$};
\node at (0,-2) {$\kappa_1=2$};
\end{scope}

\begin{scope}[xshift=5.5cm,scale=1]
\fill [mylgray]  (-2.3,1) rectangle (2.3,1.6) node[black] at (-1.8,1.3) {$d_{-1}$};
\draw (-2.3,1) -- (2.3,1);
\fill [mylgray]  (-2.3,-1) rectangle (2.3,-1.6) node[black] at (-1.8,-1.3) {$d_x$};
\draw (-2.3,-1) -- (2.3,-1);
\draw[black,fill=mylgray] (0,-.667) circle (.333cm) node {$d_{1}$};
\draw[black,fill=mylgray] (1.155,0) circle (1cm) node {$d_{2}$};
\draw[black,fill=mylgray] (-1.155,0) circle (1cm) node {$d_{-2}$};
\node at (0,-2) {$2<\kappa_1<4$};
\end{scope}

\end{scope}

\end{tikzpicture}  
   \caption{Extreme cases with an extra edge (top figures) and the equivalence classes of $\mathcal{M}^2(\boxtimes)$ (bottom figures).}
   \label{fig:pyrak}
\end{figure}

Pyramidal disk packings are one of the main ingredients for  constructing the desired necklace. Since $\boxtimes$ is not Möbius rigid all the properties and the added structures must be carefully verified in each equivalence class of $\mathcal{M}^2(\boxtimes)$. To this end, in the same flavour as in the above proof, we define for every $i=1,2,-1,-2$, the \textit{standard curvatures} of a pyramidal disk packing $\mathcal{D}_\boxtimes$ the numbers $1<\kappa_i<4$ corresponding to the curvature of the disk $d_i$ in  $[\mathcal{D}_\boxtimes]^{-i}_x$. The standard curvatures can be used to identify the equivalence class of $\mathcal{D}_\boxtimes$ in $\mathcal{M}^2(\boxtimes)$. We define also the \textit{smaller standard curvature}  $\kappa:=\min\{\kappa_1,\kappa_2,\kappa_{-1},\kappa_{-2}\}$.  We notice that a pyramidal disk packing is a subset of an octahedral disk packing if and only if $\kappa=\kappa_1=\kappa_2=\kappa_{-1}=\kappa_{-2}=2$.\\

We define a \textit{pyramidal disk system} as the collection of disks $(\mathcal{D}_\boxtimes,d_1^*,d_2^*,d_t)$ formed by
\begin{itemize}
\item[•]$\mathcal{D}_\boxtimes=\{d_x,d_1,d_2,d_{-1},d_{-2}\}$: a disk-packing with tangency graph $\boxtimes$.
\item[•]The \textit{mirror disks} $d_1^*$ and $d_2^*$ where $d_{1}^*$ is the disk orthogonal to $d_2$, $d_{-2}$, $d_x$ and $d_1\subset d_1^*$; 
$d_{2}^*$  is the disk orthogonal to $d_1$, $d_{-1}$, $d_x$ and 
$d_2\subset d_2^*$.
\item[•]The \textit{tangency disk} $d_t$: the disk whose boundary passes through all the tangency points $d_1\cap d_2$, $d_1\cap d_{-2}$, $d_{-1}\cap d_2$ and $d_{-1}\cap d_{-2}$ and $d_x\subset d_t$.
\end{itemize}

\begin{lem}For any pyramidal disk packing the mirror disks and the tangency disk are well-defined.
\end{lem}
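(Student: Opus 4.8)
The plan is to prove existence and uniqueness of the two mirror disks $d_1^*$, $d_2^*$ and the tangency disk $d_t$ by working in a convenient standard position, where the geometry becomes explicit. By the symmetry of the construction (swapping the roles of the indices $1$ and $2$), it suffices to treat $d_1^*$ and $d_t$; the case of $d_2^*$ is identical after relabeling. The key tool will be the Lorentzian model: a disk orthogonal to a prescribed collection of disks corresponds to a space-like vector orthogonal (in the Lorentzian product) to the corresponding Lorentzian vectors, and existence/uniqueness of such a disk reduces to a linear-algebra statement about the orthogonal complement of a set of vectors in $\ed$ together with the normalization $\langle v,v\rangle=1$.

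First I would put $\mathcal{D}_\boxtimes$ in the standard position $[\mathcal{D}_\boxtimes]^{-1}_x$ described in the proof of Proposition \ref{prop;boxcongru}, so that $d_{-1}=\{y\ge 1\}$, $d_x=\{y\le -1\}$ are half-spaces, $d_2$, $d_{-2}$ are unit disks tangent to both half-spaces, and $d_1$ has curvature $\kappa_1\in(1,4)$. For the mirror disk $d_1^*$: I seek a disk orthogonal to the three disks $d_2$, $d_{-2}$, $d_x$. Orthogonality to the half-space $d_x=\{y\le-1\}$ forces $d_1^*$ to be a half-space or a disk centered on the line $y=-1$; orthogonality to $d_2$ and $d_{-2}$ (two congruent unit disks placed symmetrically about the $y$-axis) pins down $d_1^*$ uniquely as the disk centered at the midpoint of their centers with radius determined by the orthogonality relation $\langle d_1^*,d_2\rangle=0$, i.e. $\|c-c_2\|^2 = r^2 + 1$. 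One then checks that this disk genuinely contains $d_1$, using $1<\kappa_1<4$ (this inequality is exactly what guarantees $d_1$ sits inside the strip in the right position, so that $d_1\subset d_1^*$ rather than $d_1$ meeting $\partial d_1^*$). Equivalently, in Lorentzian terms, $v_{d_1^*}$ spans the (one-dimensional, once we fix the sign by $d_1\subset d_1^*$) space-like part of $\{v_{d_2},v_{d_{-2}},v_{d_x}\}^\perp$; one must verify this orthogonal complement is nondegenerate of the right signature, which follows since $\{v_{d_2},v_{d_{-2}},v_{d_x}\}$ together with $v_{d_1}$ (or $v_{d_{-1}}$) is linearly independent in the $4$-dimensional space $\mathbb{R}^{2+1,1}$ — the tangency graph being $\boxtimes$, hence the disks being in "general position". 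For the tangency disk $d_t$: I would observe that the four tangency points $d_1\cap d_2$, $d_1\cap d_{-2}$, $d_{-1}\cap d_2$, $d_{-1}\cap d_{-2}$ are (generically) four distinct points, and by the symmetry of the standard picture about the $y$-axis they are symmetric in pairs; a circle through four points exists and is unique as soon as no three are collinear, which again one reads off from $1<\kappa_1<4$. The condition $d_x\subset d_t$ then fixes the orientation (sign of $v_{d_t}$) uniquely.

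The main obstacle I expect is not the existence of the circles or half-spaces in the abstract — that is cheap — but verifying the \emph{containment/orientation} conditions ($d_1\subset d_1^*$, $d_2\subset d_2^*$, $d_x\subset d_t$) are simultaneously satisfiable and pick out a unique normalized Lorentzian vector, and doing so uniformly over the whole interval $1<\kappa_1<4$ since $\boxtimes$ is not Möbius rigid (Proposition \ref{prop;boxcongru}). Concretely one must rule out the degenerate configurations — three tangency points becoming collinear, or the candidate orthogonal disk degenerating so that $d_1$ ends up tangent to or crossing $\partial d_1^*$ instead of nested inside it. The cleanest way to handle this uniformly is to exploit that the mirror and tangency disks are defined by Möbius-invariant orthogonality/incidence conditions: compute them once in the standard form $[\mathcal{D}_\boxtimes]^{-i}_x$ as explicit functions of the single parameter $\kappa_i\in(1,4)$ via the inversive-coordinate formula $\langle b,b'\rangle = v_b^T Q v_{b'}$, check directly that the relevant $3\times 3$ or $4\times 4$ linear systems have the right rank and that the resulting disk has the correct curvature sign throughout the open interval, and then transport back by the standard transformation. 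Since being orthogonal and passing through a point are preserved by $\mathsf{M\ddot{o}b}(\widehat{\mathbb{R}^2})$, well-definedness in one representative gives it in all of them.
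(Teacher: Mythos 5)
Your proposal follows essentially the same route as the paper: normalize to the standard position $[\mathcal{D}_\boxtimes]^{-1}_x(\kappa_1)$, identify the mirror disks and the tangency disk explicitly from the orthogonality/incidence conditions uniformly for $\kappa_1\in(1,4)$, fix orientations by the containment conditions, and transport back using the Möbius invariance of all the defining conditions. Two small slips to repair in the write-up: the centre of $d_1^*$ is not the midpoint $(0,0)$ of the centres of $d_2$ and $d_{-2}$ but the point $(0,-1)$ where their perpendicular bisector meets the boundary line of $d_x$ (a disk centred at $(0,0)$ would not be orthogonal to the half-space $d_x$); and four points with no three collinear need not be concyclic --- the reason the circle through three of the tangency points also contains the fourth is exactly the reflection symmetry about the $y$-axis that you mention in passing, so that symmetry, not general position, is what should carry the argument for $d_t$.
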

\begin{proof}
We first prove the Lemma for the standard $[\mathcal{D}_\boxtimes]^{-1}_x(\kappa_1)$ which appears in the Figure \ref{fig:pyramidalsystem}.\\

The orthogonality conditions of $d_1^*$ imply that the boundary of $d_1^*$ must be the circle with center $(0,-1)$ which passes through the tangency point $d_x\cap d_2$. The orientation of the interior is determined by the condition $d_1\subset d_1^*$.\\

For $d_2^*$, a disk orthogonal to $d_1$, $d_{-1}$ and $d_x$ must be a half-space with $y$-axis as the boundary. As before, the orientation of the interior comes from the condition $d_2\subset d_2^*$ which gives that $d_2^*$ is the half-space $\{x\geq 0\}$.\\

For $d_t$, by symmetry, the only circle passing through the tangency points $d_1\cap d_2$, $d_1\cap d_{-2}$ and $d_{-1}\cap d_2$ must passes through $d_{-1}\cap d_{-2}$. Again, the orientation is determined by the condition $d_x\subset d_t$.\\

It is clear that the previous arguments works for any standard $[\mathcal{D}_\boxtimes]^{-1}_x(\kappa_1)$ with $1<\kappa_1<4$. Since the conditions defining the mirror disks and the tangency disks are preserved under Möbius transformations, the Lemma is also true for all the pyramidal disk packings which are in the same class of 
$[\mathcal{D}_\boxtimes]^{-1}_x(\kappa_1)$ in $\mathcal{M}^2(\boxtimes)$, for any $1<\kappa_1<4$. As we showed in the proof of Proposition \ref{prop;boxcongru}, these class contains all the pyramidal disk packings.\\

\end{proof}

\begin{figure}[H]
\centering

\begin{tikzpicture}[scale=1.7]
\newcommand\XX{2.3}
\newcommand\YY{2}
\newcommand\YYmin{-2.5}

\fill [darkgray,opacity=\opac]  (-\XX,\YYmin) rectangle (\XX,\YY);
\fill [white]  (0,.667) circle (1.201cm) ;

\fill [darkgray,opacity=\opac]  (-\XX,1) rectangle (\XX,\YY);
\fill [darkgray,opacity=\opac]  (-\XX,-1) rectangle (\XX,\YYmin);
\fill [darkgray,opacity=\opac]  (0,\YYmin) rectangle (\XX,\YY);

\fill[darkgray,opacity=\opac](0,-.667) circle (.333cm);
\fill[darkgray,opacity=\opac] (1.155,0) circle (1cm);
\fill[darkgray,opacity=\opac] (-1.155,0) circle (1cm);
\fill[darkgray,opacity=\opac] (0,-1) circle (1.154cm);

\node at (-.289,-.5) [circle,fill=black,inner sep=0pt,minimum size=.1cm]  {};
\node at (.289,-.5) [circle,fill=black,inner sep=0pt,minimum size=.1cm]  {};
\node at (1.155,1) [circle,fill=black,inner sep=0pt,minimum size=.1cm]  {};
\node at (-1.155,1) [circle,fill=black,inner sep=0pt,minimum size=.1cm]  {};

\draw (0,-.667) circle (.333cm) node[anchor= north east, inner sep=2pt]  {$d_1$};
\draw (1.155,0) circle (1cm) node[anchor=west,inner sep=1.1cm] {$d_{2}$};
\draw (-\XX,1) -- (\XX,1) node at (-1.8,1.2) {$d_{-1}$};
\draw (-1.155,0) circle (1cm) node at (-1.8,0) {$d_{-2}$};
\draw (-\XX,-1) -- (\XX,-1) node at (-1.8,-1.2) {$d_{x}$};
\draw[dashed] (0,-1) circle (1.154cm) node at (-.7,-1.7) {$d_1^*$};
\draw[dashed] (0,\YY) -- (0,\YYmin) node at (.2,-2.35) {$d_{2}^*$};
\draw[dotted]  (0,.667) circle (1.201cm) node at (-.9,1.7) {$d_t$};

\end{tikzpicture}  
   \caption{The pyramidal disk system of a standard $[\mathcal{D}_\boxtimes]_x^{-1}(\kappa_1)$ with the center of $d_1$ lying on the $y$-axis. The boundaries of the mirror disks in dashed and for the tangency disk in dotted. The label of each disk lies on its interior.}
   \label{fig:pyramidalsystem}
\end{figure}
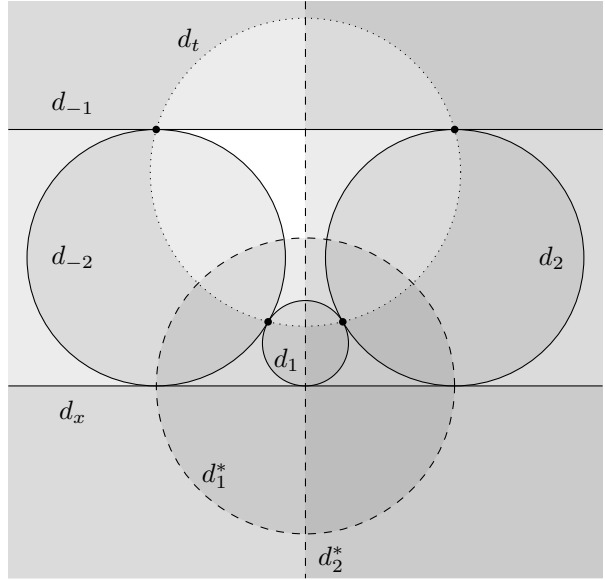 
\begin{table}[H]
$
\small
\begin{array}{l|l|cc|cccc}
\text{Disks}&\text{Curvature ($\delta$)}&\multicolumn{2}{c|}{\text{Center}}&\multicolumn{4}{c}{\text{Inversive coordinates}}\\

\hline
 d_x &0 \hspace{1.35cm} (1) & 0 & -1 & 0 & -1 & 1 & 1 \\
 d_1 & \kappa _1 & 0 & \frac{1}{\kappa _1} -1& 0 & 1-\kappa _1& -1 &\kappa_1-1 \\
 d_2 & 1 & \frac{2}{\sqrt{\kappa _1}} & 0 &
   \frac{2}{\sqrt{\kappa _1}} & 0 & \frac{2}{\kappa _1}-1 &
   \frac{2}{\kappa _1} \\
 d_{-1} & 0 \hspace{1.35cm} (1)  & 0 & 1 & 0 & 1 & 1 & 1 \\
 d_{-2} & 1 & \frac{-2}{\sqrt{\kappa _1}} & 0 & -2
   -\frac{2}{\sqrt{\kappa _1}} & 0 & \frac{2}{\kappa _1}-1 &
   \frac{2}{\kappa _1} \\
   \hline
 d_1^* & \frac{\sqrt{\kappa _1}}{2} & 0 & -1 & 0 &
   \frac{-\sqrt{\kappa _1}}{2} & \frac{-1}{\sqrt{\kappa _1}} &
   \frac{\kappa _1-2}{2 \sqrt{\kappa _1}} \\
 d_2^* & 0 \hspace{1.35cm} (0) & 1 & 0  & 1 & 0 & 0 & 0 \\
 \hline
 d_t & \frac{-\kappa _1}{\sqrt{\kappa_1^2+4}} & 0 & \frac{2}{\kappa _1} & 0 & \frac{-2}{\sqrt{\kappa _1^2+4}} &
   \frac{\kappa _1}{\sqrt{\kappa _1^2+4}} & 0 \\
\end{array}
$
\caption{Curvature, center and inversive coordinates of the disks of the pyramidal disk system in Figure \ref{fig:pyramidalsystem} in terms of the curvature of $d_1$. When a disk is a half-space the algebraic distance is given in brackets and the coordinates of the center are the coordinates of the normal vector.}
\label{tab:pyramid}
\end{table}

Given the inversive coordinates of Table \ref{tab:pyramid} we may compute the inversive products of the disks of a pyramidal disk system for each equivalence class of $\mathcal{M}^2(\boxtimes)$ in terms of the standard curvatures. 
\begin{lem}\label{lem:relations}
The following relations hold for every pyramidal disk system $(\mathcal{D}_\boxtimes,d_1^*,d_2^*,d_t)$ and for every $i=1,2$:
\begin{enumerate}[label=(\alph*),itemsep=5pt]
\item \label{lem:relations;ltok}$\langle d_i,d_{-i}\rangle=-1-2\kappa_i=-1-\frac{8}{\kappa_j}$ with $i\not=j$.
\item \label{lem:relations;ki}$\kappa_i=\kappa_{-i}$.
\item \label{lem:relations;k1k2}$\kappa_1\kappa_2=4$.
\item $-7<\langle d_i,d_{-i}\rangle<-1$.
\item $(1-\langle d_1,d_{-1}\rangle)(1-\langle d_2,d_{-2}\rangle)=16$.
\item \label{lem:relations;tangencydisk} $\partial d_t\subset d_1\cup d_2\cup d_{-1}\cup d_{-2}$. 
\item $d_1^*$, $d_2^*$ and $d_t$ are mutually orthogonal. 
\item \label{lem:relations;mirrors}
$\sigma_{d_i^*}(d_j)=
\left\lbrace\begin{array}{lll}
d_{-j}&\text{ if }i=|j|\\

d_j&\text{ otherwise} \\
\end{array}\right.$ for every $j\in\{1,2,-1,-2,t\}$.

\end{enumerate} 
\end{lem}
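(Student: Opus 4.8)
The plan is to reduce all eight items to explicit computations with the inversive coordinates of Table~\ref{tab:pyramid}. Every quantity occurring in \ref{lem:relations;ltok}--\ref{lem:relations;mirrors} is invariant under the Möbius group: inversive products, and hence orthogonality, are preserved by it; the defining conditions of the mirror disks and of the tangency disk are lists of tangencies, orthogonalities and inclusions, so a Möbius transformation carries a pyramidal disk system to a pyramidal disk system; and the standard curvatures $\kappa_1,\kappa_2,\kappa_{-1},\kappa_{-2}$ are Möbius invariant by definition. Hence it suffices to verify everything on one representative of each class of $\mathcal{M}^2(\boxtimes)$. By Proposition~\ref{prop;boxcongru} and its proof, every class is represented by a standard packing $[\mathcal{D}_\boxtimes]^{-1}_x(\kappa_1)$ with $1<\kappa_1<4$, which after a Euclidean isometry can be put in the normalized position of Figure~\ref{fig:pyramidalsystem}; the preceding Lemma then produces its mirror and tangency disks, whose curvatures, centres and inversive coordinates are those recorded in Table~\ref{tab:pyramid}.

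Granting the table, the items \ref{lem:relations;ltok} (for $i=1$), (d), (e), (f), (g) and \ref{lem:relations;mirrors} become finite calculations. For \ref{lem:relations;ltok} with $i=1$ and for (g) one simply evaluates $\langle b,b'\rangle=v_b^{T}Q\,v_{b'}$ from \eqref{eq:invproduct} on the relevant pairs and simplifies in the single parameter $\kappa_1$: $\langle d_1,d_{-1}\rangle$ comes out as the stated rational expression in $\kappa_1$, and the orthogonality of $d_1^{*},d_2^{*},d_t$ amounts to three vanishing products. Item (d) then follows from $1<\kappa_1<4$, and (e) by combining \ref{lem:relations;ltok} and \ref{lem:relations;k1k2}. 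For (f) one uses that $d_t$ was defined to pass through the four tangency points $d_{\pm1}\cap d_{\pm2}$, so $\partial d_t$ meets $d_1\cup d_2\cup d_{-1}\cup d_{-2}$ exactly in those four points, and it only remains to check that each of the four arcs of $\partial d_t$ that they delimit lies in the corresponding disk, which is visible from the coordinates. For \ref{lem:relations;mirrors} one applies the Lorentzian reflection $\sigma_{v_{d_i^{*}}}\colon u\mapsto u-2\langle u,v_{d_i^{*}}\rangle v_{d_i^{*}}$ from \eqref{eq:lorenref} (the image of the inversion $\sigma_{d_i^{*}}$ under the isomorphism of \eqref{isogroups}) to each $v_{d_j}$, $j\in\{1,2,-1,-2,t\}$: when $j\notin\{i,-i\}$ one has $\langle d_i^{*},d_j\rangle=0$ by the definition of the mirror disks together with (g), so $d_j$ is fixed, while for $j=\pm i$ a direct substitution gives $\sigma_{v_{d_i^{*}}}(v_{d_i})=v_{d_{-i}}$; for $i=2$ this is merely the Euclidean reflection in $d_2^{*}=\{x\ge 0\}$.

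The part needing more care is \ref{lem:relations;ki}, \ref{lem:relations;k1k2} and the second equality in \ref{lem:relations;ltok}, which compare the four standard curvatures and so force one to move between the standard forms $[\mathcal{D}_\boxtimes]^{-i}_x$. To obtain $\kappa_1\kappa_2=4$ I would take a standard transformation $[\mathcal{D}_\boxtimes]^{-1}_x(\kappa_1)\mapsto[\mathcal{D}_\boxtimes]^{-2}_x$; the image is again a standard pyramidal packing, in which $d_2$ is the disk of curvature $\kappa_2$ and, via the automorphism of $\boxtimes$ exchanging the two diagonals of the base square, occupies the structural role that $d_1$ had before. Thus, by the $i=1$ case of \ref{lem:relations;ltok} applied there, $\langle d_2,d_{-2}\rangle$ equals the same rational expression evaluated at $\kappa_2$; computing $\langle d_2,d_{-2}\rangle$ directly from Table~\ref{tab:pyramid} gives a second expression in $\kappa_1$, and equating the two and simplifying yields $\kappa_1\kappa_2=4$ and, with it, the second equality in \ref{lem:relations;ltok}. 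For \ref{lem:relations;ki}, the reflection of the normalized packing in the axis through the centre of $d_1$ is a Euclidean isometry swapping $d_2\leftrightarrow d_{-2}$ and fixing the two half-spaces, which gives $\kappa_2=\kappa_{-2}$; applying the same device to $[\mathcal{D}_\boxtimes]^{-2}_x$, or transforming to $[\mathcal{D}_\boxtimes]^{1}_x$ and tracking the curvature of $d_{-1}$, yields $\kappa_1=\kappa_{-1}$.

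The main obstacle is essentially bookkeeping: across the several standard forms one must keep track of which concrete disk plays which structural role and fix the Euclidean normalization correctly, so that each identity can be read off as a one-variable algebraic simplification from Table~\ref{tab:pyramid}. Once this dictionary is in place, no step is more than a routine computation.
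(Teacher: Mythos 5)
Your proposal is essentially the paper's own proof: the paper disposes of all eight items in one line (``simple calculations combining equation (\ref{eq:invproduct}) and the inversive coordinates of Table \ref{tab:pyramid}''), and what you have done is supply the scaffolding that line presupposes --- Möbius invariance of every quantity involved, reduction to the standard representative via Proposition \ref{prop;boxcongru}, and the bookkeeping between the standard forms $[\mathcal{D}_\boxtimes]^{-i}_x$ needed to compare the four standard curvatures --- all of which is correct. One caveat: if you actually carry out the computation for \ref{lem:relations;ltok} from Table \ref{tab:pyramid} you get $\langle d_1,d_{-1}\rangle = 1-2\kappa_1$ and $\langle d_2,d_{-2}\rangle = 1-8/\kappa_1$, not $-1-2\kappa_i$; the statement of \ref{lem:relations;ltok} carries a sign typo (the value $1-2\kappa_i$ is the one consistent with items (d), (e) and with the formula $\kappa=\frac{1-\lambda_{c,-c}}{2}$ used in Section \ref{sec:alg}), so your claim that the table ``comes out as the stated rational expression'' should be amended rather than taken at face value. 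Also, in (f) you should say that $\partial d_t$ meets the \emph{boundaries} $\partial d_{\pm1},\partial d_{\pm2}$ only at the four tangency points, not that it meets the union of the disks only there; the rest of your arc-by-arc argument is then as intended.
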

\begin{proof}
The relations can be obtained by simple calculations (combining equation (\ref{eq:invproduct}) and the inversive coordinates given in Table \ref{tab:pyramid}).
\end{proof}
The equalities \ref{lem:relations;ltok}, \ref{lem:relations;ki} and \ref{lem:relations;k1k2} tell us that a pyramidal disk packing has essentially two different standard curvatures $\kappa_1$ and $\kappa_2$ which are inversely proportional and the smaller standard curvature must verify $1<\kappa\le2$. We define the \textit{closest disjoint pair} of $\mathcal{D}_\boxtimes$ the disjoint pair $\{d_i,d_{-i}\}$ satisfying $\kappa=\kappa_i$, $i=1$ or $2$. The other disjoint pair will be called \textit{the farthest disjoint pair}. 
In the following we use the indices $\{d_c,d_{-c}\}$ and $\{d_f,d_{-f}\}$ with $\{c,f\}=\{1,2\}$ and  $c\not= f$ to denote the closest and the farthest disjoint pair of  $\mathcal{D}_\boxtimes$. By convention, we define $c=1$ and $f=2$ when $\kappa_1=\kappa_2$.

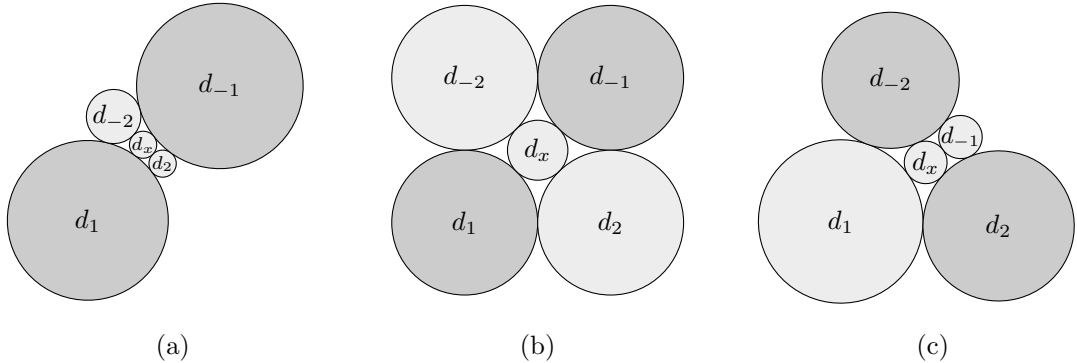
\begin{figure}[H]\label{fig:closefar}
\centering

\begin{tikzpicture}[scale=.8]
\begin{scope}[xshift=-6cm,scale=.45,rotate=220]
\draw[black,fill=mydgray] (4.38461, 0.389066) circle (2.94164cm) node {$d_1$};
\draw[black,fill=mylgray] (0.947749, 0.546203) circle (0.499cm) node {\scriptsize $d_2$};
\draw[black,fill=mydgray] (-2.49351, -0.3) circle (3.045cm) node {$d_{-1}$};
\draw[black,fill=mylgray] (1.20387, -1.934) circle (.997cm) node {$d_{-2}$};

\draw[black,fill=mylgray] (1.04725, -0.446) circle (.498691cm) node {\scriptsize $d_x$};
\end{scope}

\begin{scope}[scale=1.2,yshift=-0.2cm]
\coordinate (OO) at (0,-1);

\draw[black,fill=mydgray] \NE circle (1cm) node {$d_{-1}$};
\draw[black,fill=mylgray] \NW circle (1cm) node {$d_{-2}$};
\draw[black,fill=mydgray] \SW circle (1cm) node {$d_{1}$};
\draw[black,fill=mylgray] \SE circle (1cm) node {$d_{2}$};
\draw[black,fill=mylgray] (0,0) circle (.414cm) node {$d_{x}$};

\end{scope}

\begin{scope}[xshift=6cm,scale=.7,rotate=15]
\draw[black,fill=mylgray] (-1.927, -1.585) circle (1.934cm) node {$d_{1}$};
\draw[black,fill=mydgray] (1.629, -2.647) circle (1.777cm) node {$d_2$};
\draw[black,fill=mylgray] (1.308, -0.382) circle (.511cm) node {\small $d_{-1}$};
\draw[black,fill=mydgray] (0.071, 1.34) circle (1.611cm) node {$d_{-2}$};
\draw[black,fill=mylgray] (0.363, -0.751) circle (.503cm) node {$d_x$};
\end{scope}

\node at (-6,-3.5) {(a)};
\node at (0,-3.5) {(b)};
\node at (6.5,-3.5) {(c)};

\end{tikzpicture}  
   \caption{The closest disjoint pairs in darker gray in three different cases:\\
    (a) $\kappa=\kappa_1=1.33$; (b) $\kappa=\kappa_1=\kappa_2=2$; (c) $\kappa=\kappa_2=1.6$.}
\end{figure}

\subsection{The crossing ball system}
The main strategy for the proof of our main result is to construct a local ball packing around each crossing of the given diagram. We then stick together the local ball packings of two consecutive crossings. These local packings must take into account which piece of the curve goes over/under the other at each crossing of the link diagram. To this end, we may introduce {\em crossing ball systems} which are made from the blowing-up of a pyramidal disk system. There will be an over/under choice which is determined by a signed parameter $\epsilon\in\{+,-\}$.
\begin{rem}\label{rem:blowinv}The blowing up operation preserves the inversive product. 
\end{rem}
A \textit{pyramidal ball packing} $\mathcal{B}_\boxtimes=\{b_x,b_1,b_2,b_{-1},b_{-2}\}$ is a ball packing obtained by blowing up a pyramidal disk packing. We define equivalently the closest and farthest disjoint pairs as in the planar case. Let $(\mathcal{D}_\boxtimes,d^*_1,d_2^*,d_t)$ be a pyramidal disk system. We define for every $\epsilon\in\{+,-\}$, a \textit{crossing ball system} $(\mathcal{B}_\boxtimes,b_1^*;b_2^*,b_t,b_{\epsilon3},b_{\epsilon3}')$ as the arrangement of balls formed by:
\begin{itemize}
\item The \textit{pyramid ball packing} $\mathcal{B}_\boxtimes$ : the blowing up of $\mathcal{D}_\boxtimes$.
\item The \textit{mirror balls} $b_1^*$ and $b_2^*$: the blowing up of the mirror disks $d_1^*$ and $d_2^*$ respectively.
\item The \textit{tangency ball} $b_t$: the blowing up of the tangency disk $d_t$.
\item The \textit{bridge balls} $b_{\epsilon3}$ and $b_{\epsilon3}'$ where:
\begin{itemize}
\item[(i)]$b_{\epsilon3} $ is the unique ball externally tangent to $b_c$, $b_f$, $b_x$, internally tangent to $b_c^*$ and contained in the half-space $\{\epsilon z\geq 0\}$, where $\{b_c,b_{-c}\}$ and $\{b_f,b_{-f}\}$ denotes the closest and the farthest pair of $\mathcal{B}_\boxtimes$.  
\item[(ii)]$b_{\epsilon3}'$ is the ball obtained by the inversion of $b_{\epsilon3}$ on the mirror ball $b_c^*$.
\end{itemize}

\end{itemize}

We also define the \textit{crossing region} $\mathcal{R}$ of a crossing ball system as $\mathcal{R}=\left(\underset{b\in\mathcal{B}_\boxtimes}{\bigcap} -b\right) \cap b_t.$

\begin{figure}[H]
\centering

\begin{tikzpicture}[scale=.8]
\begin{scope}[xshift=-6cm,yshift=-.3cm,scale=.45,rotate=0]
\node at (0,0) {\includegraphics[scale=.35]{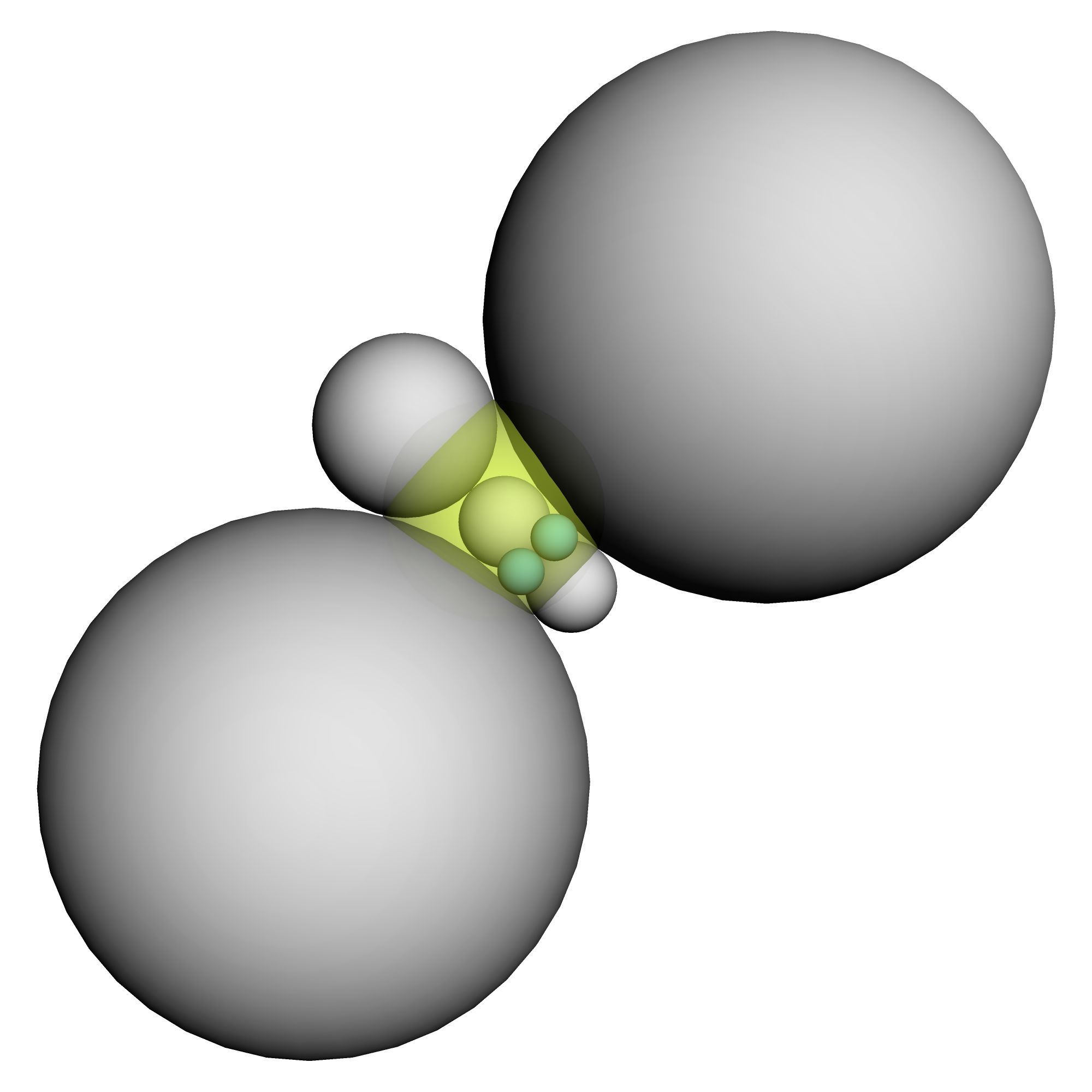}};
\node at (-2.7,-2.6) {$b_{1}$};
\node at (-1.5,1.5) {$b_{-2}$};
\node at (2.7,2.5) {$b_{-1}$};
\node at (1.1,-1.1) {$b_{2}$};
\node at (-.8,.3) {$\mathcal{R}$};
\node at (-.1,-.4) {\scriptsize $b_{3}$};
\node at (.2,0.1) {\scriptsize $b_3'$};

\end{scope}

\begin{scope}[scale=1.2,yshift=-0.2cm]
\coordinate (OO) at (0,-1);

\node at (0,0) {\includegraphics[scale=.35]{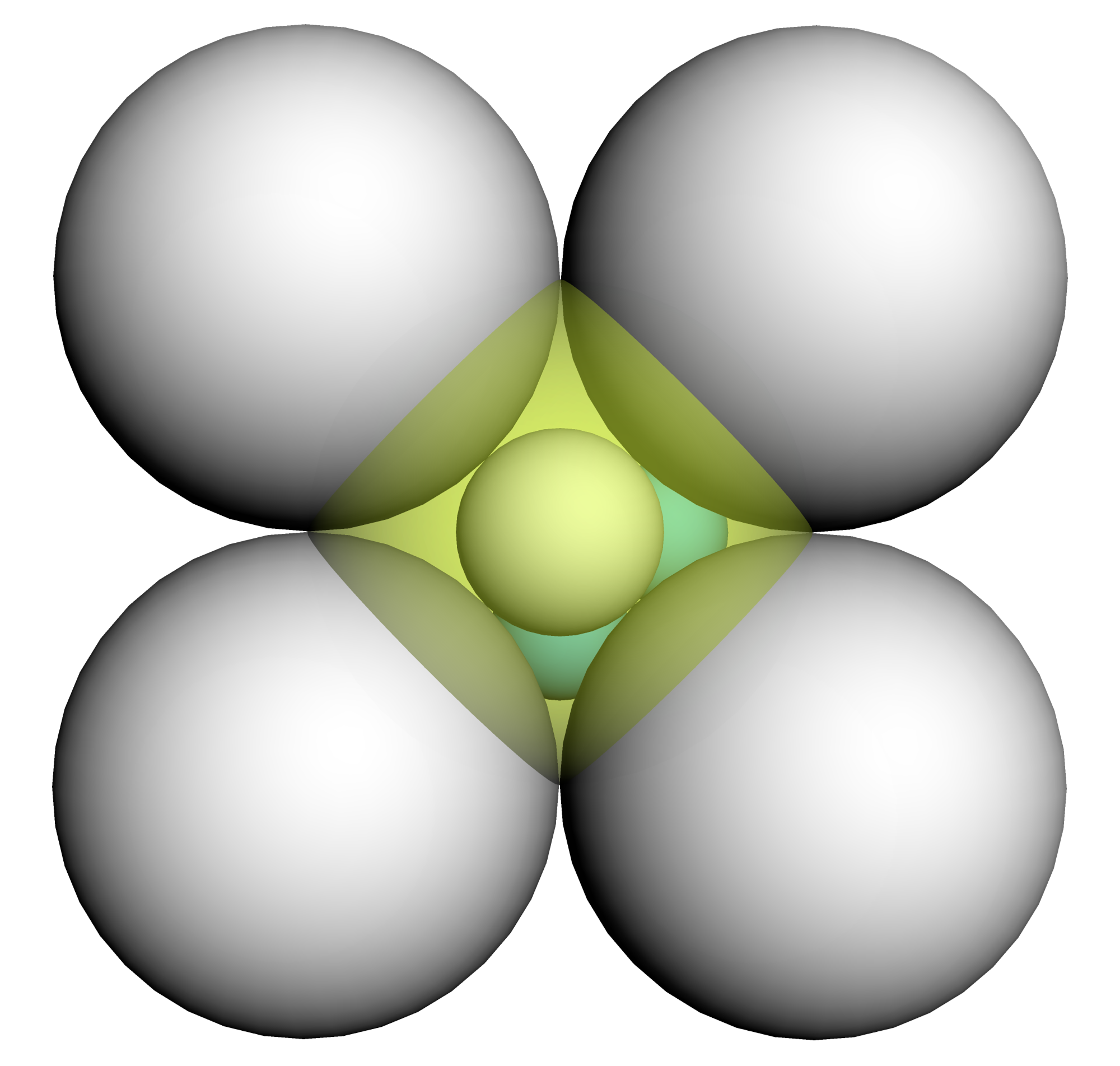}};
\node at \NE {$b_{-1}$};
\node at \NW {$b_{-2}$};
\node at \SW {$b_{1}$};
\node at \SE {$b_{2}$};
\node at (0,0.55) {$\mathcal R$};
\node at (.6,0) {\scriptsize $b_{-3}'$};
\node at (0.1,-.5) {\scriptsize $b_{-3}$};

\end{scope}

\begin{scope}[xshift=6.5cm,yshift=-.4cm,scale=.7]
\node at (0,0) {\includegraphics[scale=.37]{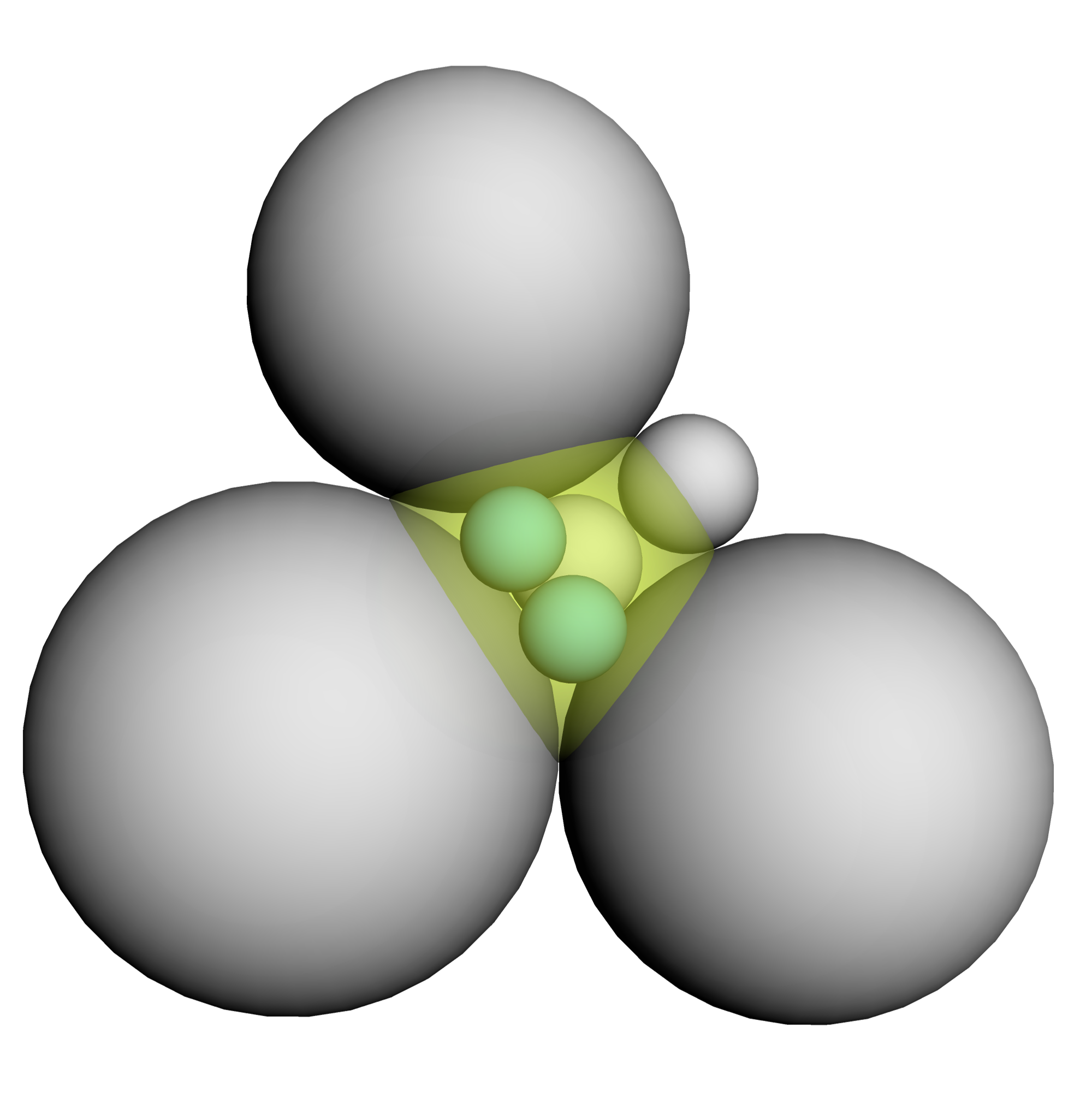}};
\node at (-1.8,-1.55) {$b_{1}$};
\node at (2.1,-1.6) {$b_{2}$};
\node at (1.4,.7) {$b_{-1}$};
\node at (-.6,1.8) {$b_{-2}$};

\node at (0.6,0.1) {$\mathcal{R}$};
\node at (-.2,0.1) {\small $b_{3}'$};
\node at (0.25,-.5) {\small $b_{3}$};
\end{scope}

\node at (-6,-3.5) {\small (a)};
\node at (0,-3.5) {\small(b)};
\node at (6.5,-3.5) {\small(c)};

\end{tikzpicture}  
   \caption{The pyramidal ball packing, bridge balls (blue) and crossing region (yellow) of three crossing ball systems seen from above:
    (a) $\kappa=\kappa_1=1.33$, $\epsilon=+$; (b) $\kappa=\kappa_1=\kappa_2=2$, $\epsilon=-$; (c) $\kappa=\kappa_2=1.6$, $\epsilon=+$.}
    \label{fig:xsystem}
\end{figure} 

\newpage
\begin{lem}\label{region} Let $(\mathcal{B}_\boxtimes,b_1^*,b_2^*,b_t,b_{\epsilon3},b_{\epsilon3}')$ be a crossing ball system. The bridge balls $b_{\epsilon3}$ and $b_{\epsilon3}'$ are well-defined for every $\epsilon\in\{+,-\}$. Moreover, they are externally tangent and both are contained in the crossing region $\mathcal{R}$. 
\end{lem}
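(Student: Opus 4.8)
The plan is to use M\"obius-invariance to reduce to one normal form and then work entirely in the Lorentzian model $\ed$ with $d=3$. Every assertion of the lemma --- that $b_{\epsilon3}$ exists and is unique, that $b_{\epsilon3}$ and $b_{\epsilon3}'$ are externally tangent, and that both lie in $\mathcal{R}$ --- is invariant under the M\"obius Group, which preserves inversive products, tangencies and inclusions; so by Proposition \ref{prop;boxcongru} and Lemma \ref{lem:relations}\ref{lem:relations;ki}--\ref{lem:relations;k1k2} I may assume $\mathcal{B}_\boxtimes$ is the blowing up of the standard disk packing $[\mathcal{D}_\boxtimes]^{-c}_x(\kappa_c)$ of Table \ref{tab:pyramid}, with $c$ the index of the closest disjoint pair, so that $1<\kappa_c\le 2$ and all inversive coordinates are explicit. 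The structural point is that blowing up places the Lorentzian vectors of every ball of $\mathcal{B}_\boxtimes$ and of $b_c^*,b_f^*,b_t$ inside the hyperplane $e_3^\perp$ (for the fixed orthonormal basis $\mathcal{B}_0=\{e_1,\dots,e_5\}$); equivalently these balls are invariant under the reflection $\rho:u\mapsto u-2\langle u,e_3\rangle e_3$, which on $\widehat{\mathbb{R}^3}$ is the Euclidean reflection across $\{z=0\}$.

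Next I would establish the existence and uniqueness of $b_{\epsilon3}$ by linear algebra. The Gramian of $\{b_c,b_f,b_x,b_c^*\}$ is combinatorial except for the entry $\langle b_c,b_c^*\rangle=\sqrt{\kappa_c}$ read off Table \ref{tab:pyramid}, and its determinant is $-4\ne 0$; hence these four Lorentzian vectors form a basis of $e_3^\perp$, and there is a unique $v_0\in e_3^\perp$ with $\langle v_0,b_c\rangle=\langle v_0,b_f\rangle=\langle v_0,b_x\rangle=-1$ and $\langle v_0,b_c^*\rangle=1$. A $d$-ball has exactly the tangencies required of $b_{\epsilon3}$ precisely when its Lorentzian vector $v$ satisfies these four linear equations together with $\langle v,v\rangle=1$; since the $e_3$-coordinate never appears in the linear equations, the orthogonal projection of $v$ onto $e_3^\perp$ must be $v_0$, so $v=v_0+s\,e_3$ with $s^2=1-\langle v_0,v_0\rangle$. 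The decisive computation is $\langle v_0,v_0\rangle<1$ (one obtains $\langle v_0,v_0\rangle=(\sqrt{\kappa_c}-1)^2-3$, which is $<1$ for every admissible $\kappa_c$), giving exactly two candidates $v_0\pm\sqrt{1-\langle v_0,v_0\rangle}\,e_3$, interchanged by $\rho$; from the explicit coordinates each is a genuine solid ball of curvature $1+\sqrt{\kappa_c}$ lying strictly on the side of $\{z=0\}$ fixed by the sign of its $e_3$-coordinate, so exactly one of them lies in $\{\epsilon z\ge 0\}$. This is $b_{\epsilon3}$, with $v_{b_{\epsilon3}}=v_0+\epsilon\sqrt{1-\langle v_0,v_0\rangle}\,e_3$.

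The ball $b_{\epsilon3}'=\sigma_{b_c^*}(b_{\epsilon3})$ is the image of a $d$-ball under a M\"obius transformation, hence a well-defined $d$-ball, and its external tangency with $b_{\epsilon3}$ is a one-line identity: writing $v=v_{b_{\epsilon3}}$ and $w=v_{b_c^*}$, and using that $\sigma_w$ is self-adjoint for the Lorentzian product, $\sigma_w(v)=v-2\langle v,w\rangle w$, and $\langle v,w\rangle=\langle b_{\epsilon3},b_c^*\rangle=1$ by internal tangency,
\[
\langle b_{\epsilon3},b_{\epsilon3}'\rangle=\langle v,\sigma_w(v)\rangle=\langle v,v\rangle-2\langle v,w\rangle^2=1-2=-1 .
\]

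Finally, for the inclusions in $\mathcal{R}=\bigl(\bigcap_{b\in\mathcal{B}_\boxtimes}-b\bigr)\cap b_t$, note that $\sigma_{b_c^*}$ permutes $\mathcal{B}_\boxtimes$ (it swaps $b_c,b_{-c}$ and fixes $b_f,b_{-f},b_x$, by Lemma \ref{lem:relations}\ref{lem:relations;mirrors} together with the orthogonality of $b_c^*$ and $b_x$) and fixes $b_t$, so it preserves $\mathcal{R}$; it therefore suffices to prove $b_{\epsilon3}\subset\mathcal{R}$, whence $b_{\epsilon3}'=\sigma_{b_c^*}(b_{\epsilon3})\subset\mathcal{R}$. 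Since $b_{\epsilon3}$ is externally tangent to $b_c,b_f,b_x$, its interior avoids theirs, so $b_{\epsilon3}\subset -b_c\cap -b_f\cap -b_x$. For the remaining three factors I would expand $\langle b_{\epsilon3},b_{-c}\rangle$, $\langle b_{\epsilon3},b_{-f}\rangle$ and $\langle b_{\epsilon3},b_t\rangle$ linearly in $v_0$ (the $e_3$-terms drop out since $b_{-c},b_{-f},b_t\in e_3^\perp$) and substitute the inversive products of the pyramidal disk system from Lemma \ref{lem:relations} and Table \ref{tab:pyramid}; this yields $\langle b_{\epsilon3},b_{-c}\rangle<-1$, $\langle b_{\epsilon3},b_{-f}\rangle<-1$ and $\langle b_{\epsilon3},b_t\rangle\ge 1$, and then --- using that $b_{\epsilon3}$ is a solid ball of small radius $\tfrac1{1+\sqrt{\kappa_c}}$, so that it can neither be ``co-disjoint'' from $b_{-c},b_{-f}$ nor contain the hollow ball $b_t$ --- these sign conditions force $b_{\epsilon3}\subset -b_{-c}\cap -b_{-f}\cap b_t$, i.e.\ $b_{\epsilon3}\subset\mathcal{R}$. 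The genuinely non-formal points are exactly the inequality $\langle v_0,v_0\rangle<1$ (which is what makes the bridge ball a real $d$-ball rather than an ``imaginary'' solution of the tangency equations) and these last size estimates: an inversive product $<-1$ or $>1$ does not by itself say which spherical region contains which, and I expect this orientation bookkeeping inside $\mathcal{R}$ to be the main obstacle.
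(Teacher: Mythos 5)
Your proposal is correct and follows essentially the same route as the paper: reduce to the standard frame of Table \ref{tab:pyramid}, solve the linear tangency conditions in the Lorentzian model (you use the basis $\{b_c,b_f,b_x,b_c^*\}$ of $e_3^\perp$ plus an explicit $e_3$-component, while the paper uses the five-element basis $\{b_x,b_c,b_f,b_c^*,b_{\epsilon z}\}$ and normalizes the last polyspherical coordinate --- its condition $\lambda_{z,3}=\sqrt{3+2\sqrt{\kappa}-\kappa}>1$ is exactly your $\langle v_0,v_0\rangle=(\sqrt{\kappa_c}-1)^2-3<0$), obtain external tangency from the reflection identity, and verify the same sign conditions on inversive products for membership in $\mathcal{R}$. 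Your only departures are organizational: you transport the containment from $b_{\epsilon3}$ to $b_{\epsilon3}'$ by observing that $\sigma_{b_c^*}$ preserves $\mathcal{R}$ instead of recomputing the products for $b_{\epsilon3}'$ via the mirror relations, and you make explicit the orientation bookkeeping (a small solid ball cannot contain the hollow $b_t$, etc.) that the paper's stated criterion for lying in $\mathcal{R}$ leaves implicit.
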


\begin{proof}

 Consider the collection of balls 
\begin{eqnarray*}
\mathcal{B}=\{b_x,b_c,b_f,b_c^*,b_{\epsilon z}\}
\end{eqnarray*}
 where $\{b_c,b_{-c}\}$ and $\{b_f,b_{-f}\}$ are the closest and the farthest disjoint pair of $\mathcal{B}_\boxtimes$ and $b_{\epsilon z}$ is the half-space $\{\epsilon z\geq0\}$.  
 Since the inversive product is preserved by the blowing up operation, we can compute the Gramian of $\mathcal{B}$ by using the inversive coordinates given in the Table \ref{tab:pyramid} in terms of the smaller standard curvature.
\begin{eqnarray*}\label{gram}
\Gram(\mathcal{B})=
\begin{pmatrix}
1&-1&-1&0&0 \\
-1&1 &-1&\sqrt{\kappa}&0 \\
-1&-1&1 &0&0\\
0&\sqrt{\kappa}&0&1 &0\\
0&0&0&0&1
\end{pmatrix}&\text{ and }&\Gram(\mathcal{B})^{-1}=\frac{1}{2}
\begin{pmatrix}
\frac{\kappa }{2} & -1 & \frac{\kappa }{2}-1 & \sqrt{\kappa } & 0 \\
 -1 & 0 & -1 & 0 & 0 \\
 \frac{\kappa}{2} -1& -1 & \frac{\kappa }{2} & \sqrt{\kappa } & 0 \\
 \sqrt{\kappa } & 0 & \sqrt{\kappa } & 2 & 0 \\
 0 & 0 & 0 & 0 & 2 \\
\end{pmatrix}
\end{eqnarray*}
Since $\det(\Gram(\mathcal{B}))=4\not=0$ the Lorentzian vectors of $\mathcal{B}$ form a basis of $\mathbb{R}^{4,1}$. In order to show that the bridge balls are well-defined we compute the polyspherical coordinates of  $b_{\epsilon3}$ with respect to $\mathcal{B}$ using the definition of $b_{\epsilon3}$ and equation (\ref{eq:lprod}):
\begin{eqnarray}\label{eq:polyb3}
P_{\mathcal{B}}(b_{\epsilon3})=
\begin{pmatrix}
-1 \\
-1 \\
-1\\
1\\
\lambda_{z,3}
\end{pmatrix} \text{ with } \lambda_{z,3}\ge 1.
\end{eqnarray}
By using equation (\ref{eq:polyprod}) we can normalize to get  $\lambda_{z,3}=\sqrt{3+2\sqrt{\kappa}-\kappa}$. 
It can be checked that  $\lambda_{z,3}>1$ for every $1<\kappa\le 2$.
The latter implies the existence and the uniqueness of $b_{\epsilon3}$ and hence for $b_{\epsilon3}':=\sigma_{b_{c}^*}(b_3)$ for every pyramidal ball packing. Moreover, 
\begin{eqnarray*}
\langle b_{\epsilon3},b_{\epsilon3}'\rangle&=&\langle b_{\epsilon3},\sigma_{b_c^*}(b_{\epsilon3})\rangle\\
&=&\langle b_{\epsilon3},b_{\epsilon3}-2\langle b_{\epsilon3},b_c^*\rangle b_c^*\rangle\hspace{2cm}\text{by (\ref{eq:lorenref})}\\
&=&1-2\langle b_{\epsilon3},b_c^*\rangle^2\\
&=&-1
\end{eqnarray*}
so $b_{\epsilon3}$ and $b_{\epsilon3}'$ are externally tangent.\\
A ball $b$ is contained in the crossing region of the crossing ball system $(\mathcal{B}_\boxtimes,b_1^*,b_2^*,b_t,b_{\epsilon3},b_{\epsilon3}')$ if and only if 
\begin{equation}
\langle b_i,b\rangle\le-1\text{ for every }b_i\in\{b_x,b_c,b_f,b_{-c},b_{-f}\}\text{ and }\langle b_t,b\rangle\ge 1.
\end{equation}
By combining the invariance of the inversive product under inversions, the intersection angles of the mirrors and the other balls given in Lemma \ref{lem:relations} \ref{lem:relations;mirrors}  and  the tangency conditions in the definition of $b_{\epsilon3}$ we obtain
\begin{align*}
\langle b_{x}, b_{\epsilon3}'\rangle=&\langle \sigma_{b_c^*}(b_x),\sigma_{b_c^*}(b_{\epsilon3}')\rangle=\langle b_x, b_{\epsilon3}\rangle=-1,\\
\langle b_{-c}, b_{\epsilon3}'\rangle=&\langle \sigma_{b_c^*}(b_{-c}),\sigma_{b_c^*}(b_{\epsilon3}')\rangle=\langle b_c, b_{\epsilon3} \rangle=-1\quad\text{  and}\\
\langle b_{f}, b_{\epsilon3}'\rangle=&\langle \sigma_{b_c^*}(b_{f}),\sigma_{b_c^*}(b_{\epsilon3}')\rangle=\langle b_f, b_{\epsilon3}\rangle=-1.
\end{align*}

\noindent For the rest of inversive products we use Lemma \ref{lem:relations} \ref{lem:relations;mirrors}, equation (\ref{eq:polyprod}) and the inversive coordinates of Table \ref{tab:pyramid} :
\begin{align*}
\langle b_{c},b_{\epsilon3}'\rangle=\langle \sigma_{b_c^*}(b_{c}),\sigma_{b_c^*}(b_{\epsilon3}')\rangle=&\langle b_{-c},b_{\epsilon3}\rangle\\
=&P_{\mathcal{B}}(b_{-c})^T \Gram(\mathcal{B})^{-1}P_{\mathcal{B}}(b_{\epsilon3})\\
=&\begin{pmatrix}
-1&-2\kappa+1&-1&-\sqrt{\kappa}&0
\end{pmatrix}
\Gram(\mathcal{B})^{-1}
\begin{pmatrix}
-1\\
-1\\
-1\\
1\\
\sqrt{3+2\sqrt{\kappa}-\kappa}
\end{pmatrix}\\
=&-1-2\sqrt{\kappa}<-1\quad\text{ for }1<\kappa\le 2.
\end{align*}
By the same procedure we obtain:
\begin{align*}
\langle b_{-f},b_{\epsilon3}'\rangle=&\langle \sigma_{b_c^*}(b_{-f}),\sigma_{b_c^*}(b_{\epsilon3}')\rangle=\langle b_{-f},b_{\epsilon3}\rangle=3-\frac{4}{\sqrt{\kappa}}-\frac{8}{\kappa}<-1&\text{ for }1<\kappa\le 2\quad\text{ and}\\
\langle b_t,b_{\epsilon3}' \rangle=&\langle \sigma_{b_c^*}(b_{t}),\sigma_{b_c^*}(b_{\epsilon3}')\rangle=\langle  b_{t},b_{\epsilon3}\rangle=\frac{2+2\sqrt{\kappa}-\kappa}{\sqrt{4+\kappa^2}}\geq1&\text{ for }1<\kappa\le 2.
\end{align*}

\end{proof}

\section{The proof of the main theorem.}\label{sec:proof}
\subsection{From links to disk packable graphs.} 

The \textit{medial graph} of a planar graph $G$, denoted $med(G)$, is constructed by placing one vertex on each edge of $G$ and joining two vertices if the corresponding edges are consecutive on a face of $G$. Medial graphs are $4$-regular planar graphs but may have loops or multiple edges, see Figure \ref{fig:graphs} (c). We define the \textit{simplified medial graph} of $G$, denoted $\underline{med}(G)$, the simple planar graph obtained from $med(G)$ by deleting loops and multiple edges. Every link diagram $\LL$ with at least one crossing leads to a $4$-regular planar graph $G_\LL$ where the vertices are the crossings and the edges are the arcs joining the crossings. Again, this graph is not simple in general, see Figure \ref{fig:graphs} (b). We define the \textit{pyramidal patchwork}
of $\LL$ the simple planar graph given by the simultaneous drawing of $G_\LL\cup \underline{med}(G_\LL)$ and we denote this graph $\bigotimes(\LL)=(V_\otimes,E_\otimes)$. The set of vertices can be divided in two sets $V_\otimes=V_\times\cup V_m$ where $V_\times$ is the set of vertices of $G_{\LL}$ and $V_m$ is the set of vertices of $\underline{med}(G_\LL)$. We call the vertices of $V_\times$ the \textit{crossing vertices}.

\begin{figure}[H]
  \centering
  \begin{tikzpicture}
    \node at (0,0) {\includegraphics[width=1\textwidth]{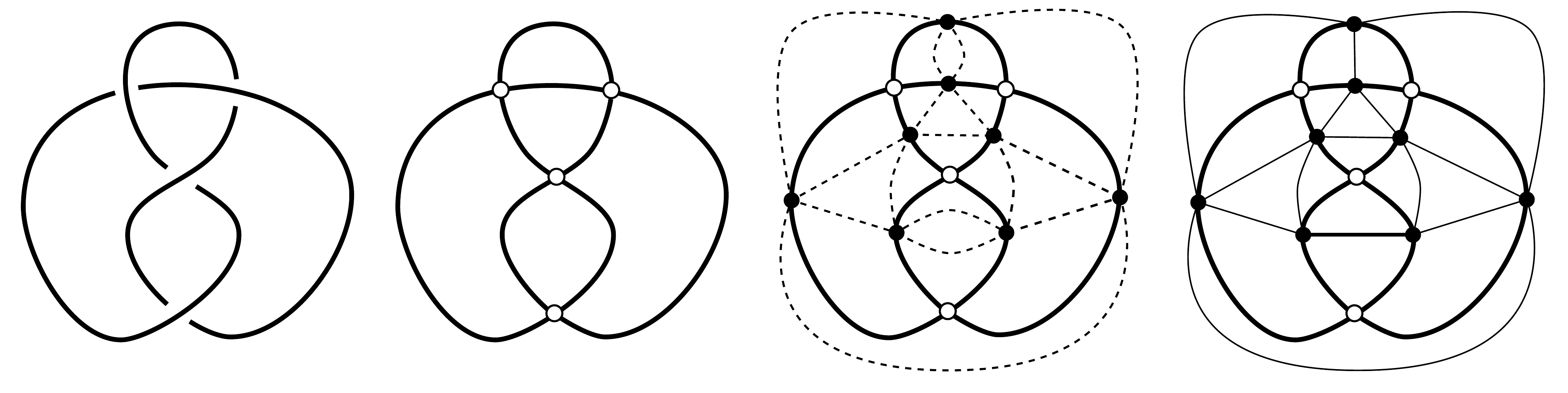}};
    \node at (-6.7,-2.4) {\small (a)};
    \node at (-2.5,-2.4) {\small (b)};
    \node at (1.9,-2.4) {\small (c)};
    \node at (6.4,-2.4) {\small (d)};
    
    \end{tikzpicture}
    \caption{(a) A diagram $\LL$ of the Figure-eight knot; (b) $G_\LL$; (c) $G_\LL$ with its medial in dashed; (d) the pyramidal patchwork of $\LL$.}
  \label{fig:graphs}
\end{figure}

Since $G_\LL$ is $4$-regular, the subgraph of $\bigotimes(\LL)$ induced by a crossing vertex and its 4 neighbours is a pyramidal graph. Therefore, the pyramidal patchwork of $\mathcal{L}$ can be obtained as the union of $n$ pyramidal graphs where $n$ is the number of crossings of $\mathcal{L}$. Moreover, we have
\begin{equation}
|V_\otimes|=|V_\times|+|V_m|\\
=n+\frac{1}{2}(4n)\\
=3n.
\end{equation}

\vspace{2cm}
We now have all the ingredients to proceed with the proof of Theorem \ref{theorem}.
\subsection{Proof of Theorem \ref{theorem}} For trivial links the theorem is trivially true. Let $L$ be a non-trivial link and let $\LL$ be a minimal crossing diagram of $L$. By the KAT theorem, there is a disk packing $\mathcal{D}_{\bigotimes(\LL)}$ with tangency graph $\bigotimes(\LL)=(V_\times\cup V_m,E_\otimes)$. 
Let $\mathcal{B}_{\bigotimes(\LL)}$ be the blowing up of $\mathcal{D}_{\bigotimes(\LL)}$. For every crossing vertex $x\in V_\times$, $\mathcal{D}_{\bigotimes(\LL)}$ 
admits a pyramidal disk system with disk packing $\mathcal{D}_\boxtimes(x)$ and therefore $\mathcal{B}_{\bigotimes(\LL)}$ admits a crossing ball system with pyramidal ball packing $\mathcal{B}_\boxtimes(x)$ and bridge balls $\{b_{\epsilon_x3},b_{\epsilon_x3}'\}$. We notice that
\begin{eqnarray*}
\mathcal{D}_{\bigotimes(\LL)}=\bigcup_{x\in V_\times}\mathcal{D}_\boxtimes(x)&\text{and}&\mathcal{B}_{\bigotimes(\LL)}=\bigcup_{x\in V_\times}\mathcal{B}_\boxtimes(x).
\end{eqnarray*}

We choose $\epsilon_x$ such that the thread of the chain made by the balls $(b_c,b_{\epsilon3},b_{\epsilon3}',b_{-c})$ is over/under the thread of the chain $(b_f,b_x,b_{-f})$ according to the diagram $\LL$. Let $\mathcal{B}_{\bigwedge(\LL)}$ be the collection of all the bridge balls with the appropriate signs with respect to $\mathcal{L}$ for each crossing vertex. Let
 $\mathcal{B}_\LL$ be the ball collection $\mathcal{B}_{\bigotimes(\LL)}\cup\mathcal{B}_{\bigwedge(\LL)}$. If $\mathcal{B}_\LL$ were a packing then its carrier would contain a polygonal link ambient isotopic to $L$ (by construction). Moreover, the number of balls $|\mathcal{B}_\LL|=|\mathcal{B}_{\bigotimes(\LL)}|+|\mathcal{B}_{\bigwedge(\LL)}|=3\mathsf{cr}(\LL)+2\mathsf{cr}(\LL)=5\mathsf{cr}(L)$ since $\mathcal{L}$ is a minimal crossing diagram.\\
 
We need thus to prove that $\mathcal{B}_\LL$ is a packing. To this end, it is enough to show the following three claims:
\begin{enumerate}
\item $\mathcal{B}_{\bigotimes(\mathcal{L})}$ is a packing.
\item Every ball of $\mathcal{B}_{\bigotimes(\mathcal{L})}$ is at most tangent to every ball of $\mathcal{B}_{\bigwedge(\LL)}$. 
\item $\mathcal{B}_{\bigwedge(\mathcal{L})}$ is a packing.
\end{enumerate}

\bigskip
Claim $(1)]$ This is obtained directly by the Remark \ref{rem:blowinv}.\\

Claim $(2)]$ Let $x$ be a crossing vertex with corresponding pyramidal disk system $(\mathcal{D}_\boxtimes,d_1^*,d_2^*,d_t)$, crossing ball system $(\mathcal{B}_\boxtimes,b_1^*,b_2^*,b_t,b_{\epsilon_x3},b_{\epsilon_x3}')$ and crossing region $\mathcal{R}$. Since $\mathcal{D}_{\bigotimes(\mathcal{L})}$ is a packing then, as a consequence of Lemma \ref{lem:relations} \ref{lem:relations;tangencydisk}, any disk $d\in \mathcal{D}_{\bigotimes(\mathcal{L})}\setminus\mathcal{D}_\boxtimes$ must be disjoint to $d_t$.
Therefore, the corresponding ball $b\in \mathcal{B}_{\bigotimes(\mathcal{L})}\setminus\mathcal{B}_\boxtimes$ must be disjoint to $b_t$ and thus, $b$ is disjoint to $\mathcal{R}$ that contains the bridge balls $b_{\epsilon_x3}$ and $b_{\epsilon_x3}'$ by Lemma \ref{region}. Hence, $b_{\epsilon_x3}$ and $b_{\epsilon_x3}'$ 
 are disjoint to every ball of $\mathcal{B}_{\bigotimes(\mathcal{L})}\setminus\mathcal{B}_\boxtimes$. In the other hand, Lemma \ref{region} ensures that $b_{\epsilon_x3}$ and $b_{\epsilon_x3}'$ are at most tangent to the balls of $\mathcal{B}_\boxtimes$.\\

Claim $(3)]$ We first notice that,  by Lemma \ref{region}, the bridge balls of a crossing system are externally tangent. It remains to show that the bridge balls of different crossing systems have disjoint interiors. Since $L$ is non-trivial, $\bigotimes(\mathcal{L})$ has at least two crossing vertices. Let $x$ and $x'$ be two different crossing vertices of $\bigotimes(\mathcal{L})$ and let
$\mathcal{D}_\boxtimes=\{d_x,d_1,d_2,d_{-1},d_{-2}\}$
and $\mathcal{D}_\boxtimes'=\{d_{x'},d_{1'},d_{2'},d_{-1'},d_{-2'}\}$ be the corresponding pyramidal disk packings in $\mathcal{D}_{\bigotimes(\LL)}$. Let $n$ be the number of disks in common of $\mathcal{D}_\boxtimes$ and $\mathcal{D}_\boxtimes'$ and let $\mathcal{R}$ and $\mathcal{R}'$ be their respective crossing regions. We end the proof by showing that in each of the five cases ($n=0,1,2,3,4$), $\mathcal{R}$ and $\mathcal{R}'$ have disjoint interiors. This implies, by Lemma \ref{region}, that the bridge balls of the crossing ball systems of $x$ and $x'$ are at most tangent.\\

If needed we may relabel $\mathcal{D}_\boxtimes'$ in order to work with the labelling of the graphs showed on the left.\\
\vspace{1cm}

\noindent
$n=0$\hspace{0.7cm}
\begin{minipage}[c]{.25\textwidth}
\begin{tikzpicture}[scale=.45]
\begin{scope}[xshift=-2.5cm]
\draw \SW rectangle \NE;
\draw[ultra thick] (-1.4,-1.4) -- (1.4,1.4);
\draw[ultra thick] (-1.4,1.4) -- (1.4,-1.4);

\draw[fill=black] (0,0) circle (.0cm) node[anchor= south] {\small $x$};
\draw[fill=black] \NE circle (.0cm) node[anchor= west] {$1$};
\draw[fill=black] \SE circle (.0cm) node[anchor= west] {$2$};
\draw[fill=black] \SW circle (.0cm) node[anchor= east] {$-1$};
\draw[fill=black] \NW circle (.0cm) node[anchor= east] {$-2$};

\node at (0,0) [circle,draw=black,fill=white,inner sep=0pt,minimum size=.15cm]  {};
\node at \NE [circle,fill=black,inner sep=0pt,minimum size=.15cm]  {};
\node at \NW [circle,fill=black,inner sep=0pt,minimum size=.15cm]  {};
\node at \SW [circle,fill=black,inner sep=0pt,minimum size=.15cm]  {};
\node at \SE [circle,fill=black,inner sep=0pt,minimum size=.15cm]  {};

\end{scope}

\begin{scope}[xshift=2.5cm]
\draw \SW rectangle \NE;
\draw[ultra thick] (-1.4,-1.4) -- (1.4,1.4);
\draw[ultra thick] (-1.4,1.4) -- (1.4,-1.4);

\draw[fill=black] (0,0) circle (.0cm) node[anchor= south,xshift=1pt] {\small $x'$};
\draw[fill=black] \NE circle (.0cm) node[anchor= west] {$-2'$};
\draw[fill=black] \SE circle (.0cm) node[anchor= west] {$-1'$};
\draw[fill=black] \SW circle (.0cm) node[anchor= east] {$2'$};
\draw[fill=black] \NW circle (.0cm) node[anchor= east] {$1'$};

\node at (0,0) [circle,draw=black,fill=white,inner sep=0pt,minimum size=.15cm]  {};
\node at \NE [circle,fill=black,inner sep=0pt,minimum size=.15cm]  {};
\node at \NW [circle,fill=black,inner sep=0pt,minimum size=.15cm]  {};
\node at \SW [circle,fill=black,inner sep=0pt,minimum size=.15cm]  {};
\node at \SE [circle,fill=black,inner sep=0pt,minimum size=.15cm]  {};

\end{scope}

\end{tikzpicture}

\end{minipage}
\hfill
\begin{minipage}[c]{.6\textwidth}
Since $\mathcal{D}_{\bigotimes(\LL)}$ is a packing then, by Lemma \ref{lem:relations} \ref{lem:relations;tangencydisk}, the boundaries of $d_t$ and $d_{t'}$ do not intersect. Therefore, $d_t$ and $d_{t'}$ are disjoint as well as $b_t$ and $b_{t'}$. Hence, $\mathcal{R}\cap \mathcal{R}'=\emptyset$.
\end{minipage}\\
\vspace{1cm}\\
$n=1$\hspace{0.7cm}
\begin{minipage}[c]{.25\textwidth}
    \begin{tikzpicture}[scale=.45]
\draw (-1,-1) -- (-3,-1) -- (-3,1) -- (3,1) -- (3,-1) -- (1,-1) -- (0,1) -- (-1,-1);
\draw[ultra thick] (-3.4,-1.4) -- (-2,0) -- (0,1) --(2,0)-- (3.4,-1.4);
\draw[ultra thick] (-3.4,1.4) -- (-0.6,-1.4);
\draw[ultra thick] (3.4,1.4) -- (0.6,-1.4);

\draw[fill=black] (-2,0) circle (.0cm) node[anchor= south] {$x$};
\draw[fill=black] (0,1) circle (.0cm) node[anchor= south] {$1=1'$};
\draw[fill=black] (-1,-1) circle (.0cm) node[anchor= west] {$2$};
\draw[fill=black] (-3,1) circle (.0cm) node[anchor= east] {$-1$};
\draw[fill=black] (-3,-1) circle (.0cm) node[anchor= east] {$-2$};

\draw[fill=black] (2,0) circle (.0cm) node[anchor= south] {$x'$};
\draw[fill=black] (1,-1) circle (.0cm) node[anchor= east] {$2'$};
\draw[fill=black] (3,1) circle (.0cm) node[anchor= west] {$-2'$};
\draw[fill=black] (3,-1) circle (.0cm) node[anchor= west] {$-1'$};

\node at (-2,0) [circle,draw=black,fill=white,inner sep=0pt,minimum size=.15cm]  {};
\node at (-3,-1) [circle,fill=black,inner sep=0pt,minimum size=.15cm]  {};
\node at (-3,1) [circle,fill=black,inner sep=0pt,minimum size=.15cm]  {};
\node at (0,1) [circle,fill=black,inner sep=0pt,minimum size=.15cm]  {};
\node at (-1,-1) [circle,fill=black,inner sep=0pt,minimum size=.15cm]  {};
\node at (2,0) [circle,draw=black,fill=white,inner sep=0pt,minimum size=.15cm]  {};
\node at (3,-1) [circle,fill=black,inner sep=0pt,minimum size=.15cm]  {};
\node at (3,1) [circle,fill=black,inner sep=0pt,minimum size=.15cm]  {};
\node at (1,-1) [circle,fill=black,inner sep=0pt,minimum size=.15cm]  {};

\end{tikzpicture}      
\end{minipage}
\hfill
\begin{minipage}[c]{.6\textwidth}
The (possibly empty) region $d_t\cap d_{t'}$ must be contained in $d_1$ so $b_t\cap b_{t'}$ is contained in $b_1$. As a consequence, $\intt(\mathcal{R})\cap \intt(\mathcal{R}')=\emptyset$.
\end{minipage}\\
\vspace{1cm}\\
$n=2$\hspace{0.7cm}
\begin{minipage}[c]{.25\textwidth}
\centering
\begin{tikzpicture}[scale=.45]
\draw (-2,-1) rectangle (2,1);
\draw (0,1) -- (0,-1);
\draw[ultra thick] (-2.4,-1.4) -- (0,1)--(2.4,-1.4);
\draw[ultra thick] (-2.4,1.4) -- (0,-1) -- (2.4,1.4);

\draw[fill=black] (-1,0) circle (.0cm) node[anchor= south] {$x$};
\draw[fill=black] (0,1) circle (.0cm) node[anchor= south] {$1=1'$};
\draw[fill=black] (0,-1) circle (.0cm) node[anchor= north] {$2=2'$};
\draw[fill=black] (-2,1) circle (.0cm) node[anchor= east] {$-1$};
\draw[fill=black] (-2,-1) circle (.0cm) node[anchor= east] {$-2$};

\draw[fill=black] (1,0) circle (.0cm) node[anchor= south] {$x'$};
\draw[fill=black] (2,1) circle (.0cm) node[anchor= west] {$-2'$};
\draw[fill=black] (2,-1) circle (.0cm) node[anchor= west] {$-1'$};

\node at (-1,0) [circle,draw=black,fill=white,inner sep=0pt,minimum size=.15cm]  {};
\node at (-2,-1) [circle,fill=black,inner sep=0pt,minimum size=.15cm]  {};
\node at (-2,1) [circle,fill=black,inner sep=0pt,minimum size=.15cm]  {};
\node at (0,1) [circle,fill=black,inner sep=0pt,minimum size=.15cm]  {};
\node at (0,-1) [circle,fill=black,inner sep=0pt,minimum size=.15cm]  {};
\node at (1,0) [circle,draw=black,fill=white,inner sep=0pt,minimum size=.15cm]  {};
\node at (2,-1) [circle,fill=black,inner sep=0pt,minimum size=.15cm]  {};
\node at (2,1) [circle,fill=black,inner sep=0pt,minimum size=.15cm]  {};

\end{tikzpicture}        
    
\end{minipage}
\hfill
\begin{minipage}[c]{.6\textwidth}
We can apply a standard transformation to get a standard disk packing $[\mathcal{D}_\boxtimes\cup\mathcal{D}_\boxtimes']^1_2$ where the disks $d_1$, $d_2$, $d_t$ and $d_{t'}$ become half-spaces as in Figure \ref{fig:proofn=2}. 
\end{minipage}

\begin{figure}[H]
\centering

\begin{tikzpicture}[scale=.5]
\newcommand\XXmax{4.4}

\newcommand\YY{1.5}
\newcommand\YYmin{-2.5}

\begin{scope}[xshift=-5cm]
\fill[darkgray,opacity=\opac] (0,3.64) circle (1.81cm);
\fill[darkgray,opacity=\opac] (0,-0.26) circle (2.09cm);
\fill[darkgray,opacity=\opac](-4.59,.23) circle (2.53cm);
\fill[darkgray,opacity=\opac] (-2.81,3.3) circle (1.02cm);
\fill[darkgray,opacity=\opac] (-1.8,1.88) circle (.71cm);
\fill[darkgray,opacity=\opac] (-1.74,1.68) circle (1.75cm);
\fill[darkgray,opacity=\opac](4.81,.6) circle (2.8cm);
\fill[darkgray,opacity=\opac] (2.3,2.9) circle (0.6cm);
\fill[darkgray,opacity=\opac] (1.66,1.87) circle (.61cm);
\fill[darkgray,opacity=\opac] (1.52,1.56) circle (1.54cm);

\node at (-3.32,2.42) [circle,fill=black,inner sep=0pt,minimum size=.1cm]  {};
\node at (-1.8,3.42) [circle,fill=black,inner sep=0pt,minimum size=.1cm]  {};
\node at (0,1.83) [circle,fill=black,inner sep=0pt,minimum size=.1cm]  {};
\node at (-2.08,-0.04) [circle,fill=black,inner sep=0pt,minimum size=.1cm]  {};
\node at (-.02,1.4) [circle,fill=black,inner sep=0pt,minimum size=.1cm]  {};
\node at (2.06,0.11) [circle,fill=black,inner sep=0pt,minimum size=.1cm]  {};
\node at (2.75,2.49) [circle,fill=black,inner sep=0pt,minimum size=.1cm]  {};
\node at (1.72,3.09) [circle,fill=black,inner sep=0pt,minimum size=.1cm]  {};

\draw (0,3.64) circle (1.81cm) node[anchor= south, inner sep=2pt]  {$d_1$};
\draw (0,-0.26) circle (2.09cm) node[anchor=north,inner sep=2pt] {$d_{2}$};
\draw (-4.59,.23) circle (2.53cm) node {$d_{-1}$};
\draw (-2.81,3.3) circle (1.02cm) node {$d_{-2}$};
\draw (-1.8,1.88) circle (.71cm) node {$d_{x}$};
\draw[densely dotted] (-1.74,1.68) circle (1.75cm) node at (-.9,.9) { $d_{t}$};
\draw (4.81,.6) circle (2.8cm) node {$d_{-1'}$};
\draw (2.3,2.9) circle (0.6cm) node at (2.4,2.9) {\scriptsize $d_{-2'}$};
\draw (1.66,1.87) circle (.61cm) node {$d_{x'}$};
\draw[densely dotted] (1.52,1.56) circle (1.54cm) node at (1,.9) {$d_{t'}$};
\end{scope}

\begin{scope}[xshift=12cm,yshift=1cm,scale=2.3]
\fill [darkgray,opacity=\opac]  (-2.8,1) rectangle (\XXmax,1.5);
\fill [darkgray,opacity=\opac]  (-2.8,-1) rectangle (\XXmax,-1.5);
\fill [darkgray,opacity=\opac]  (-2.8,1.5) -- (-.63,1.5) -- (0.13,-1.5) -- (-2.8,-1.5); 
\fill [darkgray,opacity=\opac]  (2.12,1.5) -- (\XXmax,1.5) -- (\XXmax,-1.5) -- (1.59,-1.5);

\fill[darkgray,opacity=\opac](0,-.29) circle (.71cm);
\fill[darkgray,opacity=\opac] (-1.69,0) circle (1cm);
\fill[darkgray,opacity=\opac] (-.5,0.65) circle (.35cm);
\fill[darkgray,opacity=\opac] (1.68,-0.36) circle (.64cm);
\fill[darkgray,opacity=\opac] (3.28,0) circle (1cm);
\fill[darkgray,opacity=\opac] (2.03,.61) circle (.39cm);

\node at (-.5,1) [circle,fill=black,inner sep=0pt,minimum size=.1cm] {};
\node at (-.34,.34) [circle,fill=black,inner sep=0pt,minimum size=.1cm] {};
\node at (0,-1) [circle,fill=black,inner sep=0pt,minimum size=.1cm] {};
\node at (1.68,-1) [circle,fill=black,inner sep=0pt,minimum size=.1cm]  {};
\node at (2.03,1) [circle,fill=black,inner sep=0pt,minimum size=.1cm]  {};
\node at (1.9,0.24) [circle,fill=black,inner sep=0pt,minimum size=.1cm]  {};

\draw (-2.8,1) -- (\XXmax,1) node at (-2.5,1.2) {$d_{1}$};
\draw (-2.8,-1) -- (\XXmax,-1) node at (-2.5,-1.2) {$d_{2}$};
\draw (0,-.29) circle (.71cm) node[anchor= west]  {$d_{-1}$};
\draw (-.5,0.65) circle (.35cm) node {\small $d_{-2}$};
\draw (-1.69,0) circle (1cm) node {$d_{x}$};

\draw (1.68,-0.36) circle (.64cm) node[anchor= east,inner sep=-2pt] {$d_{-1'}$};
\draw (2.03,.61) circle (.39cm) node {\small $d_{-2'}$};
\draw (3.28,0) circle (1cm) node {$d_{x'}$};
\draw[densely dotted] (-.63,1.5) -- (0.13,-1.5) node at (-.8,1.3) {$d_{t}$};
\draw[densely dotted] (2.12,1.5) -- (1.59,-1.5) node at (2.4,1.3) {$d_{t'}$};

\end{scope}

\node at (-5,-4) {(a)};
\node at (14,-4) {(b)};
\end{tikzpicture}  
   \caption{(a) $\mathcal{D}_\boxtimes\cup\mathcal{D}_\boxtimes'$ in the case $n=2$ together with their tangency disks;  (b)$[\mathcal{D}_\boxtimes\cup\mathcal{D}_\boxtimes']^1_2$.}
\label{fig:proofn=2}
\end{figure} 

\noindent
The lines $\partial d_t$ and $\partial d_{t'}$ in $[\mathcal{D}_\boxtimes\cup\mathcal{D}_\boxtimes']^1_2$ either intersect in a point lying in $d_1\cup d_2$ or they are parallel implying, in both cases, that the region $d_t\cap d_{t'}$ is contained in $d_1\cup d_2$. Therefore, $b_t\cap b_{t'}$ is contained in $b_1\cup b_2$ and thus $\intt(\mathcal{R})\cap \intt(\mathcal{R}')=\emptyset$.

\newpage
\noindent
$n=3$
\begin{minipage}[c]{.25\textwidth}
\centering
  
\begin{tikzpicture}[scale=.45]
\draw (0,2) --(0,-2);
\draw (0,2) -- (-3,0) -- (0,-2) -- (3,0) -- (0,2);
\draw[ultra thick] (-3.5,0) -- (3.5,0);
\draw[ultra thick] (0,2) -- (1.6,0)--(0,-2) -- (-1.6,0) -- (0,2);

\draw[fill=black] (-1.6,0) circle (.0cm) node[anchor= south] {\scriptsize $x$};
\draw[fill=black] (0,2) circle (.0cm) node[anchor= south] {$1=1'$};
\draw[fill=black] (0,0) circle (.0cm) node[anchor= south] {\scriptsize $2=2'$};
\draw[fill=black] (0,-2) circle (.0cm) node[anchor= north] {$-1=-1'$};
\draw[fill=black] (-3.5,0) circle (.0cm) node[anchor= south] {$-2$};
\draw[fill=black] (1.6,0) circle (.0cm) node[anchor= south,xshift=1pt] {\scriptsize $x'$};
\draw[fill=black] (3.5,0) circle (.0cm) node[anchor= south] {$-2'$};

\node at (0,0) [circle,fill=black,inner sep=0pt,minimum size=.15cm]  {};
\node at (-1.6,0) [circle,draw=black,fill=white,inner sep=0pt,minimum size=.15cm]  {};
\node at (-3,0) [circle,fill=black,inner sep=0pt,minimum size=.15cm]  {};
\node at (0,2) [circle,fill=black,inner sep=0pt,minimum size=.15cm]  {};
\node at (0,-2) [circle,fill=black,inner sep=0pt,minimum size=.15cm]  {};
\node at (1.6,0) [circle,draw=black,fill=white,inner sep=0pt,minimum size=.15cm]  {};
\node at (3,0) [circle,fill=black,inner sep=0pt,minimum size=.15cm]  {};

\end{tikzpicture}       
    
\end{minipage}
\hfill
\begin{minipage}[c]{.7\textwidth}
The boundaries of $d_t$ and $d_{t'}$ intersect at the tangency points of $d_1\cap d_2$ and $d_{-1}\cap d_2$, see Figure \ref{fig:proofn=3}. Therefore $d_t\cap d_t'$ is contained in $d_2$ which implies that   $b_t\cap b_t'$ is contained in $b_2$ and hence $\intt(\mathcal{R})\cap \intt(\mathcal{R}')=\emptyset$.
\end{minipage}

\begin{figure}[H]
\centering

\begin{tikzpicture}[scale=.5]
\newcommand\XXmax{4.4}

\newcommand\YY{1.5}
\newcommand\YYmin{-2.5}

\begin{scope}[xshift=-5cm,scale=0.2]
\fill[darkgray,opacity=\opac] (0,12.06) circle (10.29cm);
\fill[darkgray,opacity=\opac] (0,0) circle (1.76cm);
\fill[darkgray,opacity=\opac](0,-10.25) circle (8.49cm);
\fill[darkgray,opacity=\opac] (-24.54,-1.29) circle (17.63cm);
\fill[darkgray,opacity=\opac] (-4.35,-.07) circle (2.59cm);
\fill[darkgray,opacity=\opac] (-7.17,0) circle (7.38cm);
\fill[darkgray,opacity=\opac] (24.54,-1.29) circle (17.63cm);
\fill[darkgray,opacity=\opac] (4.35,-.07) circle (2.59cm);
\fill[darkgray,opacity=\opac] (7.17,0) circle (7.38cm);

\node at (-9.04,7.14) [circle,fill=black,inner sep=0pt,minimum size=.1cm]  {};
\node at (-7.97,-7.34) [circle,fill=black,inner sep=0pt,minimum size=.1cm]  {};
\node at (0,-1.76) [circle,fill=black,inner sep=0pt,minimum size=.1cm]  {};
\node at (0,1.76) [circle,fill=black,inner sep=0pt,minimum size=.1cm]  {};
\node at (7.97,-7.34) [circle,fill=black,inner sep=0pt,minimum size=.1cm]  {};
\node at (9.04,7.14) [circle,fill=black,inner sep=0pt,minimum size=.1cm]  {};

\draw (0,12.06) circle (10.29cm) node {$d_1$};
\draw (0,0) circle (1.76cm) node {\scriptsize $d_{2}$};
\draw (0,-10.25) circle (8.49cm) node {$d_{-1}$};
\draw (-24.54,-1.29) circle (17.63cm) node {$d_{-2}$};
\draw (-4.35,-.07) circle (2.59cm) node {\scriptsize $d_{x}$};
\draw[densely dotted] (-7.17,0) circle (7.38cm) node[anchor=east,inner sep=.3cm] { $d_{t}$};
\draw (24.54,-1.29) circle (17.63cm) node {$d_{-2'}$};
\draw (4.35,-.07) circle (2.59cm) node {\scriptsize $d_{x'}$};
\draw[densely dotted] (7.17,0) circle (7.38cm) node[anchor=west,inner sep=.3cm] { $d_{t'}$};
\end{scope}

\begin{scope}[xshift=12cm,scale=2.3]
\fill [darkgray,opacity=\opac]  (-2.8,1) rectangle (2.8,1.5);
\fill [darkgray,opacity=\opac]  (-2.8,-1) rectangle (2.8,-1.5);
\fill [darkgray,opacity=\opac]  (-2.8,1.5) -- (-.63,1.5) -- (0.13,-1.5) -- (-2.8,-1.5); 
\fill [darkgray,opacity=\opac]  (2.8,1.5) -- (.63,1.5) -- (-0.13,-1.5) -- (2.8,-1.5);

\fill[darkgray,opacity=\opac](0,-.29) circle (.71cm);
\fill[darkgray,opacity=\opac] (-1.69,0) circle (1cm);
\fill[darkgray,opacity=\opac] (-.5,0.65) circle (.35cm);
\fill[darkgray,opacity=\opac] (1.69,0) circle (1cm);
\fill[darkgray,opacity=\opac] (.5,0.65) circle (.35cm);

\node at (-.5,1) [circle,fill=black,inner sep=0pt,minimum size=.1cm] {};
\node at (-.34,.34) [circle,fill=black,inner sep=0pt,minimum size=.1cm] {};
\node at (0,-1) [circle,fill=black,inner sep=0pt,minimum size=.1cm] {};

\node at (.5,1) [circle,fill=black,inner sep=0pt,minimum size=.1cm]  {};
\node at (.34,.34) [circle,fill=black,inner sep=0pt,minimum size=.1cm]  {};

\draw (-2.8,1) -- (2.8,1) node at (-2.5,1.2) {$d_{1}$};
\draw (-2.8,-1) -- (2.8,-1) node at (-2.5,-1.2) {$d_{2}$};
\draw (0,-.29) circle (.71cm) node[anchor= south]  {$d_{-1}$};
\draw (-.5,0.65) circle (.35cm) node {\small $d_{-2}$};
\draw (-1.69,0) circle (1cm) node {$d_{x}$};

\draw (.5,0.65) circle (.35cm) node {\small $d_{-2'}$};
\draw (1.69,0) circle (1cm) node {$d_{x'}$};
\draw[densely dotted] (-.63,1.5) -- (0.13,-1.5) node at (-.8,1.3) {$d_{t}$};
\draw[densely dotted] (.63,1.5) -- (-.13,-1.5) node at (.8,1.3) {$d_{t'}$};

\end{scope}

\node at (-5,-5) {(a)};
\node at (12,-5) {(b)};
\end{tikzpicture}  
   \caption{(a) $\mathcal{D}_\boxtimes\cup\mathcal{D}_\boxtimes'$ in the case $n=3$ together with their tangency disks;  (b)$[\mathcal{D}_\boxtimes\cup\mathcal{D}_\boxtimes']^1_2$.}
\label{fig:proofn=3}
\end{figure} 

\vspace{1cm}

\noindent
$n=4$
\begin{minipage}[c]{.25\textwidth}
\centering
\begin{tikzpicture}[scale=.45]
\draw (0,3) --(0,-3);
\draw (0,3) ..controls (4,2) and (4,-2) .. (0,-3);
\draw[ultra thick] (0,.7) -- (1.6,0)--(0,-.7) -- (-1.6,0) -- (0,.7);
\draw[ultra thick] (0,3) -- (1.6,0)--(0,-3) -- (-1.6,0) -- (0,3);

\draw[fill=black] (-1.6,0) circle (.0cm) node[anchor= east] {$x$};
\draw[fill=black] (0,.7) circle (.0cm) node[anchor= south,xshift=1pt,yshift=-1pt] {\scriptsize $1=1'$};
\draw[fill=black] (0,-.7) circle (.0cm) node[anchor= north,xshift=1pt,yshift=2pt] {\scriptsize $2=2'$};
\draw[fill=black] (0,-3) circle (.0cm) node[anchor= north,xshift=1pt,yshift=1pt] {$-1=-1'$};
\draw[fill=black] (0,3) circle (.0cm) node[anchor= south] {$-2=-2'$};
\draw[fill=black] (1.6,0) circle (.0cm) node[anchor= west] {$x'$};

\node at (0,.7) [circle,fill=black,inner sep=0pt,minimum size=.15cm]  {};
\node at (0,-.7) [circle,fill=black,inner sep=0pt,minimum size=.15cm]  {};
\node at (-1.6,0) [circle,draw=black,fill=white,inner sep=0pt,minimum size=.15cm]  {};
\node at (0,3) [circle,fill=black,inner sep=0pt,minimum size=.15cm]  {};
\node at (0,-3) [circle,fill=black,inner sep=0pt,minimum size=.15cm]  {};
\node at (1.6,0) [circle,draw=black,fill=white,inner sep=0pt,minimum size=.15cm]  {};
\end{tikzpicture}           
    
\end{minipage}
\begin{minipage}[r]{.7\textwidth}
In this case, the tangency graph of $\mathcal{D}_\boxtimes\cup\mathcal{D}_\boxtimes'$ is isomorphic to the octahedral graph by taking $x'=3$ and $x=-3$, see Figure \ref{fig:proofn=4}. We have that $d_t=-d_{t'}$ which implies that $b_t$ and $b_{t'}$ are externally tangent. Thus,  $\intt(\mathcal{R})\cap \intt(\mathcal{R}')=\emptyset$.

\begin{flushright}$\square$\end{flushright}
\end{minipage}

\begin{figure}[H]
\centering

\begin{tikzpicture}[scale=.5]
\newcommand\XXmax{4.4}

\newcommand\YY{1.5}
\newcommand\YYmin{-2.5}

\begin{scope}[xshift=-2cm,scale=2.7]
\clip (-1.6,-1.6) rectangle  (1.6,1.6);

\fill[darkgray,opacity=\opac] (-1,1.732) circle (1.732cm);
\fill[darkgray,opacity=\opac] (-1,-1.732) circle (1.732cm);
\fill[darkgray,opacity=\opac] (0.101,-0.175) circle (.175cm);
\fill[darkgray,opacity=\opac] (0.101, 0.175) circle (.175cm);
\fill[darkgray,opacity=\opac] (-0.202,0) circle (.175cm) ;
\fill[darkgray,opacity=\opac] (-4,-4) rectangle (4,4);
\fill[darkgray,opacity=\opac] (2,0) circle (1.732cm);

\node at (-1,0) [circle,fill=black,inner sep=0pt,minimum size=.1cm]  {};
\node at (.101,0) [circle,fill=black,inner sep=0pt,minimum size=.1cm]  {};
\node at (0., 0.317837) [circle,fill=black,inner sep=0pt,minimum size=.1cm]  {};
\node at (0.,-0.317837) [circle,fill=black,inner sep=0pt,minimum size=.1cm]  {};

\draw (-1,1.732) circle (1.732cm) node[anchor=north,inner sep=0.5cm] {$d_1$};
\draw (-1,-1.732) circle (1.732cm) node[anchor=south,inner sep=0.5cm] { $d_{2}$};
\draw (0.101,-0.175) circle (.175cm) node {\tiny $d_{-1}$};
\draw (0.101, 0.175) circle (.175cm) node {\tiny $d_{-2}$};
\draw (-0.202,0) circle (.175cm) node {\small $d_{x}$};
\draw[densely dotted] (-.45,0) circle (.55cm) node at (-.3,.4) {\small $d_{t}$};
\node at (-.05,.6) {\small $d_{t'}$};
\draw (2,0) circle (1.732cm) node[anchor=east,inner sep=0.7cm] { $d_{x'}$};

\end{scope}

\begin{scope}[xshift=12cm,scale=2.3]
\fill [darkgray,opacity=\opac]  (-2.6,1) rectangle (2.6,1.5);
\fill [darkgray,opacity=\opac]  (-2.6,-1.5) rectangle (2.6,-1);
\fill [darkgray,opacity=\opac]  (-2.6,-1.5) rectangle (2.6,1.5);

\fill[darkgray,opacity=\opac](0,-.5) circle (.5cm);
\fill[darkgray,opacity=\opac] (-1.414,0) circle (1cm);
\fill[darkgray,opacity=\opac] (0,.5) circle (.5cm);
\fill[darkgray,opacity=\opac] (1.414,0) circle (1cm);

\node at (0,1) [circle,fill=black,inner sep=0pt,minimum size=.1cm] {};
\node at (0,0) [circle,fill=black,inner sep=0pt,minimum size=.1cm] {};
\node at (0,-1) [circle,fill=black,inner sep=0pt,minimum size=.1cm] {};

\draw (-2.6,1) -- (2.6,1) node at (-2.3,1.2) {$d_{1}$};
\draw (-2.6,-1) -- (2.6,-1) node at (-2.3,-1.2) {$d_{2}$};
\draw (0,-.5) circle (.5cm) node[anchor=east,inner sep=0pt]  {\scriptsize $d_{-1}$};
\draw (0,0.5) circle (.5cm) node[anchor=east,inner sep=0pt] {\scriptsize $d_{-2}$};
\draw (-1.414,0) circle (1cm) node {$d_{x}$};

\draw (1.414,0) circle (1cm) node {$d_{x'}$};
\draw[densely dotted] (0,1.5) -- (0,-1.5) node at (-.2,1.3) {$d_{t}$};
\node at (.25,1.3) {$d_{t'}$};

\end{scope}

\node at (-2,-5) {(a)};
\node at (12,-5) {(b)};
\end{tikzpicture}  
   \caption{(a) $\mathcal{D}_\boxtimes\cup\mathcal{D}_\boxtimes'$ in the case $n=4$ together with their tangency disks;  (b)$[\mathcal{D}_\boxtimes\cup\mathcal{D}_\boxtimes']^1_2$.}
   
\label{fig:proofn=4}
\end{figure} 

\vspace{1cm}

We notice that our method requires two balls for each bridge in order to connect the closest pair. Unfortunately, this cannot be done with a single ball since it would be too large to be contained in the crossing region, a central request in our proof.\\

\newpage
\section{The necklace algorithm}\label{sec:alg}
In this section we present an algorithm arisen from the constructive proof of our main result. The necklaces representations of the Figures \ref{fig:neck731} and \ref{fig:neck817} are based on this algorithm.\\

The balls are given in inversive coordinates. Instead of computing the bridge balls by using the basis with the mirror ball $b_c^*$ (as done in the proof of Lemma \ref{region}), we use the basis $\mathcal{B}=\{b_x,b_c,b_f,b_{-c},b_{\epsilon z}\}$ which avoids the computation of $b_c^*$. To this end, we need the inversive products $\lambda_{-c,3}:=\langle b_{-c}, b_{\epsilon3}\rangle=\langle b_{c},b_{\epsilon3}'\rangle$ and  $\lambda_{z,3}:=\langle b_{\epsilon z}, b_{\epsilon3}\rangle=\langle b_{\epsilon z}, b_{\epsilon3}'\rangle$. These values are given in the proof of Lemma \ref{region} in terms of the smaller standard curvature by $\lambda_{-c,3}=-1-2\sqrt{\kappa}$ and $\lambda_{z,3}=\sqrt{3+2\sqrt{\kappa}-\kappa}$. The smaller standard curvature can be computed by using Lemma \ref{lem:relations} \ref{lem:relations;ltok} obtaining $\kappa=\frac{1-\lambda_{c,-c}}{2}=\frac{8}{1-\lambda_{f,-f}}$ where $\lambda_{c,-c}:=\langle b_c,b_{-c}\rangle$ and  $\lambda_{f,-f}:=\langle b_f,b_{-f}\rangle$.
In order to obtain a disk packing from the tangency graph we use the well-known algorithm of Collins and Stephenson given in \cite{collinsstephenson} where the radius of the outer disks and the visual precision can be chosen. In all our examples we set the outer radii to be equal to $1$ and precision $10^{-4}$. \\

\begin{table}[h]
\caption{The necklace algorithm.}
\hrule
\vspace{0.1cm}
\begin{itemize}
      
\item[] \textbf{Input:} A link diagram $\LL$ with $n$ crossings of a link $L$.
\item[] \textbf{Output:} A necklace representation $\mathcal{B}_\LL$ of the link $L$ with $5n$ balls.
\item[] \textbf{Algorithm:}
\begin{enumerate}[label*=\arabic*.,itemsep=5pt]
\item Construct the pyramidal patchwork $\bigotimes(\mathcal{L})=(V_\times\cup V_\bigcirc,E_\otimes)$
\item Construct a disk packing $\mathcal{D}_{\bigotimes(\mathcal{L})}$ of tangency graph $\bigotimes(\LL)$
\item Construct a ball packing $\mathcal{B}_{\bigotimes(\mathcal{L})}$ obtained by blowing-up $\mathcal{D}_{\bigotimes(\mathcal{L})}$
\item Set $\mathcal{B}_{\bigwedge(\LL)}=\{\}$, $Q=\mathsf{diag}(1,1,1,1,-1)$, 
$b_z=\begin{pmatrix}
0&0&1&0&0
\end{pmatrix}^T$
\item \textbf{For} $x\in V_\times$ \textbf{do:}
\begin{enumerate}[label=(\alph*),itemsep=3pt]
\item Give to $\mathcal{B}_\boxtimes(x)$ a pyramid labeling $\mathcal{B}_\boxtimes(x)=\{b_x,b_1,b_2,b_{-1},b_{-2}\}$
\item Compute the inversive product $\lambda= b_1^T Q b_{-1}$
\item \textbf{If} $\lambda\ge -3$ \textbf{then:}
\begin{enumerate}
\item[i.] $B=(b_x|b_1|b_2|b_{-1}|b_{z})$, $\kappa= \frac{1-\lambda}{2}$
\end{enumerate}
\item \textbf{else:}
\begin{enumerate}
\item[i.] $B=(b_x|b_2|b_1|b_{-2}|b_{z})$, $\kappa= \frac{8}{1-\lambda}$
\end{enumerate}
\item $\lambda_{-c,3}=-1-2\sqrt{\kappa}$,
 $\lambda_{z,3}=\sqrt{3+2\sqrt{\kappa}-\kappa}$
\item $b_3(x)= 
(\begin{pmatrix}
-1&-1&-1&\lambda_{-c,3}&\lambda_{z,3}
\end{pmatrix}B^{-1}Q)^T$
\item $b_3'(x)=
(\begin{pmatrix}
-1&\lambda_{-c,3}-1&-1&\lambda_{z,3}
\end{pmatrix}B^{-1}Q)^T$
\item \textbf{If} the thread made by the bridge balls is under-crossing in $\mathcal{L}$ \textbf{then:}
\begin{enumerate}
\item[i.] $b_3(x)\leftarrow\mathsf{diag}\begin{pmatrix}
1&1&-1&1&1
\end{pmatrix}b_3(x)$
\item[ii.] $b_3'(x)\leftarrow\mathsf{diag}\begin{pmatrix}
1&1&-1&1&1
\end{pmatrix}  b_3'(x)
$ 
\end{enumerate}
\item $\mathcal{B}_{\bigwedge(\LL)}\leftarrow\mathcal{B}_{\bigwedge(\LL)}\cup \{b_3(x),b_3'(x)\}$
\end{enumerate}
\item $\mathcal{B}_\LL=\mathcal{B}_{\bigotimes(\LL)}\cup \mathcal{B}_{\bigwedge(\LL)}$
\end{enumerate}
\end{itemize}
\hrule
\end{table}

We believe that this algorithm can be useful to investigate 3D invariants of links such as the \textit{rope length} or the \textit{stick number}. We end with the following.
\begin{cor} Let $K$ be a non-trivial knot. Then, there is a non-planar $3$-ball packable graph on $5\mathsf{cr}(K)$ vertices admitting a piece-wise linear embedding in $\mathbb{R}^3$ and containing a Hamiltonian cycle  ambient isotopic to $K$.
\end{cor}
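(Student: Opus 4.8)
The proof amounts to reading the four properties off the packing $\mathcal{B}_\LL$ constructed in the proof of Theorem~\ref{theorem}. The plan is to apply that construction to a minimal crossing diagram $\LL$ of $K$ and to let $G$ be the tangency graph of the resulting ball packing $\mathcal{B}_\LL$. Then $G$ is $3$-ball packable by definition, and $|V(G)|=|\mathcal{B}_\LL|=5\,\mathsf{cr}(K)$, exactly as counted there. The carrier of $\mathcal{B}_\LL$ provides the required piece-wise linear embedding of $G$ in $\rth$: since $\mathcal{B}_\LL$ is a packing its balls are non-overlapping, distinct tangent pairs touch at distinct points, and the straight segments joining centers of tangent balls meet only at common balls, so the carrier is an embedded straight-line realization of $G$.

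For the Hamiltonian cycle I would use the thread of the necklace. Since $K$ is a knot, $\mathcal{B}_\LL$ is a single closed chain, and I claim its thread runs through every ball of $\mathcal{B}_\LL$ exactly once. Indeed, running along $K$ in the diagram $\LL$ one alternately traverses arcs and crossings; in $\mathcal{B}_\LL$ each arc contributes its medial ball, and at each crossing $x$ the chain $(b_f,b_x,b_{-f})$ is one of the two strands, contributing the crossing ball $b_x$, while the chain $(b_c,b_{\epsilon_x3},b_{\epsilon_x3}',b_{-c})$ is the other strand, contributing the two bridge balls. Hence traversing all of $K$ uses each of the $2\,\mathsf{cr}(K)$ medial balls, each of the $\mathsf{cr}(K)$ crossing balls and each of the $2\,\mathsf{cr}(K)$ bridge balls precisely once; as consecutive balls of the thread are externally tangent, the thread is a cycle of $G$ through all $5\,\mathsf{cr}(K)$ vertices, that is, a Hamiltonian cycle, and it is ambient isotopic to $K$ because $\mathcal{B}_\LL$ is, by construction, a necklace representation of $K$.

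The only step requiring real work --- and the main obstacle --- is to show that $G$ is non-planar. I would do this by exhibiting a subdivision of $K_{3,3}$ already inside the crossing ball system of a single crossing vertex $x$; since $K$ is non-trivial, $\LL$ has at least one crossing, so such an $x$ exists. Write $\{b_c,b_{-c}\}$ and $\{b_f,b_{-f}\}$ for the closest and farthest disjoint pairs of $\mathcal{B}_\boxtimes(x)$, and $b_{\epsilon3},b_{\epsilon3}'$ for the bridge balls. In the pyramidal graph $\boxtimes$ the central ball $b_x$ is adjacent to each of $b_c,b_{-c},b_f,b_{-f}$, and each ball of one diagonal pair is adjacent to both balls of the other diagonal pair; furthermore, by the definition of the crossing ball system together with Lemma~\ref{region}, $b_{\epsilon3}$ is externally tangent to $b_c$, $b_f$ and $b_x$, the ball $b_{\epsilon3}'$ is externally tangent to $b_{-c}$, $b_f$ and $b_x$, and $b_{\epsilon3}$ is externally tangent to $b_{\epsilon3}'$. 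Taking branch vertices $b_c,b_{-c},b_x$ on one side and $b_f,b_{-f},b_{\epsilon3}$ on the other, eight of the nine bipartite connections are edges of $G$, and the ninth, from $b_{-c}$ to $b_{\epsilon3}$, is supplied by the length-two path through $b_{\epsilon3}'$, whose interior vertex is not a branch vertex and whose two edges are used nowhere else. Thus $G$ contains a subdivision of $K_{3,3}$, hence $G$ is non-planar by Kuratowski's theorem, which completes the proof.
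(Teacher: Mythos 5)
Your proposal is correct and follows essentially the same route as the paper: take the packing $\mathcal{B}_K$ from the proof of Theorem \ref{theorem}, read off the vertex count, use the carrier for the piece-wise linear embedding and the thread for the Hamiltonian cycle, and derive non-planarity from a Kuratowski obstruction localized at a single crossing ball system. The only (minor) divergence is the witness for non-planarity: the paper extracts a $K_5$ minor from the subgraph induced by a crossing vertex and its six neighbours, whereas you exhibit an explicit $K_{3,3}$ subdivision with branch vertices $b_c,b_{-c},b_x$ and $b_f,b_{-f},b_{\epsilon3}$; your adjacency claims all check against Lemma \ref{region} and the definition of the bridge balls, so both witnesses are valid and yours is, if anything, spelled out in more detail.
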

\begin{proof} 
By applying the Necklace algorithm to a minimal crossing diagram of $K$ we obtain a ball packing $\mathcal{B}_K$. Let $H$ be its tangency graph. By construction, $|V(H)|=5\mathsf{cr}(K)$. Since $K$ is non-trivial $H$ has at least one crossing vertex $x$. Let $H_x$ be the subgraph of $H$ induced by $x$ and its $6$ neighbors. It is easy to see that $H_x$ contains the complete graph of $5$ vertices as a minor and thus, by Kuratowski theorem, $H$ is non planar. Furthermore, the carrier of $\mathcal{B}_K$ is a piece-wise linear embedding of $H$ in $\mathbb{R}^3$. The cycle corresponding to the thread of the necklace representation of $K$ in $\mathcal{B}_K$ is a Hamiltonian cycle of $H$.

\end{proof}
\newpage

\begin{minipage}[l]{.35\textwidth}
\hspace{1.2cm}
{\centering
\includegraphics[width=0.45\textwidth]{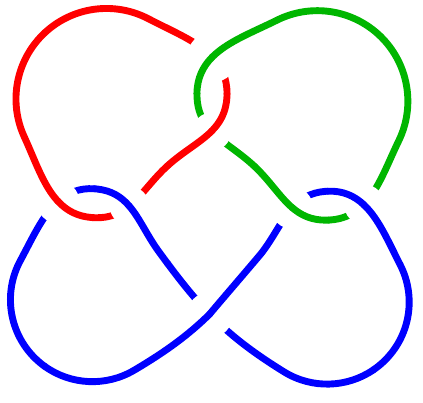}
}
\vspace{-.3cm}

{\tiny
\begin{tabular}[t]{|c|ccc|c|}
\multicolumn{5}{c}{Link $7_{1}^3$ } \\
\hline
Ball& $x$& $y$& $z$& $r$\\
\hline

  1 & 0. & 0. & 0. & 1. \\
 2 & 0.4068 & 1. & -0.1882 & 0.0958 \\
 3 & 0.519 & 1.083 & -0.1184 & 0.0603 \\
 4 & 0.6344 & 1.0947 & 0. & 0.1054 \\
 5 & 0.7338 & 1.0234 & 0.0655 & 0.0334 \\
 6 & 0.7762 & 0.9686 & 0.071 & 0.0362 \\
 7 & 0.8407 & 0.7983 & 0. & 0.1593 \\
 8 & 1. & 0.5458 & 0. & 0.1392 \\
 9 & 2. & 0. & 0. & 1. \\
 10 & 1.4814 & 0.9131 & 0.1095 & 0.0558 \\
 11 & 1.4327 & 0.9925 & 0.0815 & 0.0415 \\
 12 & 1.3656 & 1.0947 & 0. & 0.1054 \\
 13 & 1.3204 & 0.9813 & -0.0604 & 0.0307 \\
 14 & 1.2858 & 0.9279 & -0.065 & 0.0331 \\
 15 & 1.1593 & 0.7983 & 0. & 0.1593 \\
 16 & 1.1253 & 0.6243 & 0.1733 & 0.0886 \\
 17 & 1. & 0.4638 & 0.2589 & 0.1323 \\
 18 & 2. & 2. & 0. & 1. \\
 19 & 1.0949 & 1.4501 & 0.1314 & 0.0671 \\
 20 & 1. & 1.3938 & 0.0956 & 0.0489 \\
 21 & 0.8873 & 1.3285 & 0. & 0.1127 \\
 22 & 1. & 1.211 & 0. & 0.0501 \\
 23 & 1.1302 & 1.0863 & 0. & 0.1302 \\
 24 & 1.2695 & 0.9745 & 0. & 0.0485 \\
 25 & 1.3864 & 0.9006 & 0. & 0.0898 \\
 26 & 1.546 & 1. & 0. & 0.0982 \\
 27 & 0. & 2. & 0. & 1. \\
 28 & 1. & 1.5198 & 0. & 0.1093 \\
 29 & 1.1127 & 1.3285 & 0. & 0.1127 \\
 30 & 1.0501 & 1.2136 & 0.0693 & 0.0354 \\
 31 & 1. & 1.162 & 0.0714 & 0.0365 \\
 32 & 0.8698 & 1.0863 & 0. & 0.1302 \\
 33 & 0.7305 & 0.9745 & 0. & 0.0485 \\
 34 & 0.6136 & 0.9006 & 0. & 0.0898 \\
 35 & 0.454 & 1. & 0. & 0.0982 \\
\hline
\end{tabular}
}\\

\end{minipage}
\begin{minipage}[r]{.6\textwidth}
\begin{figure}[H]
\captionsetup{width=1\linewidth}
    \includegraphics[width=1\textwidth]{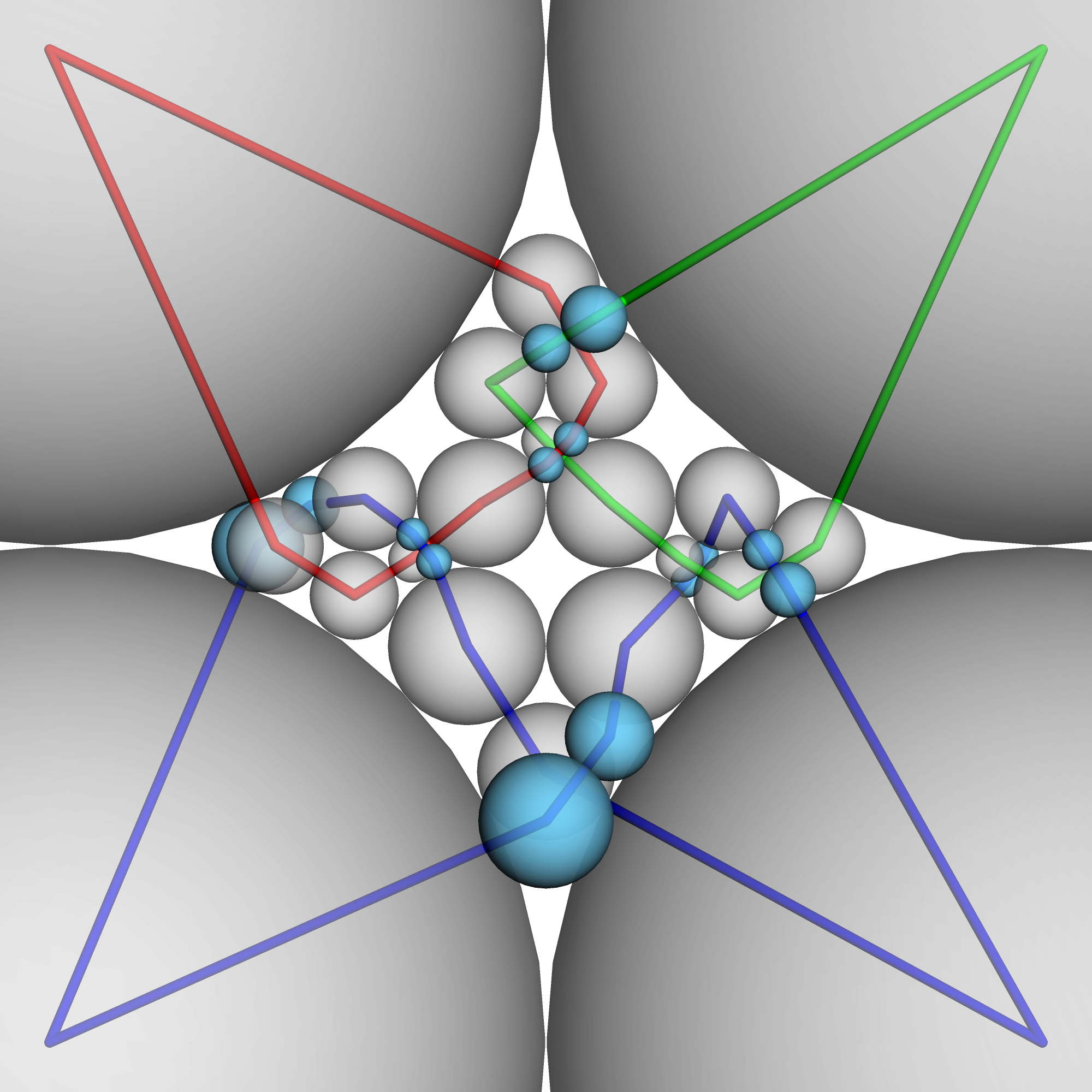}
    \caption{Necklace representation of the link $7^3_1$ with $35$ balls.}
    \label{fig:neck731}
\end{figure}
\end{minipage}
\vfill

\begin{minipage}[l]{.35\textwidth}
\hspace{1cm}
{\centering
\includegraphics[width=0.45\textwidth]{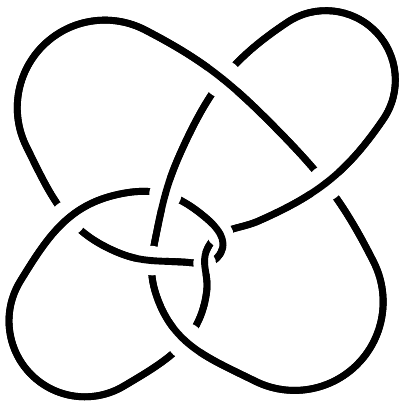}
}
\vspace{-.2cm}

{\tiny
\begin{tabular}[t]{|c|ccc|c|}
\multicolumn{5}{c}{Knot $8_{17}$ } \\
\hline
Ball& $x$& $y$& $z$& $r$\\
\hline
1 & 0. & 0. & 0. & 1. \\
 2 & 0.5168 & 0.958 & 0.2146 & 0.1094 \\
 3 & 0.6549 & 1.0442 & 0.1381 & 0.0704 \\
 4 & 0.7919 & 1.0508 & 0. & 0.1243 \\
 5 & 0.9356 & 1.0528 & -0.0671 & 0.0343 \\
 6 & 0.9982 & 1.0317 & -0.0625 & 0.0319 \\
 7 & 1.081 & 0.9676 & 0. & 0.09 \\
 8 & 1.1502 & 0.8904 & 0.0455 & 0.0232 \\
 9 & 1.164 & 0.8497 & 0.0395 & 0.0202 \\
 10 & 1.1595 & 0.7925 & 0. & 0.0494 \\
 11 & 1.0879 & 0.7759 & 0. & 0.0241 \\
 12 & 1.0007 & 0.7799 & 0. & 0.0633 \\
 13 & 0.902 & 0.7895 & 0. & 0.0358 \\
 14 & 0.7559 & 0.8166 & 0. & 0.1127 \\
 15 & 0.5757 & 0.9527 & 0. & 0.1132 \\
 16 & 0.1772 & 1.9921 & 0. & 1. \\
 17 & 1.1772 & 1.4541 & 0. & 0.1356 \\
 18 & 1.3308 & 1.212 & 0. & 0.1511 \\
 19 & 1.4928 & 1.1678 & -0.1645 & 0.084 \\
 20 & 1.6347 & 1.0364 & -0.2477 & 0.1265 \\
 21 & 2. & 0. & 0. & 1. \\
 22 & 1. & 0.4186 & 0.2005 & 0.1025 \\
 23 & 0.9104 & 0.5354 & 0.1241 & 0.0634 \\
 24 & 0.895 & 0.6486 & 0. & 0.1053 \\
 25 & 0.876 & 0.7684 & -0.0535 & 0.0273 \\
 26 & 0.8848 & 0.8205 & -0.0502 & 0.0256 \\
 27 & 0.9271 & 0.9005 & 0. & 0.0779 \\
 28 & 0.9571 & 1.0187 & 0. & 0.044 \\
 29 & 1.0243 & 1.2064 & 0. & 0.1554 \\
 30 & 1.0559 & 1.3765 & -0.1686 & 0.0861 \\
 31 & 1.1772 & 1.533 & -0.2525 & 0.1289 \\
 32 & 2.1772 & 1.9921 & 0. & 1. \\
 33 & 1.5588 & 1.0432 & 0. & 0.1327 \\
 34 & 1.3111 & 0.9157 & 0. & 0.1459 \\
 35 & 1.1441 & 0.8682 & 0. & 0.0278 \\
 36 & 1.0843 & 0.8388 & 0. & 0.0389 \\
 37 & 1.0718 & 0.7954 & 0.0305 & 0.0156 \\
 38 & 1.0717 & 0.7615 & 0.0372 & 0.019 \\
 39 & 1.1051 & 0.6481 & 0. & 0.1049 \\
 40 & 1. & 0.4677 & 0. & 0.1039 \\
\hline
\end{tabular}
}\\

\end{minipage}
\begin{minipage}[r]{.6\textwidth}
\begin{figure}[H]
\captionsetup{width=1\linewidth}
    \includegraphics[width=1\textwidth]{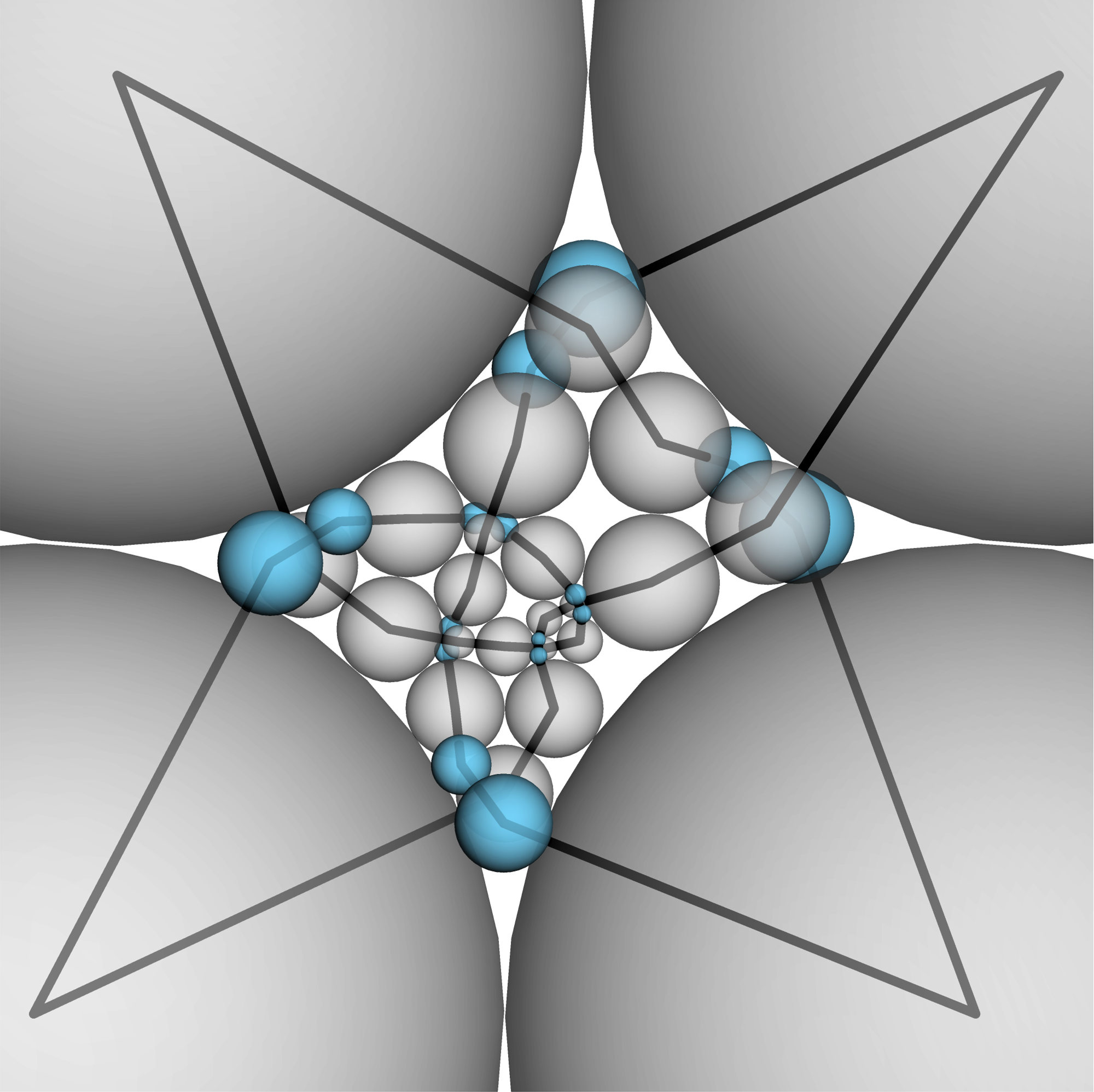}
    \caption{Necklace representation of the knot $8_{17}$ with 40 balls.}
    \label{fig:neck817}
\end{figure}
\end{minipage}

{\bf Acknowledgements}.  We would like to thank the referees for helpful remarks which improved the readability of the paper.

\bibliographystyle{amsplain}

\end{document}